\documentclass[12pt]{amsart}
 
 \usepackage{amssymb}
 \usepackage{colonequals}
 \usepackage{wasysym}
\usepackage{array}
\usepackage{amsthm}
\theoremstyle{plain}
\usepackage{booktabs}
\usepackage{amsmath}
\usepackage{amsxtra}
\usepackage{mathrsfs}
\usepackage[all]{xy}
\usepackage{stmaryrd}
\usepackage{blindtext}
\usepackage{pdflscape}
\usepackage[usenames, dvipsnames]{color}
\usepackage{color}
\usepackage{wrapfig}
\usepackage{tcolorbox}
\usepackage{graphicx}
\usepackage{subfigure}
\usepackage{mathtools}
\usepackage{tikz}
\usepackage{datetime}
\usepackage{manfnt}
\usepackage{bm}
\usepackage[shortlabels]{enumitem}

\makeatletter
\@namedef{subjclassname@2020}{%
  \textup{2020} Mathematics Subject Classification}
\makeatother

\usepackage{algorithm}
\usepackage{algpseudocode}

\usepackage{array}

\baselineskip 10pt


\def\R{{{\mathbb R}}}
\def\Q{{{\mathbb Q}}}
\def\Z{{{\mathbb Z}}}

\def\C{{{\mathbb C}}}

\def\p{{\mathfrak{p}}}
\def\q{{{\mathfrak q}}}

\def\deg{{\text{deg}}}

\def\Hom{{\text{Hom}}}

\def\Cl{{{\operatorname{Cl}}}}
\def\GL{{{\operatorname{GL}}}}
\def\SL{{{\operatorname{SL}}}}

\def\SL{{\text{SL}}}

\newcommand{\nr}{\operatorname{nr}}

\newtheorem{prop}{Proposition}
\numberwithin{prop}{section}

\newtheorem{thm}{Theorem}
\newtheorem{defn}[prop]{Definition}
\newtheorem*{thm*}{Theorem}

\newtheorem{cor}[prop]{Corollary}
\newtheorem{lem}[prop]{Lemma}
\theoremstyle{remark}

\numberwithin{remark}{section}

\theoremstyle{definition}
\newtheorem{example}[prop]{Example}
\usepackage{fullpage}

\begin{document}

\title{Type numbers of locally tiled orders in central simple algebras}

\author[A. Babei]{Angelica Babei}
\address{Pavillon Andr\'{e}-Aisenstadt, Centre de Recherches Math\'{e}matiques\\ Universit\'{e} de Montr\'{e}al\\
Montr\'{e}al, Qu\'{e}bec H3T 1J4\\
Canada}
\email{babeiangelica@gmail.com}

\begin{abstract} Let $A$ be a central simple algebra over a number field $K$ with ring of integers $\mathcal{O}_K$, such that either the degree of the algebra $n \ge 3$, or $n=2$ and $A$ is not  a totally definite quaternion algebra. Then strong approximation holds in $A$, which allows us to describe the genus of an $\mathcal{O}_K$-order $\Gamma \subset A$ in terms of idelic quotients of the field $K$. We consider orders $\Gamma$ that are tiled at every finite place $\nu$ of $K$ and use the Bruhat-Tits building for $\SL_n(K_\nu)$ to give a geometric description for the local normalizers of $\Gamma$. We also give explicit formulas and algorithms to compute the type number of $\Gamma$. Our results generalize work of Vign\'{e}ras \cite{arithmetique} for orders in higher degree central simple algebras.
\end{abstract}

\subjclass[2020]{Primary 11S45; Secondary 11R52}

\keywords{Tiled order, central simple algebra, affine building, type number}

\maketitle

\section{Introduction}

Type numbers of orders in central simple algebras have been investigated in different contexts. The initial interest has been in finding type numbers of maximal and Eichler orders in totally definite quaternion algebras, with fomulas given first  by Deuring \cite{deuring}, and subsequently by Eichler \cite{zurzahlen}, Peters \cite{ternare} and Pizer \cite{type, pizer2}. The rich arithmetic structure of quaternion orders and the type number formulas gave rise to various applications in areas such as   the theory of ternary quadratic forms \cite{shim}, computing  traces of Brandt matrices for classical modular forms \cite{theta}, or computing spaces of Hilbert modular cusp forms \cite{unram}. The case of Eichler orders in  not totally definite quaternion algebras is also of interest, since such orders give rise to Shimura curves. Type numbers of such orders have been computed by Vign\'{e}ras in \cite{arithmetique} using  strong approximation, a tool on which we will also rely on in this article.

Let $A$ be a central simple algebra of degree $n \ge 2$  over a number field $K$ with ring of integers $\mathcal{O}_K$, such that either $ n \ge 3$, or $A$ is not a totally definite quaternion algebra. Then strong approximation holds in $A$. Let $\Gamma$ be an $\mathcal{O}_K$-order in $A$. The \textit{type number} of $\Gamma$ is the number of isomorphism classes of orders that are locally isomorphic to $\Gamma$, which constitutes the \textit{genus} of $\Gamma$. We will denote the type number by   $G(\Gamma)$.

We follow the conventions in \cite{local}, where the authors investigate  maximal orders. In particular, they apply strong approximation and express the arithmetic of the global order in terms of idelic arithmetic over the field $K$ in the following way.
The reduced norm maps $\nr_{A/K}:A \rightarrow K$ and $\nr_{A_\nu/K_\nu}:A_\nu \rightarrow K_\nu$ at all places $\nu$ of $K$ induce norm maps on the ideles $\nr:J_A \rightarrow J_K$, where $\nr((a_\nu)_\nu)=(\nr_{A_\nu/K_\nu}(a_\nu))_\nu$.  Consider a maximal order $\Lambda \subseteq A$, then  $\Lambda_\nu$ is maximal at all finite places. Denote the normalizer of $\Lambda_\nu$ by $\mathcal{N}(\Lambda_\nu)$, and  the restricted product $\prod'_{\nu} \mathcal{N}(\Lambda_\nu):=J_A\cap \prod_{\nu}\mathcal{N}(\Lambda_\nu)$. Then the type number $G(\Lambda)$ is given by the number of double cosets $ A^\times\backslash J_A/\prod'_{\nu} \mathcal{N}(\Lambda_\nu)$. As a consequence of strong approximation,   the reduced  norm induces  a bijection \begin{equation*}
 \nr: A^\times\backslash J_A/\sideset{}{'}\prod_{\nu} \mathcal{N}(\Lambda_\nu) \rightarrow J_K/K^\times \nr(\sideset{}{'}\prod_{\nu} \mathcal{N}(\Lambda_\nu)).
\end{equation*}

In particular, when $\Lambda_\nu$ is maximal in $A_\nu\cong M_{n_\nu}(D_\nu)$ where $D_\nu$ is a division algebra over $K_\nu$,  $\nr(\Lambda_\nu)=(K_\nu^\times)^{n_\nu}\mathcal{O}_\nu^\times$. A natural question would be to ask what kind of groups we can get  when $\Lambda_\nu$ is nonmaximal. In this article, we study a class of orders $\Gamma$ for which $\nr(\mathcal{N}(\Gamma_\nu))=(K_\nu^\times)^{d_\nu}\mathcal{O}_\nu^\times$ with $d_\nu|n_\nu$ and describe this exponent geometrically.

In particular, we investigate type numbers of orders that are \textit{tiled} at all finite places, and we call such orders \textit{everywhere locally tiled}. When it is clear that we work in the global context, we simply call such global orders ``locally tiled".  Background information for (local) tiled orders can be found in Section \ref{prelim}. Tiled orders are of interest to us for a few reasons. First, they are a class more general than maximal and hereditary orders. Second, we can use a combinatorial and geometric framework to investigate their algebraic properties.


To compute the type number $G(\Gamma)$ of an everywhere locally tiled order $\Gamma$, we apply strong approximation and use the same idelic quotient as in the equation above.  In order to describe the local results, we switch to the local notation used in Section \ref{tiled loc}. To avoid confusion, we only use local notation in Section \ref{tiled loc}, and return to global notation in Section \ref{typs}.  In the local setting, when we refer to a tiled order $\Gamma$,  we mean a local order $\Gamma \subseteq M_{r}(D)$, where $D$ is a division algebra over a  non-archimedean local field $k$.  We assume $char(k)=0$, and denote the valuation ring of $k$ by $R$. As in \cite{poly}, we  can associate to $\Gamma$ a convex polytope $C_\Gamma$ in an apartment $\mathcal{A}$ in the building for $\SL_{r}(D)$. Isomorphic orders will have geometrically congruent polytopes, and we can partition such polytopes into equivalence classes. Each class consists of polytopes that can be connected through reflections across hyperplanes in $\mathcal{A}$, which we call \textit{reflection equivalent}. Additionally, this equivalence relation can be represented using algebraic invariants of the tiled order. We get equivalence classes $[\Gamma_0], [\Gamma_1], \dots, [\Gamma_{r-1}]$, and can connect the set of such equivalence classes with $\nr(\mathcal{N}(\Gamma))$.

\begin{thm*}
Let $\Gamma$ be a tiled order with correponding reflection classes $[\Gamma_i]$. Then the following are equivalent:

\begin{enumerate}[(a)]

\item There are $d$ distinct equivalence classes.

\item $d$ is the smallest among $\{1, 2, \dots, r\}$ such that  $[\Gamma_s]=[\Gamma_t]$ whenever $s \equiv t \pmod{d}$.

\item $d$  is the smallest among $\{1, 2, \dots, r\}$ such that  $[\Gamma_0]=[\Gamma_{d}]$.

\item $\nr(\mathcal{N}(\Gamma))=(k^\times)^{d} R^\times$.
\end{enumerate}

\end{thm*}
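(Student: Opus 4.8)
The plan is to isolate the purely combinatorial content, namely the equivalence of (a), (b), and (c), from the arithmetic input that connects these to the reduced norm in (d). Throughout I identify the reflection classes $[\Gamma_i]$ with residues $i \in \Z/r\Z$, writing $\Gamma_i$ for the order whose polytope $C_{\Gamma_i}$ is the image of $C_\Gamma$ under the type-shifting rotation $\tau^i$. Here $\tau$ generates a subgroup $\Omega = \langle\tau\rangle$ of the extended affine Weyl group $\widetilde{W}$ acting on the apartment $\mathcal{A}$, complementary to the reflection subgroup $W_{\mathrm{aff}} \trianglelefteq \widetilde{W}$ generated by the reflections across the walls of $\mathcal{A}$, so that $\widetilde{W} = W_{\mathrm{aff}} \rtimes \Omega$. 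Two normalizations drive everything: elements of $W_{\mathrm{aff}}$ act with trivial type-shift and have reduced norm in $R^\times$, whereas $\tau$ shifts types by one and satisfies $v(\nr(\tau)) = 1$, where $v$ is the valuation on $k$.

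First I would show that the shift $\sigma\colon [\Gamma_i] \mapsto [\Gamma_{i+1}]$ is well defined on reflection classes; this is where normality of $W_{\mathrm{aff}}$ in $\widetilde{W}$ enters, since a relation $w \cdot C_{\Gamma_s} = C_{\Gamma_t}$ with $w \in W_{\mathrm{aff}}$ transports under $\tau$ to $(\tau w \tau^{-1}) \cdot C_{\Gamma_{s+1}} = C_{\Gamma_{t+1}}$ with $\tau w \tau^{-1} \in W_{\mathrm{aff}}$. Granting this, the set $S = \{ m \in \Z/r\Z : [\Gamma_0] = [\Gamma_m]\}$ is a subgroup: it is nonempty, and if $[\Gamma_0] = [\Gamma_a]$ and $[\Gamma_0] = [\Gamma_b]$, then applying $\sigma^a$ to the second relation gives $[\Gamma_a] = [\Gamma_{a+b}]$, whence $[\Gamma_0] = [\Gamma_{a+b}]$. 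Thus $S = d\,\Z/r\Z$ for the unique $d \mid r$ equal to its least positive element, which is exactly the integer in (c). The equivalences among (a), (b), (c) are then formal: since $[\Gamma_i] = [\Gamma_j]$ if and only if $i - j \in S$ (again by applying a power of $\sigma$), the map $i \mapsto [\Gamma_i]$ descends to a bijection $(\Z/r\Z)/S \xrightarrow{\sim} \{\text{classes}\}$, giving exactly $d$ of them, which is (a); statement (b) asserts that every $m \in d\Z/r\Z$ lies in $S$ with $d$ minimal, and (c) is the case $s = d$, $t = 0$ of (b), the reverse implication again coming from the subgroup property.

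The crux is (c) $\Leftrightarrow$ (d). Here I would compute $\nr(\mathcal{N}(\Gamma))$ by passing to the action on the building: for $g \in \mathcal{N}(\Gamma)$ I may assume, after conjugating into the apartment, that $g$ acts as $\tau^m w$ with $w \in W_{\mathrm{aff}}$, and the two normalizations above give $v(\nr(g)) = m$ while the type-shift of $g$ is $m \bmod r$. The condition $g \cdot C_\Gamma = C_\Gamma$ rearranges to $w \cdot C_\Gamma = \tau^{-m} \cdot C_\Gamma = C_{\Gamma_{-m}}$, so such a $g$ exists precisely when $[\Gamma_0] = [\Gamma_{m}]$, that is $m \bmod r \in S$, that is $d \mid m$. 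Conversely, a reflection equivalence $[\Gamma_0] = [\Gamma_d]$ produces $w \in W_{\mathrm{aff}}$ with $w\tau^d \in \mathcal{N}(\Gamma)$ of reduced-norm valuation $d$. Since $\Gamma^\times \subseteq \mathcal{N}(\Gamma)$ already surjects onto $R^\times$ under $\nr$, collecting these facts shows that $\nr(\mathcal{N}(\Gamma))$ is the subgroup of $k^\times$ consisting of elements of valuation in $d\Z$, which is precisely $(k^\times)^{d} R^\times$; reading off the least positive valuation recovers $d$ from (d), giving the reverse implication.

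The main obstacle I anticipate is this last geometric translation: making rigorous that conjugation of $\Gamma$ corresponds to the $\widetilde{W}$-action on $C_\Gamma$, that a normalizing element may be arranged to stabilize the apartment, and above all that the reduced-norm valuation of a building automorphism coincides with its type-shift in the sense made precise by the two normalizations. Once the dictionary between $v \circ \nr$, the type-shift, and the decomposition $\widetilde{W} = W_{\mathrm{aff}} \rtimes \Omega$ is pinned down over the division algebra $D$, and not merely in the split case, the remaining steps are the formal group-theoretic manipulations above.
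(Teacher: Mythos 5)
Your proposal is correct and its mathematical skeleton coincides with the paper's: reflection classes indexed by type-shifts in $\Z/r\Z$, the set of achievable shifts forming a subgroup, and the passage to (d) via the valuation of the reduced norm. The packaging differs in a way worth noting. Where you invoke the decomposition $\widetilde{W}=W_{\mathrm{aff}}\rtimes\Omega$ and prove directly that $S=\{m:[\Gamma_0]=[\Gamma_m]\}$ is a subgroup of $\Z/r\Z$ (which cleanly yields (a)$\Leftrightarrow$(b)$\Leftrightarrow$(c) in one stroke), the paper works with explicit monomial matrices $\xi_s=\mathrm{diag}(\bm{\pi}^s,1,\dots,1)$ and structural invariants, and routes the same content through a chain of lemmas: Lemma \ref{type0} (type $0$ $=$ product of reflections, your normalization of $W_{\mathrm{aff}}$), Proposition \ref{equivalence reflections} (equality of classes $=$ reflection equivalence), Lemma \ref{norm class} (the periodicity that substitutes for your subgroup argument), and Proposition \ref{typesnrd} (the types of normalizing monomial matrices form a subgroup $d\Z/n\Z$, which is your $S$ seen from the normalizer side). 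Your version of the combinatorial half is arguably tidier. The crux you correctly isolate --- that a normalizing element may be taken to stabilize the apartment and act as $\tau^m w$, and that every isomorphism between tiled orders with polytopes in $\mathcal{A}$ is realized by a monomial matrix --- is exactly what the paper imports from Proposition \ref{thmants} (the coset decomposition $\mathcal{N}(\Gamma)=\bigcup_{\xi_\sigma}\xi_\sigma D^\times\Gamma^\times$, together with $\nr(D^\times)=(k^\times)^n$ and $\nr(\Gamma^\times)=R^\times$) and from the isomorphism criterion of \cite{tiledisom} used in Lemma \ref{class description}; so you have not avoided that input, only deferred it, and your two ``normalizations'' of $v\circ\nr$ on $W_{\mathrm{aff}}$ and on $\Omega$ are precisely Lemma \ref{teqn} plus Lemma \ref{type0}. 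One small caution: for a general $g=\xi_\sigma\delta u$ in the normalizer with $\delta\in D^\times$, one only gets $v(\nr(g))\equiv t(\xi_\sigma)\pmod{r}$ rather than equality, since $v(\nr(\delta))\in r\Z$ can be nonzero; this does not affect your conclusion but should be stated mod $r$.
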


This theorem allows us to find the number of reflection classes for any (local) tiled order $\Gamma$ as described in Algorithm \ref{alg1} in Section \ref{tiled loc}. While the general algorithm requires some knowledge about $\mathcal{N}(\Gamma)$, there is one particular case,  when the local algebra $M_{p}(D)$ has $p$  a prime number, which does not require finding the normalizer $\mathcal{N}(\Gamma)$. This particular case is described in Algorithm \ref{alg2} of Section \ref{tiled loc}. 

We return to global notation and compute type numbers in Section \ref{typs} by expressing the idelic cosets in terms of class groups. 

\begin{thm*}
Let $A$ be a central simple algebra of degree $n \ge 2$  over a number field $K$  such that either $ n \ge 3$, or $A$ is not a totally definite quaternion algebra. Let  $\Gamma$ be an everywhere  locally tiled order in $A$.  Let $\Omega$ be the set of real ramified primes in $A$, $\Cl_\Omega(K)$ be the ray class group modulo the real places in $\Omega$,  $S=S_\infty -\Omega$ and $T=\{\p \text{ finite }:\nr(\mathcal{N}(\Gamma_\p))=(K_\p^\times)^{d_\p}R_\p^\times, d_\p  \ne n \}$. For each place $\p \in T$, pick a prime $\q_\p$ such that $ [\q_\p]=[\p^{d_\p}] \text{ in } \Cl_\Omega(K)$ and let $\hat{T}=\{ \q_\p : \p \in T\} \cup S$. Then 
\[
G(\Gamma)=\#\Cl_{\hat{T}, \Omega}(K)/\Cl_{\hat{T}, \Omega}(K)^n,
\] where $\Cl_{\hat{T}, \Omega}(K)=\Cl_\Omega(K)/\langle [\q_\p] : \p\in T\rangle$.
\end{thm*}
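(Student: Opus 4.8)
The plan is to run the idelic computation set up in the introduction and then translate the resulting quotient into ray class group language. By strong approximation and the norm bijection recalled above, we have $G(\Gamma)=\#\bigl(J_K/K^\times H\bigr)$ where $H=\nr\bigl(\sideset{}{'}\prod_\nu \mathcal{N}(\Gamma_\nu)\bigr)$, so the whole problem reduces to computing $H$ place by place and identifying the cokernel. At a finite place $\p$ the local Theorem gives $\nr(\mathcal{N}(\Gamma_\p))=(K_\p^\times)^{d_\p}R_\p^\times$ with $d_\p\mid n_\p\mid n$. At the archimedean places the normalizer component is all of $A_\nu^\times$, so I would record the reduced norm image directly: $\nr(A_\nu^\times)=\C^\times$ at a complex place, $\nr(A_\nu^\times)=\R^\times$ at a real place where $A$ splits (a place of $S$), and $\nr(A_\nu^\times)=\R_{>0}$ at a real ramified place (a place of $\Omega$), since there $A_\nu\cong M_{n/2}(\mathbb{H})$ has positive definite reduced norm. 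These positivity constraints at the places of $\Omega$ are exactly what produces the ray class group modulo $\Omega$.

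Next I would compare $H$ with the standard idelic unit group $U_\Omega$ whose quotient $J_K/K^\times U_\Omega$ is $\Cl_\Omega(K)$: by the previous paragraph $H$ and $U_\Omega$ coincide at every archimedean place, while at each finite place $H$ contains $U_\Omega$ because $(K_\p^\times)^{d_\p}R_\p^\times\supseteq R_\p^\times$. Hence $K^\times H\supseteq K^\times U_\Omega$ and $J_K/K^\times H\cong \Cl_\Omega(K)/\overline{H}$, where $\overline{H}$ is the image of $H$. Since an idele supported at $\p$ with value a uniformizer maps to $[\p]$ while $R_\p^\times$ maps to the identity, the factor $(K_\p^\times)^{d_\p}$ contributes exactly $[\p]^{d_\p}$, giving $\overline{H}=\langle [\p]^{d_\p}:\p\ \text{finite}\rangle$.

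The crux is then the purely group-theoretic identity
\[
\overline{H}=\bigl\langle [\q_\p]:\p\in T\bigr\rangle\cdot \Cl_\Omega(K)^n .
\]
For the inclusion ``$\subseteq$'' I split the generators $[\p]^{d_\p}$ according to whether $\p\in T$: if $\p\in T$ then $[\p]^{d_\p}=[\q_\p]$ by the choice of $\q_\p$, and if $\p\notin T$ then $d_\p=n$, so $[\p]^{d_\p}=[\p]^n\in\Cl_\Omega(K)^n$. For ``$\supseteq$'' I use that every element of $\Cl_\Omega(K)$ is the class of a finite prime, so $\Cl_\Omega(K)^n$ is generated by the $[\p]^n$; for $\p\notin T$ this is $[\p]^{d_\p}\in\overline H$, and for $\p\in T$ one has $[\p]^n=([\p]^{d_\p})^{n/d_\p}\in\overline{H}$ because $d_\p\mid n$. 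Here the divisibility $d_\p\mid n$ coming from the local structure is the key input that makes the $n$-th powers appear.

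Finally I would feed this into the third isomorphism theorem. Writing $N=\langle [\q_\p]:\p\in T\rangle$ so that $\Cl_{\hat T,\Omega}(K)=\Cl_\Omega(K)/N$, the image of $\Cl_\Omega(K)^n$ in $\Cl_{\hat T,\Omega}(K)$ is precisely $\Cl_{\hat T,\Omega}(K)^n$, whence
\[
J_K/K^\times H\cong \Cl_\Omega(K)/\bigl(N\cdot\Cl_\Omega(K)^n\bigr)\cong \Cl_{\hat T,\Omega}(K)/\Cl_{\hat T,\Omega}(K)^n,
\]
and taking cardinalities yields the stated formula. I expect the main obstacle to be the bookkeeping at the archimedean places---correctly pinning down the reduced norm image as $\R_{>0}$ at the ramified real places and checking that the places of $S$ and the complex places contribute nothing beyond what is already built into $\Cl_\Omega(K)$---together with verifying that $\overline H$ is independent of the chosen prime representatives $\q_\p$.
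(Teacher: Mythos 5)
Your proposal is correct and follows essentially the same route as the paper: strong approximation reduces $G(\Gamma)$ to the idelic quotient by $K^\times \nr(\prod'_\nu\mathcal{N}(\Gamma_\nu))$, the local factors $(K_\p^\times)^{d_\p}\mathcal{O}_\p^\times$ contribute the classes $[\p^{d_\p}]=[\q_\p]$ together with $n$-th powers (using $d_\p\mid n$), and the result is identified with $\Cl_{\hat T,\Omega}(K)/\Cl_{\hat T,\Omega}(K)^n$. The only difference is organizational --- you compute the image $\overline{H}=N\cdot\Cl_\Omega(K)^n$ inside $\Cl_\Omega(K)$ and apply the third isomorphism theorem, whereas the paper first forms the intermediate group $G\cong\Cl_{\hat T,\Omega}(K)$ and then passes to $G/G^n$ --- but the key inputs and the resulting computation are the same.
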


Together with the algorithms in Section \ref{tiled loc}, we can use the above theorem to compute type numbers of any everywhere  locally tiled order, and we illustrate it with an example.

Many of the results in this article can be generalized to algebras over general global fields, however there are various cases that require caution. For example, not all such algebras have strong approximation, in which case other sets of tools would be necessary for finding type numbers. Some steps towards a generalization can be found in Brzezinski \cite{gentype}.  On the other hand, we could also look at $\mathcal{O}$-orders in $A$ where $\mathcal{O}$ is an arbitrary order in the number field $K$, but their associated class groups would require extra care.

\section{Preliminaries}
\label{prelim}

\subsection{Class groups and ideles} Let $A$ be a central simple algebra over a number field $K$ such that either the degree of the algebra $n \ge 3$ or $n=2$ and $A$ is not a totally definite quaternion algebra; then strong approximation holds in $A$. Denote the ring of integers of $K$ by  $\mathcal{O}_K$ and the set of places of $K$ by $\text{Pl}(K)$. Let $\Gamma$ be an $\mathcal{O}_K$-order in $A$. We denote by $K_\nu$ and $\mathcal{O}_\nu$ the completions of $K$, and respectively $\mathcal{O}_K$,  at a place $\nu$ of $K$, and let $A_\nu\coloneqq K_\nu \otimes_K A$ and $\Gamma_\nu \coloneqq  \mathcal{O}_\nu \otimes_R \Gamma$.  If $\nu$ is an infinite place, we set $\mathcal{O}_\nu\coloneqq K_\nu$ and  $\Gamma_\nu \coloneqq A_\nu$.

Given a finite set of places $S$ of $K$, we define the set of  $S$-ideles by \[J_{K,S}\coloneqq \prod_{\nu \in S} K_\nu^\times \prod_{\nu \not \in S} \mathcal{O}_\nu^\times. \] 
 We denote  the ideles of $K$ by \[J_K\coloneqq  \mathop{\bigcup_{S \subseteq \text{Pl}(K)}}_{ S \text{ finite}}  J_{K,S} \subseteq \prod_\nu K_\nu^\times.\] We also write $
J_K = \sideset{}{'}\prod_{\nu} K_\nu^\times,$
where $\prod_\nu'$ is the restricted product over the places $\nu$ of $K$.
Similarly, we can define $J_A$ the ideles of $A$. We have reduced norm maps $\nr_{A/K}:A \rightarrow K$ and $\nr_{A_\nu/K_\nu}:A_\nu \rightarrow K_\nu$, which induce $\nr:J_A \rightarrow J_K$ where $\nr((a_\nu)_\nu)=(\nr_{A_\nu/K_\nu}(a_\nu))_\nu$. Denote the normalizer of $\Gamma_\nu$ by $\mathcal{N}(\Gamma_\nu)=\{\xi \in A_\nu^\times \,|\, \xi \Gamma_\nu\xi^{-1}=\Gamma_\nu\}$, and  the restricted product $\prod'_{\nu} \mathcal{N}(\Gamma_\nu)\coloneqq J_A\cap \prod_{\nu}\mathcal{N}(\Gamma_\nu)$.

Consider the set of $\mathcal{O}_K$-orders in $A$ locally isomorphic to $\Gamma$; this is the \textit{genus} of $\Gamma$. By the Skolem-Noether theorem, this set consists of $\mathcal{O}_K$-orders $\Lambda\subset A$ such that  $\Lambda_\nu = \xi_\nu \Gamma_\nu \xi_\nu^{-1}$ for some $ \xi_\nu \in A^\times_\nu$ at all finite places $\nu$ of $K$. Local isomorphisms don't necessarily lift to global isomorphisms, and we wish to investigate the isomorphism classes in the genus of $\Gamma$.  Since by the Skolem-Noether theorem, $\Lambda$ and $\Gamma$ are isomorphic if and only if $\Lambda=\xi \Gamma \xi^{-1}$ by some $\xi \in A^\times$, the isomorphism classes correspond to the double cosets $A^\times\backslash J_A/\prod'_{\nu} \mathcal{N}(\Gamma_\nu)$. We denote the cardinality of the double cosets  by $G(\Gamma)$,  which  is also known as the \textit{type number} of $\Gamma$. Our main goal is to compute $G(\Gamma)$ in the case where each completion $\Gamma_\nu$ is \textit{tiled}. Tiled orders generalize maximal and hereditary orders; we define them in Section \ref{tiled}.

As a  consequence of strong approximation, the reduced norm map  induces a bijection 

\begin{equation}
\label{types}
 \nr: A^\times\backslash J_A/\sideset{}{'}\prod_{\nu} \mathcal{N}(\Lambda_\nu) \rightarrow K^\times \backslash J_K/\nr(\sideset{}{'}\prod_{\nu} \mathcal{N}(\Lambda_\nu)) \cong J_K/K^\times \nr(\sideset{}{'}\prod_{\nu} \mathcal{N}(\Lambda_\nu)),
\end{equation}

\noindent (for more background, see \cite[Corollary 28.4.8]{quatbook} and \cite[Theorem 3.1]{local}), where the codomain has the structure of an abelian group.

 In order to find the number of cosets as in the equation above, we need to connect such idelic cosets with class groups of $K$.
We follow sections I.6, VII.1 and VII.3 in \cite{lang}. Let $\Cl(K)$ be the class group of $K$.  Then $\Cl(K) \cong J_K/K^\times J_{K, S_\infty}$, where $S_\infty$ is  the set of infinite places of $K$. It is well known that the class group of a number field is finite; let $h(K)\coloneqq  \#\Cl(K)$ be the class number of $K$.

We also introduce more general class groups.  Let $\Omega$ be a subset of the real places of $K$, and $S$ a finite set of places of $K$ such that $S_\infty \subseteq S \cup \Omega$ and $S \cap \Omega = \emptyset$. Define $$ J_{K,  S, \Omega} \coloneqq \prod_{\nu \in \Omega} \R_+^\times \prod_{\nu \in S} K_\nu^\times \prod_{\nu \not \in S \cup \Omega} \mathcal{O}^\times_\nu.$$ Note that in our previous notation, $J_{K, S_\infty}= J_{K, S_\infty, \emptyset}$, so we drop the subscript $\Omega$ when $\Omega$ is empty. We define the $(S, \Omega)$-class group of $K$ by \[\Cl_{S, \Omega}(K)\coloneqq J_K/K^\times J_{K, S, \Omega}.\]

In the particular case where $S \cup \Omega = S_\infty$ and therefore $S$ contains no finite places, $\Cl_{S, \Omega}$ is uniquely determined by $\Omega$. For notational convenience, we write in this case $\Cl_\Omega \colonequals \Cl_{S, \Omega}(K)$. The group $\Cl_\Omega(K)$ is also known as the $\Omega$-ray class group of $K$, and can also be realized the following way.  Let $K_\Omega \coloneqq \{a \in K: \nu(a)>0 \,\, \text{for all} \,\, \nu \in \Omega\}$ be the subset of $K$ consisting of elements of $K$ that are positive at all places in $\Omega$. Then  $\Cl_\Omega(K)= I_K/P_\Omega$, where $I_K$ is the set of fractional ideals of $\mathcal{O}_K$ and $P_\Omega=\{ (a): a \in K^\times_\Omega\}$ is the set of principal ideals generated by elements of $K_\Omega$. 

We have a global interpretation for the groups $\Cl_{S, \Omega}(K)$ as well. Let   $T=\{ \p \in S: \p \text{ is  finite}\}$. Note that  $J_{K, S\setminus T, \Omega} \subseteq J_{K,S, \Omega}$, so we have a surjective homomorphism $\Cl_{S \setminus T, \Omega}(K) = \Cl_{\Omega}(K) \twoheadrightarrow \Cl_{S, \Omega}(K)$ with kernel $\prod_{\p \in T} (\dots, 1, K_\p^\times, 1, \dots) K^\times J_{K, S\setminus T, \Omega}$. Since the uniformizers $\pi_\p$ generate each $K_\nu^\times$ and each coset $(\dots, 1, \pi_\p, 1, \dots) K^\times J_{K, S\setminus T, \Omega}$ corresponds to the ideal class $[\p] \in \Cl_\Omega$, we get  $$\Cl_{S, \Omega}(K)\cong \Cl_{\Omega}(K)/\langle [\p] : \p \in T\rangle.$$ Therefore,  each class group $\Cl_{S, \Omega}(K)$ can be realized equivalently either globally as a quotient of the fractional ideals of $\mathcal{O}_K$, or locally as an idelic quotient. We will use both characterizations in our calculations.

\subsection{Central simple algebras over local fields}
\label{local}

We now proceed locally and introduce the corresponding notation, used primarily in Section \ref{tiled loc}. Let $A$ be a central simple algebra over a non-archimedean local field $k$  ($\text{char} \, k =0$) with a valuation $v$ and valuation ring $R$, unique maximal ideal $P$ and uniformizer $\pi$, such that the residue field $\overline{R}\coloneqq R/P$ is finite of size $q$. Following Chapter 5 in \cite{reiner}, let $V$ be a minimal right ideal of $A$, and let $D=\Hom_A(V,V)$. Then we can view $_DV_A$ as a bimodule, where given any left $D$-basis $\{v_1, \dots, v_n\}$ for $V$, we have the action $$(v_1, \dots, v_n)a=(\alpha_{ij})(v_1, \dots, v_n), \quad (\alpha_{ij})\in M_n(D)$$ for any $a \in A$, and from now on we identify $A$ with $M_n(D)$ and $V$ with $D^n$. By the Artin-Wedderburn theorem, $D$ is a central division algebra  over $k$ of some degree $m$, so $\deg(A)=mn$.  Let $\Delta$  be the unique maximal $R$-order in $D$, equipped with a prime element $\bm{\pi}$ such that $\bm{\pi}^m=\pi$.

Motivated by Equation (\ref{types}), we want to investigate the reduced norm map $\nr_{M_n(D)/k}$.  We start with $\nr_{D/k}$, as discussed in \cite[ Section 14]{reiner}. In particular, $D$ contains a maximal subfield $W$ (with valuation ring $R_W$), which is an unramified extension of $k$.  Then  $W=k(\omega)$, where $\omega$ is a $(q^{m}-1)$th root of unity. In addition, $W$ is a splitting field for $D$, so  $ D \otimes_k W \cong M_{m}(W)$ and there is  an embedding (see \cite[p. 18]{reiner}) $\mu: D \hookrightarrow M_{m}(W)$ which induces the norm map $\nr_{D/k}(x)=\det(\mu(x))$. Denoting by $N_{W/k}$ is the regular norm map, this embedding gives
 \[ \nr_{D/k}(\bm{\pi}) =(-1)^{m-1}\pi, \,\, \text{and} \,\,  \nr_{D/k}(\alpha)=N_{W/k}(\alpha)\,\,  \text{for all}\,\,\alpha \in W.\]

To obtain reduced norms for the whole algebra $M_n(D)$ over $k$, we note that the map $\mu$ induces an embedding $\widehat{\mu}: M_{n}(D) \hookrightarrow M_{mn}(W)$ where $(a_{ij}) \in M_{n}(D)$ and $(a_{ij}) \mapsto (\mu(a_{ij}))$.  By  \cite[p. 282]{reiner},
\begin{equation}
\label{defnnrd}
\nr_{M_{n}(D)/k}(x)=\det(\widehat{\mu}(x)).
\end{equation}

In particular, by embedding $D$ on the diagonal in $M_{n}(D)$, we get 
\begin{equation}
\label{nrD} \nr_{M_{n}(D)/k}(x)=(\nr_{D/k}(x))^{n} \qquad \text{for all} \quad x \in D
\end{equation}

A couple of notes on  $\nr_{D/k}(\Delta)$ and $\nr_{M_n(D)/k}(M_n(\Delta))$. By  \cite[p.319]{neukirch}, $N_{W/k}$ maps the units of $R_W$ onto $R^\times$.  By \cite[Theorem 14.4]{reiner}, we also have that $\Delta=R[\omega, \bm{\pi}]=R_W[\bm{\pi}]$, and therefore $\mu(\Delta) \subseteq M_m(R_W)$ and  $\widehat{\mu}(M_n(\Delta)) \subseteq M_{mn}(R_W)$. Putting it all together, we get 
\begin{equation}
\label{deltanrd}
\nr_{D/k}(\Delta)=R \qquad \text{and} \qquad  \nr_{M_n(D)/k}(M_n(\Delta))=R.
\end{equation}

Finally, we also have have a normalized valuation $v_D$ on $D$, such that (see \cite[Equation 13.1]{reiner})
\begin{equation}
\label{valuations}
 v_D(a)=\frac{1}{m}v((\nr_{D/k}a)^m)=v(\nr_{D/k}a).
\end{equation}

We connect the two valuations with the reduced norm $\nr_{M_n(D)/k}$ in the matrix algebra using the Dieudonn\'{e} determinant $\det:\GL_n(D) \rightarrow D^\times/[D^\times, D^\times]$. We define $\SL_n(D)$ to be the kernel of the Dieudonn\'{e} determinant.
\begin{lem}
\label{teqn}
Let $v$ and $v_D$ be (normalized) valuations on $k$ and $D$, and let $x \in GL_n(D)$. Then
$ v_D(\det(x))=v(\nr_{M_n(D)/k}(x)).$
\end{lem}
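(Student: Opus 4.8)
The plan is to recognize both sides of the claimed identity as group homomorphisms $\GL_n(D) \to \Z$ and to verify their agreement on a generating set. On the one hand, the Dieudonné determinant $\det \colon \GL_n(D) \to D^\times/[D^\times,D^\times]$ is multiplicative, and the normalized valuation $v_D \colon D^\times \to \Z$ is a homomorphism that kills $[D^\times,D^\times]$ (since $\Z$ is abelian); hence $v_D \circ \det$ descends to a homomorphism $\GL_n(D) \to \Z$. On the other hand, the reduced norm $\nr_{M_n(D)/k} \colon \GL_n(D) \to k^\times$ is multiplicative, so $v \circ \nr_{M_n(D)/k}$ is also a homomorphism $\GL_n(D) \to \Z$. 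It therefore suffices to check that the two homomorphisms coincide on a set of generators of $\GL_n(D)$.

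First I would dispose of the case $n=1$, where the statement is essentially Equation (\ref{valuations}): here $\det$ is the natural projection $D^\times \to D^\times/[D^\times,D^\times]$ and $\nr_{M_1(D)/k} = \nr_{D/k}$, so both sides equal $v_D(x) = v(\nr_{D/k}(x))$. For $n \ge 2$, I would use the standard normal form over a division ring (Gaussian elimination, as in \cite[\S14]{reiner}): $\GL_n(D)$ is generated by the elementary transvections $E_{ij}(a) = I_n + a\,e_{ij}$ (for $i \ne j$, $a \in D$) together with the diagonal matrices $\mathrm{diag}(d,1,\dots,1)$ with $d \in D^\times$, and in fact every matrix reduces by transvections to one of the latter form.

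On the transvections both homomorphisms vanish: the Dieudonné determinant of an elementary matrix is trivial in $D^\times/[D^\times,D^\times]$, so the left side gives $v_D(1) = 0$; and $\widehat{\mu}(E_{ij}(a))$ is a unipotent matrix in $M_{mn}(W)$, whence $\nr_{M_n(D)/k}(E_{ij}(a)) = \det(\widehat{\mu}(E_{ij}(a))) = 1$ by (\ref{defnnrd}), so the right side gives $v(1) = 0$. On $\mathrm{diag}(d,1,\dots,1)$ the Dieudonné determinant is the class of $d$, so the left side is $v_D(d)$; and since $\widehat{\mu}$ sends this matrix to the block-diagonal $\mathrm{diag}(\mu(d), I_m, \dots, I_m)$, Equation (\ref{defnnrd}) gives $\nr_{M_n(D)/k}(\mathrm{diag}(d,1,\dots,1)) = \det(\mu(d)) = \nr_{D/k}(d)$, so by Equation (\ref{valuations}) the right side is $v(\nr_{D/k}(d)) = v_D(d)$. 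The two homomorphisms thus agree on all generators, hence everywhere, which is the lemma.

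The point requiring the most care is the reduction to generators: I must invoke precisely the normal form for $\GL_n(D)$ over a division ring and the accompanying evaluations of the Dieudonné determinant (multiplicativity, triviality on transvections, and value $\bar d$ on $\mathrm{diag}(d,1,\dots,1)$), all of which are classical but should be cited carefully. A conceptually equivalent route is to first establish the factorization $\nr_{M_n(D)/k} = \nr_{D/k}\circ \det$ of the reduced norm through the Dieudonné determinant, and then apply $v$ together with (\ref{valuations}); the generator computation above is exactly what underlies that factorization, so the two presentations rest on the same essential input.
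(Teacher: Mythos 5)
Your proof is correct and follows essentially the same route as the paper's: both arguments use multiplicativity of the Dieudonn\'{e} determinant and the reduced norm to reduce to a generating set of elementary matrices, observe that both sides vanish on the off-diagonal generators, and settle the case $\mathrm{diag}(d,1,\dots,1)$ via Equation (\ref{valuations}). Your write-up is somewhat more explicit about why the reduced norm is trivial on transvections (unipotence of $\widehat{\mu}(E_{ij}(a))$), but the substance is identical.
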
 

\begin{proof} By Equation (\ref{defnnrd}),  $v(\nr_{M_n(D)/k}(x))=v(\det(\widehat{\mu}(x)))$. Since both the reduced norm map and the Dieudonn\'{e} determinant are multiplicative (\cite[Theorem 2.2.5]{ktheory}),  it suffices to prove the equality for elementary matrices. The claim is clear for row-addition and row-swapping matrices.  It remains to check the claim for row-multiplication matrices. Again by multiplicativity, it suffices to consider the diagonal matrix $d =\text{diag}(y,1,1\dots,1)$, for $y \in D$. Then $v_D(\det(d))=v_D(y)$, and $v(\nr_{M_n(D)/k}(d))=v(\nr_{D/k}(y))$. By Equation (\ref{valuations}), the two valuations are equal.
\end{proof}

Now that we have the connection between the reduced norm of a matrix and its Dieudonn\'{e} determinant, we introduce the \textit{type} of a matrix. Note that while we encounter both type numbers of orders and types of matrices in this article, the two terms are already deeply ingrained in the literature but not in any way connected to each other.

\begin{defn}
Let $g \in GL_n(D)$.  We define the \textbf{type} of the matrix $g$ by \[t(g)\coloneqq v_D(\det(g)) \pmod{n}.\] 
\end{defn}

Consider monomial matrices in $\GL_n(D)$, which are of the form $\xi=(\bm{\pi}^{\alpha_i}\delta_{\sigma(i)j})$, where $\delta_{ij}$  is the Kronecker delta and $\sigma$ a permutation in $S_n$. Then $\xi$ is a product of the diagonal matrix $d=(\bm{\pi}^{\alpha_i}\delta_{ij})$ and the permutation matrix $p_\sigma=(\delta_{\sigma(i)j})$, so by \cite [Theorem 2.2.5]{ktheory} its Dieudonn\'{e} determinant  is 
$\displaystyle \det(\xi)=\det(d)\det(p_\sigma)=\text{sgn}(\sigma) \bm{\pi}^{\sum_{i=1}^n \alpha_i}.$ Therefore, we have 
\[
t(\xi)\equiv \sum_{i=1}^n \alpha_i \pmod{n}.
\]

By Lemma \ref{teqn}, $t(g)\equiv v(\nr_{M_n(D)/k}(g)) \pmod{n}$, and  motivated by Equation (\ref{types}) we will use the type of a matrix as a proxy for its reduced norm in type number computations.

\subsection{The building for $\SL_n(D)$}

 We introduce the building-theoretic framework used throughout, following Chapter 9 in \cite{ronan}.  In particular, we construct the affine building for $\SL_n(D)$. we say that two full $\Delta$-lattices  $L_1$ and $L_2$ in $V$ are \textit{homothetic} if $L_1=aL_2$ for some $a\in D$. Homothety of lattices is an equivalence relation, and we denote the homothety class of $L$ by $[L]$.  The vertices in the building  are the homothety classes of lattices in $D^n$, and  there is an edge between two vertices if there are lattices $L_1$ and $L_2$ in their respective homothety classes such that $\bm{\pi} L_1 \subsetneq  L_2\subsetneq L_1$. The vertices of an $\ell$-simplex correspond to chains of lattices of the form $\bm{\pi} L_1 \subsetneq L_2 \subsetneq \dots \subsetneq L_{\ell+1} \subsetneq L_1$. The maximal $(n-1)$-simplices are called \textit{chambers}. 

To each frame of lines generated by a basis $\{v_1, v_2, \dots, v_n\}$, we have an associated subcomplex of the affine building for $\SL_n(D)$, called an \textit{apartment}. The vertices of the apartment  are homothety classes of lattices  of the form $L=\Delta \bm{\pi}^{m_1} v_1\oplus \Delta\bm{\pi}^{m_2}v_2\oplus \dots \oplus \Delta\bm{\pi}^{m_n}v_n, \, m_i \in \Z$, which we encode by $[L]=[m_1, m_2, m_3, \dots, m_n] = [0,m_2-m_1,\dots, m_n-m_1]$. 

 Each apartment is an $(n-1)$-complex and a tessellation of $\R^{n-1}$, with hyperplanes $x_i-x_j=\mu_{ij}$ for $i\in \{1,2,\dots, n\}$, where at the intersection of $(n-1)$ pairwise non-parallel hyperplanes we obtain a homothety class as seen in Example \ref{ex0}. Since we can switch between apartments by conjugating the basis, and conjugation does not change the reduced norm of $\mathcal{N}(\Gamma)$, from now on we fix the apartment $\mathcal{A}$ associated to the standard basis $\{e_1,e_2, \dots, e_n\}$.

\begin{example}
\label{ex0} In Figure \ref{apt}, we see a piece of the apartment in the building for $\SL_3(D)$, where lines in the apartment correspond to \textit{hyperplanes}, and are given by equations of the form $x_s-x_t=\mu, \mu \in \Z$.
\end{example}

\begin{figure}[h]
\label{apt}

\smallskip

\begin{tikzpicture}[scale=0.8, every node/.style={transform shape}]

\draw (0,0) -- +(1,1.75) -- (2,0) -- +(1,1.75)-- (4,0) -- +(1,1.75)-- (6,0) -- +(1,1.75) -- (8,0) -- +(1,1.75) -- (10,0) -- +(1,1.75) -- (12,0) -- +(1,1.75) -- (14,0);

\draw (0,1.75) -- (14, 1.75);

\draw (0,0) -- (14, 0);

\draw (0,0) -- (1,-1.75) -- (2,0) -- (3,-1.75)-- (4,0) -- (5,-1.75)-- (6,0) -- (7,-1.75) -- (8,0) -- (9,-1.75) -- (10,0) -- (11,-1.75) -- (12,0) -- (13,-1.75) -- (14,0);

\draw (0,-1.75) -- (14, -1.75);

\draw (0,-3.5) -- +(1,1.75) -- (2,-3.5) -- +(1,1.75)-- (4,-3.5) -- +(1,1.75)-- (6,-3.5) -- +(1,1.75) -- (8,-3.5) -- +(1,1.75) -- (10,-3.5) -- +(1,1.75) -- (12,-3.5) -- +(1,1.75) -- (14,-3.5);

\draw (0,-3.5) -- (14, -3.5);

\node[above right] at (0,0)  {\fontsize{7}{5}\selectfont$ \,\, [0,2,-3]$};
\node[above right] at (2,0)  {\fontsize{7}{5}\selectfont$ \,\,  [0,2,-2]$};
\node[above right] at (4,0)  {\fontsize{7}{5}\selectfont$ \,\,  [0,2,-1]$};
\node[above right] at (6,0)  {\fontsize{7}{5}\selectfont$  \,\, [0,2,0]$};
\node[above right] at (8,0)  {\fontsize{7}{5}\selectfont$\,\,  [0,2,1]$};
\node[above right] at (10,0)  {\fontsize{7}{5}\selectfont$\,\,  [0,2,2]$};
\node[above right] at (12,0)  {\fontsize{7}{5}\selectfont$\,\,  [0,2,3]$};

\node[above right] at (1,-1.75)  {\fontsize{7}{5}\selectfont$\,\,  [0,1,-3]$};
\node[above right] at (3,-1.75)  {\fontsize{7}{5}\selectfont$\,\,  [0,1,-2]$};
\node[above right] at (5,-1.75)  {\fontsize{7}{5}\selectfont$\,\,  [0,1,-1]$};
\node[above right] at (7,-1.75)  {\fontsize{7}{5}\selectfont$\,\,  [0,1,0]$};
\node[above right] at (9,-1.75)  {\fontsize{7}{5}\selectfont$\,\,  [0,1,1]$};
\node[above right] at (11,-1.75)  {\fontsize{7}{5}\selectfont$\,\,  [0,1,2]$};

\node[above right] at (0,-3.5)  {\fontsize{7}{5}\selectfont$ \,\, [0,0,-4]$};
\node[above right] at (2,-3.5)  {\fontsize{7}{5}\selectfont$ \,\,  [0,0,-3]$};
\node[above right] at (4,-3.5)  {\fontsize{7}{5}\selectfont$ \,\,  [0,0,-2]$};
\node[above right] at (6,-3.5)  {\fontsize{7}{5}\selectfont$  \,\, [0,0,-1]$};
\node[above right] at (8,-3.5)  {\fontsize{7}{5}\selectfont$\,\,  [0,0,0]$};
\node[above right] at (10,-3.5)  {\fontsize{7}{5}\selectfont$\,\,  [0,0,1]$};
\node[above right] at (12,-3.5)  {\fontsize{7}{5}\selectfont$\,\,  [0,0,2]$};

\end{tikzpicture}

\end{figure}

Since the apartment $\mathcal{A}$ corresponds to the frame generated by the standard basis, we obtain a transitive action on $\mathcal{A}$ by multiplying the homothety classes corresponding to its vertices on the left by monomial matrices. We state the following easy to check facts: diagonal matrices of the form $(\bm{\pi}^{\beta_i}\delta_{ij})$ where $\delta_{ij}$ is the Kronecker delta, act on the apartment by translations, row-interchanging elementary matrices act by  reflections with respect to hyperplanes passing through the vertex $[0,0,\dots,0]$ (which we will also refer to as the origin), and  monomial matrices of the form $(\bm{\pi}^{\beta_i}\delta_{\sigma(i)j}) , \sigma\in S_n$  correspond to compositions of such reflections and translations. The type of such a matrix can give us information about the action of the matrix on the apartment.

\begin{lem}
\label{type0}
A monomial matrix of the form $(\bm{\pi}^{\beta_i}\delta_{\sigma(i)j})$ has type $0$ if and only if it acts as a product of reflections on  $\mathcal{A}$.
\end{lem}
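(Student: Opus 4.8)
The plan is to translate both conditions into statements about the affine action of $\xi$ on the apartment $\mathcal{A}$ and to compare them through a single $\Z/n\Z$-valued invariant, the effect on the \emph{color} of vertices. First I would pin down the action. Writing $\xi=(\bm{\pi}^{\beta_i}\delta_{\sigma(i)j})$ as the product of the diagonal matrix $d=(\bm{\pi}^{\beta_i}\delta_{ij})$ and the permutation matrix $p_\sigma=(\delta_{\sigma(i)j})$, and using the facts already recorded above (diagonal matrices translate, row-interchange matrices reflect across hyperplanes through the origin), one checks that
\[
\xi\cdot[m_1,\dots,m_n]=[m_{\sigma(1)}+\beta_1,\dots,m_{\sigma(n)}+\beta_n],
\]
so the linear part of $\xi$ is the coordinate permutation $\sigma$ and its translation part is $\beta=(\beta_1,\dots,\beta_n)$. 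Assign to each vertex its \emph{color} $c([m_1,\dots,m_n]):=\sum_i m_i \pmod n$; this is well defined on homothety classes because replacing $L$ by $\bm{\pi}L$ adds $(1,\dots,1)$ to the exponents and changes the sum by $n$. From the displayed action, $\xi$ shifts every color by $\sum_i\beta_i\equiv t(\xi)\pmod n$, so the color shift is exactly the type.

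Next I would record the key computation: a reflection across a tessellation hyperplane $x_s-x_t=\mu$ preserves color. Indeed, writing $r_{s,t,\mu}$ for that reflection, it is realized by the monomial matrix with $\sigma=(s\,t)$ and $\beta_s=\mu$, $\beta_t=-\mu$ (all other $\beta_i=0$), of type $\mu+(-\mu)=0$; equivalently it sends $[m_1,\dots,m_n]$ to the tuple with $s$-entry $m_t+\mu$ and $t$-entry $m_s-\mu$, leaving $\sum_i m_i$ fixed. Since the color shift is additive along compositions, every product of reflections has color shift $0$. This settles one direction: if $\xi$ acts as a product of reflections, its color shift vanishes, and because that shift equals $t(\xi)$ we get $t(\xi)=0$.

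For the converse I would argue constructively. Assume $t(\xi)=\sum_i\beta_i\equiv 0\pmod n$. Decompose $\sigma$ into transpositions $\sigma=(s_1\,t_1)\cdots(s_r\,t_r)$, let $\xi_0$ be the product of the corresponding origin-reflections $r_{s_i,t_i,0}$ (row-interchange matrices), so $\xi_0$ has linear part $\sigma$ and translation $0$. Then $\eta:=\xi_0^{-1}\xi$ is a pure translation (its linear part is $\sigma^{-1}\sigma=\mathrm{id}$), and by additivity of the type it satisfies $t(\eta)=t(\xi)-t(\xi_0)=0$; thus $\eta$ translates by some $\gamma\in\Z^n$ with $\sum_i\gamma_i\equiv 0\pmod n$. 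As an isometry of $\mathcal{A}$ I may subtract $\tfrac1n\big(\sum_i\gamma_i\big)(1,\dots,1)$, which acts trivially, to arrange $\sum_i\gamma_i=0$; hence $\gamma$ lies in the type $A_{n-1}$ root lattice and is an integral combination of the vectors $e_s-e_t$. Finally each translation by $e_s-e_t$ is a product of two reflections across parallel hyperplanes, namely $r_{s,t,1}\,r_{s,t,0}$. Composing these, $\eta$, and therefore $\xi=\xi_0\eta$, is a product of reflections.

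The step I expect to be the crux is this converse, specifically the identification of the translations achievable by reflections with the root sublattice $\{\sum_i\gamma_i=0\}$ rather than the full translation lattice $\Z^n/\Z(1,\dots,1)$ of the monomial group. This index-$n$ gap is exactly what $t$ measures, so the whole lemma amounts to matching $\ker t$ with the subgroup generated by reflections; the reduction of a type $0$ pure translation into the root lattice, and the fact that each $e_s-e_t$ is a difference of parallel reflections, is where the real content lies.
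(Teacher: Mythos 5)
Your proof is correct, and the direction ``type $0$ $\Rightarrow$ product of reflections'' is handled by a genuinely different route than the paper's. The paper splits $\xi$ as (row-interchange matrix)$\cdot$(element of $\SL_n(D)$) and then cites the construction of the affine Weyl group in Brown's book for the fact that the $\SL_n(D)$-part acts by reflections; the easy direction (reflections have type $0$) is done by the same explicit computation you give. You instead make the hard direction fully constructive: peel off the permutation part as a product of origin reflections, observe that what remains is a pure translation whose vector, after discarding a multiple of $(1,\dots,1)$ (a homothety, hence trivial on $\mathcal{A}$), lies in the $A_{n-1}$ root lattice $\{\gamma\in\Z^n:\sum_i\gamma_i=0\}$, and realize each generator $e_s-e_t$ as a product of two reflections across parallel hyperplanes $x_s-x_t=0$ and $x_s-x_t=1$. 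This buys self-containedness and makes the structural point explicit --- the lemma is precisely the statement that $\ker t$ on the monomial group is the affine Weyl group inside the extended affine Weyl group, with $t$ detecting the index-$n$ quotient --- at the cost of a slightly longer argument; the paper's version is shorter but leans on an external reference and is a little loose in asserting that an arbitrary element of $\SL_n(D)$ ``acts on the building as a product of reflections'' (only the monomial ones preserve $\mathcal{A}$), a gap your argument does not have. Your ``color'' invariant is exactly the paper's type of a lattice from Equation (\ref{def types}), so that part is a repackaging rather than new content. One small point to tidy: the claim that $\xi_0$ has linear part $\sigma$ and that $\xi_0^{-1}\xi$ is a pure translation depends on the order of composition of the transpositions and on which of $\sigma,\sigma^{-1}$ permutes coordinates in your displayed action; this is harmless but worth a sentence.
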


\begin{proof} Let $\xi \colonequals ({\pi}^{\beta_i}\delta_{\sigma(i)j})$, then the action of $\xi$ preserves the frame $\{e_1, \dots, e_n\}$ and therefore acts on the apartment. Since $t(g) \equiv 0 \pmod{n}$, either $\xi \in \SL_n(D)$ or we can write it as $\xi=u\cdot \xi'$, where $u$ is a row-interchanging matrix and $\xi'\in \SL_n(D)$.  By the obervation above, $u$ acts on $\mathcal{A}$ by a reflection, so it is enough to show that $\xi \in \SL_n(D)$ acts on the buildings as a product of reflections. This is indeed true; for example, see Section 8 in \cite{brown} for the construction of the associated Weyl group.

Conversely, suppose we have a monomial matrix corresponding to the reflection with respect to the hyperplane $x_s-x_t=\mu$. Then we can easily check that  $\eta=(\bm{\pi}^{\alpha_i}\delta_{\tau(i)j})$, 
where $\tau=(st)$,  
\[
\alpha_i = \begin{cases} 
      0 &  i \ne s \text{ and }  i\ne t\\
      \mu & i=s\\
      -\mu& i=t
   \end{cases}
\] and  $t(\eta)\equiv 0 \pmod{n}$. Any product of such reflections will also be a monomial matrix of type $0$ of the form $(\bm{\pi}^{\beta_i}\delta_{\sigma(i)j})$.
\end{proof}

We can extend the concept of type to lattices as follows. The group $\text{GL}_n(D)$ acts transitively on the set of $\Delta$-lattices, the action being well-defined on homothety classes.  Setting $L_0$ as above to have type $0$, the type of any other lattice $L\coloneqq gL_0$ is defined as 
\[
t(L) \equiv t(g) \pmod{n},
\] where $t(g)$ is the type of the matrix defined earlier. Note that the type of a lattice is well defined on each homothety class. It follows then that $g \in \GL_n(D)$ will send a vertex $[L]$ to a vertex $[L']$ with 
\begin{equation}
\label{def types}
t(L') \equiv t(L)+t(g) \pmod{n}.
\end{equation}
Since $[L]=[m_1, m_2, \dots, m_n]$  is obtained from $L_0$ by (left) multiplication with the diagonal matrix $(\bm{\pi}^{m_i}\delta_{ij})$,  we have $$t(L)\equiv\sum_{i=1}^n m_i \pmod{n}.$$

There exists an immediate connection between the building for $\SL_n(D)$ and maximal orders in $M_n(D)$, which we explore next.

%
%
%

\subsection{Maximal orders.}

We  introduce  the correspondence between  maximal orders in $M_n(D)$ and  homothety classes of lattices.  Following Chapter 5 in \cite{reiner}, let  $L_0$ be the free (right) $\Delta$-lattice generated by $\{e_1, \dots, e_n\}$, then we identify $\text{End}_{\Delta}(L_0)$ with the maximal order $M_n(\Delta)$. By \cite[(17.3)]{reiner},  every maximal order in $M_n(D)$ is conjugate to $M_n(\Delta)$ by some $\xi \in \text{GL}_n(D)$, so we identify $\xi M_n(\Delta)\xi^{-1}$ with $\text{End}_{\Delta}(\xi L_0)$.  One can easily check that $[L_1]=[L_2]$ if and only if $\text{End}_{\Delta}(L_1)=\text{End}_{\Delta}(L_2)$, so we can identify each homothety class of a lattice with a maximal order.  By the previous subsection, we have correspondences between homothety classes of lattices, maximal orders, and vertices in the building.

Next, we look at normalizers of maximal orders. Let $\Lambda=M_n(\Delta)$, in which case  $\Lambda^\times = \GL_n(\Delta)$.
\begin{prop}
\label{maxnorm} For $\Lambda=M_n(\Delta)$, the normalizer is given by  $\mathcal{N}(\Lambda)=D^\times \Lambda^\times$.
\end{prop}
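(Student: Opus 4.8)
The plan is to prove the two inclusions separately, treating $D^\times \Lambda^\times \subseteq \mathcal{N}(\Lambda)$ as the easy direction and $\mathcal{N}(\Lambda) \subseteq D^\times\Lambda^\times$ as the direction that carries the content. For the easy inclusion I would embed $D^\times$ diagonally, so that $d \in D^\times$ acts as the scalar matrix $dI$. Conjugation of $\Lambda = M_n(\Delta) = \operatorname{End}_\Delta(L_0)$ by $dI$ is just entrywise conjugation by $d$, and since $d\Delta d^{-1}$ is again a maximal $R$-order in $D$, the uniqueness of $\Delta$ forces $d\Delta d^{-1} = \Delta$. Hence $(dI)\Lambda(dI)^{-1} = M_n(d\Delta d^{-1}) = \Lambda$, so $D^\times \subseteq \mathcal{N}(\Lambda)$; as $\Lambda^\times$ visibly normalizes $\Lambda$, the whole product $D^\times\Lambda^\times$ lands in $\mathcal{N}(\Lambda)$.

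For the reverse inclusion I would exploit the correspondence between maximal orders and homothety classes recorded just above the statement. Let $\xi \in \GL_n(D)$ satisfy $\xi\Lambda\xi^{-1} = \Lambda$. Writing $\Lambda = \operatorname{End}_\Delta(L_0)$ and using that conjugation by $\xi$ sends a $\Delta$-linear endomorphism of $L_0$ to a $\Delta$-linear endomorphism of $\xi L_0$, I obtain $\operatorname{End}_\Delta(\xi L_0) = \xi\operatorname{End}_\Delta(L_0)\xi^{-1} = \operatorname{End}_\Delta(L_0)$. The stated equivalence $[L_1]=[L_2]$ if and only if $\operatorname{End}_\Delta(L_1)=\operatorname{End}_\Delta(L_2)$ then gives $[\xi L_0] = [L_0]$, i.e. $\xi L_0 = a L_0$ for some $a \in D^\times$. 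Viewing $a$ as the scalar matrix $aI$, the relation $\xi L_0 = (aI)L_0$ rearranges to $(aI)^{-1}\xi \cdot L_0 = L_0$. Since $L_0 = \Delta^n$ and $\GL_n(D)$ acts on the left, the stabilizer of $L_0$ is exactly $\operatorname{Aut}_\Delta(L_0) = \GL_n(\Delta) = \Lambda^\times$, so $(aI)^{-1}\xi \in \Lambda^\times$ and therefore $\xi \in D^\times\Lambda^\times$, as desired.

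The bookkeeping in the easy inclusion and the identity $\xi\operatorname{End}_\Delta(L_0)\xi^{-1} = \operatorname{End}_\Delta(\xi L_0)$ are routine, so the only genuine step is translating the normalizer condition into a homothety of lattices; the crux is thus the maximal-order/homothety-class dictionary, which is already available. The two points I would take care with are that the homothety scalar $a$ may indeed be chosen in $D^\times$ and embeds diagonally, and that the lattice stabilizer is precisely $\Lambda^\times$ and nothing larger. (As a sanity check, since $v_D$ identifies $D^\times/\Delta^\times$ with $\Z$ generated by $\bm{\pi}$ and $\Delta^\times \subseteq \Lambda^\times$, this also shows $\mathcal{N}(\Lambda) = \langle \bm{\pi}I\rangle\,\Lambda^\times$, which is the form of the answer most useful for later type computations.)
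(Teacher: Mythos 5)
Your proof is correct, but it takes a genuinely different route from the paper for the containment $\mathcal{N}(\Lambda) \subseteq D^\times\Lambda^\times$. The easy inclusion is handled identically in both arguments (uniqueness of $\Delta$ forces $d\Delta d^{-1}=\Delta$). For the reverse inclusion, the paper does not use the lattice dictionary at all: it quotes Reiner (37.25)--(37.27) for the isomorphism $\mathcal{N}(\Lambda)/k^\times\Lambda^\times \cong \Z/m\Z$, observes that $\bm{\pi}\Lambda$ is the unique two-sided ideal of $\Lambda$ so that $\bm{\pi}\in\mathcal{N}(\Lambda)$ with $\bm{\pi}^m$ the smallest power landing in $k^\times$, and concludes that $\mathcal{N}(\Lambda)=\langle \bm{\pi}, k^\times\Lambda^\times\rangle = D^\times\Lambda^\times$. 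You instead argue directly: a normalizing $\xi$ satisfies $\operatorname{End}_\Delta(\xi L_0)=\operatorname{End}_\Delta(L_0)$, hence $[\xi L_0]=[L_0]$, hence $\xi = (aI)u$ with $a\in D^\times$ and $u$ in the stabilizer $\GL_n(\Delta)=\Lambda^\times$ of $L_0$. Your argument is more self-contained, using only the maximal-order/homothety-class correspondence and the endomorphism-ring criterion that the paper sets up in the preceding paragraph, whereas the paper's argument outsources the key finiteness to Reiner but gets the extra structural statement $\mathcal{N}(\Lambda)/k^\times\Lambda^\times\cong\Z/m\Z$ for free. Your closing sanity check, $\mathcal{N}(\Lambda)=\langle\bm{\pi}I\rangle\Lambda^\times$ via $D^\times=\langle\bm{\pi}\rangle\Delta^\times$, is also correct and recovers exactly the generator description the paper extracts from Reiner. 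The only point worth polishing is the left/right module bookkeeping (the paper itself is slightly inconsistent about whether lattices are left or right $\Delta$-modules), but your argument goes through under either consistent convention.
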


\begin{proof}
 We follow a similar approach to the discussion in \cite[Section 3.1]{local}.  Since $\Delta$ is the unique maximal order in $D$, note that $x \Delta x^{-1}=\Delta$ for any $x \in D^\times$. Embedding $D^\times \hookrightarrow \GL_n(D)$ diagonally,  $xM_n(\Delta)x^{-1}=M_n(x\Delta x^{-1})=M_n(\Delta)$, so $D^\times  \subseteq \mathcal{N}(\Lambda)$. In addition, clearly $\Lambda^\times \subseteq \mathcal{N}(\Lambda)$, and therefore $D^\times \Lambda^\times \subseteq \mathcal{N}(\Lambda)$.

Now we prove the other containment.  From (37.25)-(37.27) of \cite{reiner}, 
\begin{equation*}
\mathcal{N}(\Lambda)/k^\times \Lambda^\times \cong \Z/m\Z.
\end{equation*}
By \cite[17.3]{reiner}, $\bm{\pi}\Lambda$ is the unique two-sided ideal of $\Lambda$, which implies $\bm{\pi} \in \mathcal{N}(\Lambda)$. Since $m$ is the smallest power such that $\bm{\pi}^m\in k^\times$, it follows that the normalizer is generated by the set $\{ \bm{\pi}, k^\times \Lambda^\times \}$. We already have $D^\times\Lambda^\times \subseteq \mathcal{N}(\Lambda)$, so $D^\times\Lambda^\times \subseteq \langle \bm{\pi}, k^\times\Lambda^\times \rangle$. On the other hand, $\langle \bm{\pi}, k^\times \Lambda^\times \rangle \subseteq D^\times\Lambda^\times$ since $\bm{\pi} \in D^\times$. Thus $\mathcal{N}(\Lambda)=D^\times \Lambda^\times$.
\end{proof}

 Since all maximal orders in $M_n(D)$ are conjugate to $M_n(\Delta)$, we have the following easy corollary:

\begin{cor}
\label{max order norm}
Suppose $\Lambda$ is a maximal order in $A= M_n(D)$. Then $$\nr_{A/k}(\mathcal{N}(\Lambda))=(k^\times)^n R^\times.$$
\end{cor}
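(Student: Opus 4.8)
The plan is to reduce to the standard maximal order $\Lambda = M_n(\Delta)$ and then compute the reduced norm of its normalizer directly, using multiplicativity of $\nr_{A/k}$. First I would note that for any $\xi \in \GL_n(D)$ one has $\mathcal{N}(\xi \Lambda \xi^{-1}) = \xi\, \mathcal{N}(\Lambda)\, \xi^{-1}$, and since the reduced norm is conjugation-invariant (that is, $\nr_{A/k}(\xi g \xi^{-1}) = \nr_{A/k}(g)$ for all $g \in A^\times$), the set $\nr_{A/k}(\mathcal{N}(\Lambda))$ is unchanged under conjugation. Because every maximal order in $M_n(D)$ is conjugate to $M_n(\Delta)$ by \cite[(17.3)]{reiner}, it suffices to treat $\Lambda = M_n(\Delta)$. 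For this order Proposition \ref{maxnorm} gives $\mathcal{N}(\Lambda) = D^\times \Lambda^\times$ with $\Lambda^\times = \GL_n(\Delta)$, and since $\nr_{A/k}\colon A^\times \to k^\times$ is a group homomorphism, the image of this product set factors as $\nr_{A/k}(\mathcal{N}(\Lambda)) = \nr_{A/k}(D^\times)\,\nr_{A/k}(\Lambda^\times)$. I would then compute the two factors separately.

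For the first factor, embedding $D$ diagonally and applying Equation (\ref{nrD}) gives $\nr_{A/k}(x) = (\nr_{D/k}(x))^n$ for $x \in D^\times$, so $\nr_{A/k}(D^\times) = (\nr_{D/k}(D^\times))^n$. The identities recorded before Equation (\ref{deltanrd}), namely $\nr_{D/k}(\bm{\pi}) = (-1)^{m-1}\pi$ together with $\nr_{D/k}(\alpha) = N_{W/k}(\alpha)$ for $\alpha \in W$ and $N_{W/k}(R_W^\times) = R^\times$, show that $\nr_{D/k}$ surjects onto $k^\times = \langle \pi \rangle R^\times$ (the factor $(-1)^{m-1}$ being absorbed into $R^\times$). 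Hence $\nr_{A/k}(D^\times) = (k^\times)^n$. For the second factor, every element of $\Lambda^\times = \GL_n(\Delta)$ is a unit of $M_n(\Delta)$, so by Equation (\ref{deltanrd}) its reduced norm lies in $R$ and is a unit, giving $\nr_{A/k}(\Lambda^\times) \subseteq R^\times$; conversely, embedding any $u \in R_W^\times \subseteq \Delta^\times$ as $\text{diag}(u, 1, \dots, 1)$ and applying the definition of $\nr_{A/k}$ in Equation (\ref{defnnrd}) yields $\nr_{A/k}(\text{diag}(u,1,\dots,1)) = N_{W/k}(u)$, which ranges over all of $R^\times$. Thus $\nr_{A/k}(\Lambda^\times) = R^\times$. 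Combining the two factors gives $\nr_{A/k}(\mathcal{N}(\Lambda)) = (k^\times)^n R^\times$, as claimed.

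Given Proposition \ref{maxnorm}, this is essentially a routine computation, so I do not anticipate a serious obstacle. The only points requiring care are the exact surjectivity statements: confirming that $\nr_{D/k}(D^\times)$ is all of $k^\times$ (in particular that the image contains a uniformizer, supplied by $\bm{\pi}$) and that $\nr_{A/k}(\Lambda^\times)$ is exactly $R^\times$ rather than a proper subgroup. Both follow immediately from the reduced-norm identities already established for $\nr_{D/k}$ and $N_{W/k}$, so the argument is short once the reduction to $M_n(\Delta)$ and the multiplicative factorization are in place.
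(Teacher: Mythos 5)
Your proof is correct and follows essentially the same route as the paper: reduce to $\Lambda = M_n(\Delta)$ via conjugation-invariance of the reduced norm, apply Proposition \ref{maxnorm} to write $\mathcal{N}(\Lambda) = D^\times \GL_n(\Delta)$, and compute the two factors using Equations (\ref{nrD}) and (\ref{deltanrd}). The only difference is that you spell out the surjectivity of $\nr_{D/k}$ onto $k^\times$ and of $\nr_{A/k}$ on $\GL_n(\Delta)$ onto $R^\times$ explicitly, where the paper simply cites Reiner and Equation (\ref{deltanrd}); this is additional detail, not a different argument.
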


\begin{proof}
Let $\Lambda$ be a maximal order in $M_n(D)$.Then $\Lambda=\xi M_n(\Delta)\xi^{-1}$ for some $\xi \in \GL_n(D)$.  By Proposition \ref{maxnorm}, 

\[
\mathcal{N}(\Lambda) = \mathcal{N}( \xi M_{n}(\Delta) \xi^{-1})=\xi \mathcal{N}(M_n(\Delta))\xi^{-1}=\xi D^\times \GL_n(\Delta)\xi^{-1}.\]

 Since norms are multiplicative and $k$ is commutative, we have $$\nr_{A/k}(\xi D^\times \GL_n(\Delta)\xi^{-1})=\nr_{A/k}(D^\times)\nr_{A/k}(\GL_n(\Delta)).$$ 
 By Equation (\ref{nrD}) and the fact that $\nr_{D/k}(D)=k$ (see page 153 in \cite{reiner}), we get $\nr_{A/k}(D^\times)=(k^\times)^n$. That $\nr_{A/k}(\GL_n(\Delta))=R^\times$ follows from  Equation (\ref{deltanrd}).\end{proof}

\subsection{Tiled orders.}
\label{tiled}

We  use the building-theoretic framework to study the following  orders.

\begin{defn}
 We say  $\Gamma \subseteq M_n(D)$ is a tiled order if it contains a conjugate of the diagonal ring $\text{diag}(\Delta, \Delta, \dots, \Delta)$.
\end{defn}

For example, maximal orders are tiled, since every maximal order in $M_n(D)$ is conjugate to $M_n(\Delta)$. When $n=1$, $\Delta$ is the unique maximal order in $D$ and therefore equal to any of its conjugates, so  $\Gamma \subset D$ is tiled if and only if it is maximal and $\Gamma=\Delta$.

None of the results in this subsection are new, but we include them together with examples for the convenience of the reader. We describe the connection  between tiled orders and buildings following \cite{poly}, while the more specific results involving structural invariants follow \cite{ants}.

Note that conjugation does not change the structure of the normalizer of an order, and simply might change the apartment we work in. Conjugating if necessary, from  now on  we may and will assume that $\Gamma$  contains $\text{diag}(\Delta, \Delta, \dots, \Delta)$, and we fix the apartment $\mathcal{A}$ where $[0,0, \dots, 0]$ corresponds to $M_n(\Delta)$. Then by Proposition 2.1 in \cite{poly}, $\Gamma=(\p_\nu^{\mu_{ij}})$ where  $\p=\bm{\pi}\Delta$ and 
\begin{equation*}
\mu_{ij}+\mu_{jk}\ge \mu_{ik} \quad \text{for all} \, i,j,k \le n, \quad \mu_{ii}=0.
\end{equation*}

We denote by $M_\Gamma=(\mu_{ij})$ the \textit{exponent matrix} of $\Gamma$.   We associate to $\Gamma$ a polytope  in the apartment the following way. The equations of the form $x_i-x_j=\mu \in \Z, \, 1\le i,j \le n$ determine hyperplanes in $\R^{n-1}$, and the hyperplanes $H_{ij}\coloneqq x_i-x_j=\mu_{ij}$ with $\mu_{ij}$  as above are the bounding hyperplanes of a convex polytope, which we denote by $C_\Gamma$. In addition, the vertices given by $$[P_1]=[\mu_{11}, \mu_{21},\dots, \mu_{n1}],\,\, [P_2]=[\mu_{12}, \mu_{22},\dots, \mu_{n2}], \,\, \dots, \,\, [P_n]=[\mu_{1n}, \mu_{2n},\dots, \mu_{nn}],$$ which correspond to the homothety classes of the  columns of  $\Gamma$,  are extremal points on $C_\Gamma$  \cite[Proposition 2.2]{normalizers}  and  uniquely determine $\Gamma$ \cite[Remark II.4]{graduated}. From now on, we will refer to the homothety classes $[P_i]$ as the \textit{distinguished vertices} of $C_\Gamma$.

\begin{figure}[h]
\caption{}
\label{general}

\smallskip

\begin{tikzpicture}[scale=0.8, every node/.style={transform shape}]

\draw[fill=blue!50] (8,-3.5)--(9,-5.25)--(7,-5.25) -- (6,-3.5) -- (8,-3.5);
\draw[fill=green!50]  (10,-3.5) -- (9,-1.75) -- (10, 0) -- (11, -1.75) -- (10, -3.5);
\draw[fill=yellow!50]  (10,-3.5) -- (9,-1.75) -- (8, -3.5) -- (9, -5.25) -- (10, -3.5);

\draw (0,0) -- +(1,1.75) -- (2,0) -- +(1,1.75)-- (4,0) -- +(1,1.75)-- (6,0) -- +(1,1.75) -- (8,0) -- +(1,1.75) -- (10,0) -- +(1,1.75) -- (12,0) -- +(1,1.75) -- (14,0);

\draw (0,1.75) -- (14, 1.75);

\draw (0,0) -- (14, 0);

\draw (0,0) -- (1,-1.75) -- (2,0) -- (3,-1.75)-- (4,0) -- (5,-1.75)-- (6,0) -- (7,-1.75) -- (8,0) -- (9,-1.75) -- (10,0) -- (11,-1.75) -- (12,0) -- (13,-1.75) -- (14,0);

\draw (0,-1.75) -- (14, -1.75);

\draw (0,-3.5) -- +(1,1.75) -- (2,-3.5) -- +(1,1.75)-- (4,-3.5) -- +(1,1.75)-- (6,-3.5) -- +(1,1.75) -- (8,-3.5) -- +(1,1.75) -- (10,-3.5) -- +(1,1.75) -- (12,-3.5) -- +(1,1.75) -- (14,-3.5);

\draw (0,-3.5) -- (14, -3.5);

\draw (0,-3.5) -- (1,-5.25) -- (2,-3.5) -- (3,-5.25)-- (4,-3.5) -- (5,-5.25)-- (6,-3.5) -- (7,-5.25) -- (8,-3.5) -- (9,-5.25) -- (10,-3.5) -- (11,-5.25) -- (12,-3.5) -- (13,-5.25) -- (14,-3.5);

\draw (0,-5.25) -- (14, -5.25);

\draw (0,-7) -- +(1,1.75) -- (2,-7) -- +(1,1.75)-- (4,-7) -- +(1,1.75)-- (6,-7) -- +(1,1.75) -- (8,-7) -- +(1,1.75) -- (10,-7) -- +(1,1.75) -- (12,-7) -- +(1,1.75) -- (14,-7);

\draw (0,-7) -- (14, -7);

\node[above right] at (0,0)  {\fontsize{7}{5}\selectfont$ \,\, [0,2,-3]$};
\node[above right] at (2,0)  {\fontsize{7}{5}\selectfont$ \,\,  [0,2,-2]$};
\node[above right] at (4,0)  {\fontsize{7}{5}\selectfont$ \,\,  [0,2,-1]$};
\node[above right] at (6,0)  {\fontsize{7}{5}\selectfont$  \,\, [0,2,0]$};
\node[above right] at (8,0)  {\fontsize{7}{5}\selectfont$\,\,  [0,2,1]$};
\node[above right] at (10,0)  {\fontsize{7}{5}\selectfont$\,\,  [0,2,2]$};
\node[above right] at (12,0)  {\fontsize{7}{5}\selectfont$\,\,  [0,2,3]$};

\node[above right] at (1,-1.75)  {\fontsize{7}{5}\selectfont$\,\,  [0,1,-3]$};
\node[above right] at (3,-1.75)  {\fontsize{7}{5}\selectfont$\,\,  [0,1,-2]$};
\node[above right] at (5,-1.75)  {\fontsize{7}{5}\selectfont$\,\,  [0,1,-1]$};
\node[above right] at (7,-1.75)  {\fontsize{7}{5}\selectfont$\,\,  [0,1,0]$};
\node[above right] at (9,-1.75)  {\fontsize{7}{5}\selectfont$\,\,  [0,1,1]$};
\node[above right] at (11,-1.75)  {\fontsize{7}{5}\selectfont$\,\,  [0,1,2]$};

\node[above right] at (0,-3.5)  {\fontsize{7}{5}\selectfont$ \,\, [0,0,-4]$};
\node[above right] at (2,-3.5)  {\fontsize{7}{5}\selectfont$ \,\,  [0,0,-3]$};
\node[above right] at (4,-3.5)  {\fontsize{7}{5}\selectfont$ \,\,  [0,0,-2]$};
\node[above right] at (6,-3.5)  {\fontsize{7}{5}\selectfont$  \,\, [0,0,-1]$};
\node[above right] at (8,-3.5)  {\fontsize{7}{5}\selectfont$\,\,  [0,0,0]$};
\node[above right] at (10,-3.5)  {\fontsize{7}{5}\selectfont$\,\,  [0,0,1]$};
\node[above right] at (12,-3.5)  {\fontsize{7}{5}\selectfont$\,\,  [0,0,2]$};

\node[above right] at (1,-5.25)  {\fontsize{7}{5}\selectfont$\,\,  [0,-1,-4]$};
\node[above right] at (3,-5.25)  {\fontsize{7}{5}\selectfont$\,\,  [0,-1,-3]$};
\node[above right] at (5,-5.25)  {\fontsize{7}{5}\selectfont$\,\,  [0,-1,-2]$};
\node[above right] at (7,-5.25)  {\fontsize{7}{5}\selectfont$\,\,  [0,-1,-1]$};
\node[above right] at (9,-5.25)  {\fontsize{7}{5}\selectfont$\,\,  [0,-1,0]$};
\node[above right] at (11,-5.25)  {\fontsize{7}{5}\selectfont$\,\,  [0,-1,1]$};

\node[above right] at (0,-7)  {\fontsize{7}{5}\selectfont$ \,\, [0,-2,-5]$};
\node[above right] at (2,-7)  {\fontsize{7}{5}\selectfont$ \,\,  [0,-2,-4]$};
\node[above right] at (4,-7)  {\fontsize{7}{5}\selectfont$ \,\,  [0,-2,-3]$};
\node[above right] at (6,-7)  {\fontsize{7}{5}\selectfont$  \,\, [0,-2,-2]$};
\node[above right] at (8,-7)  {\fontsize{7}{5}\selectfont$\,\,  [0,-2,-1]$};
\node[above right] at (10,-7)  {\fontsize{7}{5}\selectfont$\,\,  [0,-2,0]$};
\node[above right] at (12,-7)  {\fontsize{7}{5}\selectfont$\,\,  [0,-2,1]$};

\end{tikzpicture}

\end{figure}

\medskip

\begin{example}
\label{ex1} Let $\Gamma$ with exponent matrix  $M_\Gamma=\left( \begin{array}{ccc}
0 & 1 &1 \\
0 & 0 & 1\\
0 & 1 & 0\end{array} \right)$.
In Figure \ref{general} we see the associated convex polytope $C_\Gamma$ in blue determined by
$$0 \le x_1-x_2\le 1$$
$$0 \le x_1-x_3\le 1$$
$$-1 \le x_2-x_3\le 1.$$ Note that $C_\Gamma$ is also the convex hull of its distinguished vertices $$[P_1]=[0,0,0], \quad  [P_2]=[0,-1,0] = [1,0,1], \quad [P_3]=[0,0,-1]=[1,1,0].$$

 We also see that the action of $\GL_n$ on the apartment extends to polytopes and their associated orders. For example, consider the matrix $\xi=\left( \begin{array}{ccc}
0 & 0 &1 \\
0 & \bm{\pi} & 0\\
\bm{\pi} & 0 & 0\end{array} \right)$, then $\xi \Gamma \xi^{-1}=\Gamma'$ with exponent matrix  $M_{\Gamma'}=\left( \begin{array}{ccc}
0 & 0 &-1 \\
2 & 0 & 0\\
2 & 1 & 0\end{array} \right)$ and convex polytope $C_{\Gamma'}$ depicted in Figure \ref{general} in green. Notice that $\xi=\left( \begin{array}{ccc}
1 & 0 &0 \\
0 & \bm{\pi} & 0\\
0& 0 & \bm{\pi}\end{array} \right)\left( \begin{array}{ccc}
0 & 0 &1 \\
0 & 1& 0\\
1 & 0 & 0\end{array} \right)$ decomposes as a product of a diagonal and a permutation matrix. The permutation matrix corresponds to the reflection with respect to the hyperplane $x_1-x_3=0$, which reflects $C_\Gamma$ to the polytope $C_{\widetilde{\Gamma}}$ in yellow. The diagonal matrix then corresponds to translating $C_{\widetilde{\Gamma}}$ so that the vertex $[0,1,1]$ aligns with $[0,2,2]$, which then gives $C_{\Gamma'}$. 

\end{example}

The first connection between the geometry of $C_\Gamma$ and algebraic properties of $\Gamma$ is the following:

\begin{prop}[Proposition 3.3, \cite{poly}]
Let $\Gamma$ be a tiled order with convex polytope $C_\Gamma$, and consider the set of maximal orders $\{\Lambda_i\}$ corresponding to the vertices on $C_\Gamma$. Then $\Gamma=\cap_i \Lambda_i$.
\end{prop}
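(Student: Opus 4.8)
The plan is to reduce the asserted equality of orders to an entrywise identity of exponent matrices, after identifying the maximal order attached to each vertex. First I would set up the local dictionary. Since $\p=\bm{\pi}\Delta=\Delta\bm{\pi}$ is a two-sided ideal of $\Delta$, a vertex $[m_1,\dots,m_n]$ is represented by the lattice $L=\bigoplus_t \bm{\pi}^{m_t}\Delta e_t$, and a short computation gives
\[
\text{End}_\Delta(L)=\{(a_{ij}) : a_{ij}\bm{\pi}^{m_j}\in \bm{\pi}^{m_i}\Delta\}=(\p^{\,m_i-m_j}).
\]
Thus the maximal order $\Lambda_v$ at a vertex $v=[m^{(v)}]$ is the tiled-type ring whose exponent matrix is $(m_i^{(v)}-m_j^{(v)})$. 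Since $\p^a\cap\p^b=\p^{\max(a,b)}$, the intersection $\bigcap_v\Lambda_v$ is again of this form, with $(i,j)$-exponent $\max_v\bigl(m_i^{(v)}-m_j^{(v)}\bigr)$. The proposition therefore reduces to proving $\mu_{ij}=\max_v\bigl(m_i^{(v)}-m_j^{(v)}\bigr)$ for all $i,j$.

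For the inclusion $\Gamma\subseteq\bigcap_v\Lambda_v$ I would observe that every vertex $v$ lies in $C_\Gamma$ and so satisfies the defining inequalities $m_i^{(v)}-m_j^{(v)}\le\mu_{ij}$; entrywise this reads $\p^{\mu_{ij}}\subseteq\p^{\,m_i^{(v)}-m_j^{(v)}}$, i.e. $\Gamma\subseteq\Lambda_v$ for each $v$. In particular $\max_v\bigl(m_i^{(v)}-m_j^{(v)}\bigr)\le\mu_{ij}$.

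For the reverse inclusion I would invoke the distinguished vertices $[P_j]=[\mu_{1j},\dots,\mu_{nj}]$, which are extremal points, hence genuine vertices, of $C_\Gamma$. Substituting $m_i=\mu_{ij}$ into the dictionary above, the maximal order $\Lambda_{[P_j]}$ has $(i,j)$-exponent $\mu_{ij}-\mu_{jj}=\mu_{ij}$; that is, $\Lambda_{[P_j]}$ agrees with $\Gamma$ exactly in its $j$th column. Consequently the upper bound $\mu_{ij}$ is attained at $v=[P_j]$ for every $i$, so $\max_v\bigl(m_i^{(v)}-m_j^{(v)}\bigr)=\mu_{ij}$ and the exponent matrices coincide. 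Equivalently, one gets the sandwich
\[
\Gamma\subseteq\bigcap_{\text{all }v}\Lambda_v\subseteq\bigcap_j\Lambda_{[P_j]}=\Gamma,
\]
forcing equality throughout. Here the triangle inequalities $\mu_{ik}\le\mu_{ij}+\mu_{jk}$ are exactly what guarantee that each $[P_j]$ really lies in $C_\Gamma$ (equivalently $\Lambda_{[P_j]}\supseteq\Gamma$), so the hypothesis on $M_\Gamma$ is used precisely at this point.

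The main obstacle is the opening step: correctly computing the exponent matrix of the maximal order at a vertex and fixing its orientation ($m_i-m_j$ versus $m_j-m_i$) against the chosen polytope convention, using that $\p$ is two-sided so that left and right $\bm{\pi}$-powers agree. Once that dictionary is secure, both inclusions fall out of the definition of $C_\Gamma$ and the elementary identity $\p^a\cap\p^b=\p^{\max(a,b)}$, the only genuine content being that the distinguished columns $[P_j]$ realize the bounds $\mu_{ij}$, which is immediate from their definition.
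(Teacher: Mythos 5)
The paper states this result only as a citation to \cite[Proposition 3.3]{poly} and supplies no proof of its own, so there is nothing internal to compare against; your argument is correct and is the standard one. Your dictionary $\Lambda_{[m_1,\dots,m_n]}=(\p^{\,m_i-m_j})$ matches the convention the paper uses elsewhere (e.g.\ in the proof of Corollary \ref{movep}, where $\Lambda_{[P_\ell]}=(\p^{\mu_{i\ell}-\mu_{j\ell}})$), and the sandwich $\Gamma\subseteq\bigcap_v\Lambda_v\subseteq\bigcap_j\Lambda_{[P_j]}=\Gamma$, with the triangle inequalities $\mu_{ij}+\mu_{j\ell}\ge\mu_{i\ell}$ guaranteeing both that every vertex of $C_\Gamma$ gives $\Lambda_v\supseteq\Gamma$ and that the maximum $\max_v(m_i^{(v)}-m_j^{(v)})=\mu_{ij}$ is attained at the distinguished vertex $[P_j]$, is exactly the intended argument.
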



We can make the connection between the geometry of $C_\Gamma$ and the algebraic properties of $\Gamma$ more explicit.  In unpublished work \cite{zas} (see \cite{graduated}), Zassenhaus  introduced a set of \textit{structural invariants}  for  tiled orders, defined by:\[ m_{ij\ell}=\mu_{ij}+\mu_{j\ell}-\mu_{i\ell}, \, \text{for } 1\le i,j,\ell\le n.\] Each isomorphism class is then determined by these invariants, and we have the following isomorphism criterion.  

\begin{prop}[Zassenhaus, \cite{zas}]
\label{isom}
Let $\Gamma, \Gamma' \subset M_n(D)$ be two tiled orders containing \\
$\text{diag}(\Delta,\Delta, \dots, \Delta)$, and let $m_{ij\ell}$ and  $m'_{ij\ell}$  be their structural invariants. Then $\Gamma$ and $\Gamma'$ are isomorphic if and only if there exists $\sigma \in S_n$ such that \[m'_{ij\ell}=m_{\sigma(i)\sigma(j)\sigma(\ell)}\quad \text{ for all } \quad 1\le i,j,\ell\le n.\]
\end{prop}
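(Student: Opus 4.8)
The plan is to reduce the algebraic notion of isomorphism to conjugacy in $\GL_n(D)$ via Skolem--Noether, and then to track conjugacy at the level of exponent matrices. Every $k$-algebra isomorphism $M_n(D)\to M_n(D)$ is inner, so $\Gamma\cong\Gamma'$ if and only if $\Gamma'=\xi\Gamma\xi^{-1}$ for some $\xi\in\GL_n(D)$. The decisive first step is to arrange that $\xi$ is a monomial matrix. Both orders contain $\Delta_0\coloneqq\text{diag}(\Delta,\dots,\Delta)$, so $\Gamma$ contains the two copies $\Delta_0$ and $\xi^{-1}\Delta_0\xi$ of the diagonal ring; geometrically these are two frames giving apartments $\mathcal{A}$ and $\xi^{-1}\mathcal{A}$, both containing the polytope $C_\Gamma$. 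I would invoke the fact that $\Gamma^\times$ acts transitively (up to a permutation of the distinguished vertices) on the apartments through $C_\Gamma$, equivalently that $\Delta_0$ and $\xi^{-1}\Delta_0\xi$ are $\Gamma^\times$-conjugate, to produce $u\in\Gamma^\times$ with $u\,\xi^{-1}\Delta_0\xi\,u^{-1}=\Delta_0$. Then $\eta\coloneqq u\xi^{-1}$ normalizes $\Delta_0$, hence is monomial, and since $u\in\Gamma^\times$ we get $\Gamma'=\eta^{-1}\Gamma\eta$, i.e. $\Gamma'$ is conjugate to $\Gamma$ by the monomial matrix $\eta^{-1}$.

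For the forward implication I would then write the monomial matrix as $\eta^{-1}=p_\sigma d$, where $d=(\bm{\pi}^{\beta_i}\delta_{ij})$ is diagonal and $p_\sigma=(\delta_{\sigma(i)j})$ is a permutation matrix. Conjugation by $d$ sends $\mu_{ij}\mapsto\mu_{ij}+\beta_i-\beta_j$, and a direct substitution into $m_{ij\ell}=\mu_{ij}+\mu_{j\ell}-\mu_{i\ell}$ shows the structural invariants are unchanged. Conjugation by $p_\sigma$ sends the exponent matrix $(\mu_{ij})$ to $(\mu_{\sigma(i)\sigma(j)})$ by simultaneous permutation of rows and columns, so the new invariants satisfy $m'_{ij\ell}=\mu_{\sigma(i)\sigma(j)}+\mu_{\sigma(j)\sigma(\ell)}-\mu_{\sigma(i)\sigma(\ell)}=m_{\sigma(i)\sigma(j)\sigma(\ell)}$. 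Composing the two, the structural invariants of $\Gamma'$ are exactly those of $\Gamma$ permuted by $\sigma$, which is the asserted relation.

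For the converse I would use that the structural invariants determine the exponent matrix up to diagonal conjugation. After the gauge change $\mu_{ij}\mapsto\mu_{ij}+c_i-c_j$ normalizing the first column to zero ($\mu_{i1}=0$), one recovers $\mu_{1\ell}=m_{1\ell1}$ and $\mu_{i\ell}=m_{1\ell1}-m_{i1\ell}$; hence two exponent matrices with identical structural invariants differ only by such a diagonal transformation. Thus, given $\sigma$ with $m'_{ij\ell}=m_{\sigma(i)\sigma(j)\sigma(\ell)}$, the order $p_\sigma\Gamma p_\sigma^{-1}$ has the same structural invariants as $\Gamma'$, so the two are conjugate by a diagonal matrix and therefore $\Gamma\cong\Gamma'$.

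The main obstacle is the reduction to monomial conjugation: once $\xi$ is monomial the argument is linear bookkeeping on $(\mu_{ij})$, but replacing an arbitrary $\xi$ by a monomial matrix requires genuine input. I would supply it either through the building axioms (transitivity of $\Gamma^\times$ on apartments containing the chamber inside $C_\Gamma$) or, equivalently and perhaps more self-containedly, through idempotents: the families $\{e_{ii}\}$ and $\{\xi^{-1}e_{ii}\xi\}$ are two complete sets of orthogonal idempotents of the semiperfect order $\Gamma$, and the standard lifting-of-idempotents theorem modulo the Jacobson radical shows they are conjugate by a unit $u\in\Gamma^\times$ up to the permutation matching the isomorphism types of the associated indecomposable projectives. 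That permutation is precisely the $\sigma$ appearing in the monomial matrix, so the two forms of the crux are consistent, and it is exactly here that the substance of the proof lies.
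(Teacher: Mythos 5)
Your proposal is correct, but it is worth noting that the paper does not actually prove Proposition \ref{isom} in-house: it defers entirely to \cite[Proposition 6]{ants}, which in turn rests on the Fujita--Yoshimura criterion \cite{tiledisom} that two tiled orders are isomorphic if and only if they are conjugate by a \emph{monomial} matrix. You have correctly identified that this monomial reduction is the entire substance of the statement, and you supply a self-contained argument for it: the families $\{e_{ii}\}$ and $\{\xi^{-1}e_{ii}\xi\}$ are complete sets of orthogonal idempotents in $\Gamma$, each primitive because $e_{ii}\Gamma e_{ii}\cong\Delta$ is local, and $\Gamma$ is semiperfect as a module-finite algebra over the complete local ring $R$, so the standard conjugacy theorem for such idempotent families produces $u\in\Gamma^\times$ with $u\xi^{-1}e_{ii}\xi u^{-1}=e_{\tau(i)\tau(i)}$, forcing $\eta=u\xi^{-1}$ to be monomial. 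This is cleaner than your alternative building-theoretic phrasing (transitivity of $\Gamma^\times$ on ``apartments through $C_\Gamma$'' is not a building axiom when $C_\Gamma$ is not a chamber, so that version would need its own justification). Once $\xi$ is monomial, your bookkeeping --- invariance of $m_{ij\ell}$ under the gauge change $\mu_{ij}\mapsto\mu_{ij}+\beta_i-\beta_j$ and the permutation rule $m'_{ij\ell}=m_{\sigma(i)\sigma(j)\sigma(\ell)}$ --- reproduces exactly the content of Corollary \ref{movep}, and your converse via the normalization $\mu_{i1}=0$, $\mu_{i\ell}=m_{1\ell 1}-m_{i1\ell}$ is the same recovery of the exponent matrix up to diagonal conjugation that underlies Corollary \ref{translation}. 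So the two routes agree in their computational core; what your version buys is independence from the external references, at the cost of invoking lifting of idempotents, which the citation chain packages for you.
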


\begin{proof} See the proof in  \cite[Proposition 6]{ants} generalizing  to matrices over a division ring.
\end{proof}

Denote by $t_i=t[P_i]$ the types of the distinguished vertices of $\Gamma$. 

\begin{cor}
\label{movep}
Let $\Gamma$ be a tiled order with structural invariants $m_{ij\ell}$ and types of distinguished vertices $t_i$, and let $\xi=(\bm{\pi}^{\alpha_i}\delta_{\sigma(i)j})$ be a monomial matrix. Then $\Gamma' \coloneqq \xi \Gamma \xi^{-1}$ has structural invariants and types  of distinguished vertices 
\[ m'_{ij\ell}=m_{\sigma(i)\sigma(j)\sigma(\ell)} \quad \text{and} \quad t'_i=t(\xi)+t_{\sigma(i)}\] for all $i,j,\ell \le n$, and $\xi [P_{\sigma(i)}]=[P'_i]$ for all $i \le n$.
\end{cor}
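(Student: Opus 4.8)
The plan is to reduce both assertions to a single explicit computation of the exponent matrix of $\Gamma'$. First I would factor the monomial matrix as $\xi = d\,p_\sigma$ with $d = (\bm{\pi}^{\alpha_i}\delta_{ij})$ diagonal and $p_\sigma = (\delta_{\sigma(i)j})$ the permutation matrix, exactly as in the discussion preceding Lemma \ref{type0}. Conjugation by $p_\sigma$ permutes rows and columns of the exponent matrix simultaneously, sending the $(i,j)$ entry of $\Gamma$ to $\mu_{\sigma(i)\sigma(j)}$, while conjugation by $d$ multiplies the $(i,j)$ block $\p^{\mu_{ij}}$ by $\bm{\pi}^{\alpha_i}$ on the left and $\bm{\pi}^{-\alpha_j}$ on the right. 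The key structural fact to invoke here is that $\p = \bm{\pi}\Delta = \Delta\bm{\pi}$ is a two-sided ideal, so $\bm{\pi}^{\alpha_i}\p^{\mu}\bm{\pi}^{-\alpha_j} = \p^{\mu + \alpha_i - \alpha_j}$ with no noncommutativity obstruction. Composing the two steps gives the exponent matrix of $\Gamma'$,
\[
\mu'_{ij} = \mu_{\sigma(i)\sigma(j)} + \alpha_i - \alpha_j.
\]

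The claim on structural invariants is then immediate: substituting into $m'_{ij\ell} = \mu'_{ij} + \mu'_{j\ell} - \mu'_{i\ell}$ and cancelling the telescoping $\alpha$-terms leaves exactly $\mu_{\sigma(i)\sigma(j)} + \mu_{\sigma(j)\sigma(\ell)} - \mu_{\sigma(i)\sigma(\ell)} = m_{\sigma(i)\sigma(j)\sigma(\ell)}$. This is consistent with the Zassenhaus criterion of Proposition \ref{isom}, which already guarantees that \emph{some} permutation realizes the isomorphism $\Gamma' \cong \xi\Gamma\xi^{-1}$; the computation pins that permutation down to be $\sigma$ itself.

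For the distinguished vertices I would read off the $i$-th column of $M_{\Gamma'}$, whose $k$-th entry is $\mu'_{ki} = \mu_{\sigma(k)\sigma(i)} + \alpha_k - \alpha_i$, so that $[P'_i] = [\dots, \mu_{\sigma(k)\sigma(i)} + \alpha_k - \alpha_i, \dots]$. Separately I would compute the action of $\xi$ on an arbitrary apartment vertex: writing lattices as right $\Delta$-modules $\bigoplus_k e_k\p^{n_k}$ (so that the left matrix action of $\GL_n(D)$ commutes with the scalar $\Delta$-structure) and using $\xi e_j = \bm{\pi}^{\alpha_{\sigma^{-1}(j)}}e_{\sigma^{-1}(j)}$, I obtain
\[
\xi[n_1,\dots,n_n] = [\alpha_1 + n_{\sigma(1)},\dots,\alpha_n + n_{\sigma(n)}].
\]
Applying this to $[P_{\sigma(i)}]$, whose $k$-th coordinate is $\mu_{k,\sigma(i)}$, produces a vertex with $k$-th coordinate $\alpha_k + \mu_{\sigma(k)\sigma(i)}$, which differs from $[P'_i]$ only by the global constant $\alpha_i$. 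Since these are homothety classes and $[m_1,\dots,m_n] = [m_1+c,\dots,m_n+c]$, this shift is absorbed and $\xi[P_{\sigma(i)}] = [P'_i]$. The statement on types then follows formally from Equation (\ref{def types}): $t'_i = t[P'_i] = t(\xi[P_{\sigma(i)}]) \equiv t(\xi) + t_{\sigma(i)} \pmod{n}$.

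I expect the main obstacle to be bookkeeping rather than anything conceptual: keeping the left/right module conventions straight so that $\xi$ and the $\Delta$-action genuinely commute, and—most easily gotten wrong—tracking the direction of the permutation ($\sigma$ versus $\sigma^{-1}$) through the column indexing, so that it is $\xi[P_{\sigma(i)}]$, and not $\xi[P_i]$, that lands on $[P'_i]$. The deliberate appearance of the harmless constant shift $\alpha_i$ in the vertex comparison is the one place where one must remember that we are comparing homothety classes and not literal lattices.
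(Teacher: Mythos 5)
Your proposal is correct, and the core computation is the same as the paper's: both establish $\mu'_{ij}=\alpha_i-\alpha_j+\mu_{\sigma(i)\sigma(j)}$ by a direct calculation and then read off the structural invariants by telescoping the $\alpha$'s. The one place you genuinely diverge is the verification that $\xi[P_{\sigma(i)}]=[P'_i]$. The paper routes this through the maximal orders attached to the distinguished vertices: it writes down the exponent matrices of $\Lambda_{\sigma(\ell)}=\End_\Delta(P_{\sigma(\ell)})$ and $\Lambda'_\ell$ using \cite[Corollary 2.3]{poly} and checks $\xi\Lambda_{\sigma(\ell)}\xi^{-1}=\Lambda'_\ell$ by another conjugation computation. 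You instead compute the action of $\xi$ on apartment coordinates directly, $\xi[n_1,\dots,n_n]=[\alpha_1+n_{\sigma(1)},\dots,\alpha_n+n_{\sigma(n)}]$, and compare with the columns of $M_{\Gamma'}$ up to the homothety shift by $\alpha_i$. Your route is slightly more elementary (it avoids the external citation and a second matrix conjugation) and makes the $\sigma$ versus $\sigma^{-1}$ bookkeeping explicit, which is exactly where the statement is easiest to get wrong; the paper's route has the advantage of staying entirely inside the order-theoretic language, so the lattice--order dictionary never has to be invoked. Both correctly conclude the type formula from Equation (\ref{def types}). The only caution for your version is the left/right convention you already flag: the paper's apartment vertices are written as left $\Delta$-combinations $\Delta\bm{\pi}^{m_i}v_i$ while its maximal orders are endomorphism rings of right $\Delta$-lattices, so one should fix one convention and use $\bm{\pi}\Delta=\Delta\bm{\pi}$ to move the scalars past the basis vectors, exactly as you do.
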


\begin{proof}
Let $\Gamma=(\p^{\mu_{ij}})$. A direct calculation shows that $\xi \Gamma \xi^{-1}=(\p^{\alpha_i-\alpha_j+\mu_{\sigma(i)\sigma(j)}})$ and the claim about the structural invariants  follows easily. 

Let $\Lambda_{\sigma(\ell)}$ and $\Lambda'_{\ell}$ be the maximal orders corresponding to the distinguished vertices  $[P_{\sigma(\ell)}]=[\mu_{i\sigma(\ell)}]$ and $[P'_\ell]=[\alpha_i-\alpha_\ell+\mu_{\sigma(i)\sigma(\ell)}]$. By \cite[Corollary 2.3]{poly}, $\Lambda_{\sigma(\ell)}=(\p^{\mu_{i\sigma(\ell)}-\mu_{j\sigma(\ell)}})$ and  $\Lambda'_{\ell}=(\p^{\alpha_i-\alpha_j+\mu_{\sigma(i)\sigma(\ell)}-\mu_{\sigma(j)\sigma(\ell)}})$. Another direct calculation shows that $\xi \Lambda_{\sigma(\ell)}\xi^{-1}=(\p^{\alpha_i-\alpha_j+\mu_{\sigma(i)\sigma(\ell)}-\mu_{\sigma(j)\sigma(\ell)}})=\Lambda'_\ell$.
The claim about the types follows from Equation (\ref{def types}).
\end{proof}

As seen in \cite[Proposition 5]{ants}, the structural invariants completely determine the geometry of $C_\Gamma$. In particular, if for two tiled orders $\Gamma$ and $\Gamma'$ we have $m_{ijk}=m'_{\sigma(i)\sigma(j)\sigma(k)}$, we will say that $C_\Gamma$ and $C_{\Gamma'}$ are \textit{congruent}. For example, all three polytopes in Figure \ref{general} are congruent. One particular case is when the structural invariants of two polytopes agree.

\begin{cor}
\label{translation}
Let $\Gamma, \Gamma' \subset M_n(D)$ be two tiled orders containing 
$\text{diag}(\Delta, \Delta, \dots, \Delta)$, and let $m_{ij\ell}$ and  $m'_{ij\ell}$ be their structural invariants. If  $m'_{ij\ell}=m_{ij\ell}$ for all $1\le i,j,\ell\le n$, then $C_\Gamma$ is a translation of $C_{\Gamma'}$.
\end{cor}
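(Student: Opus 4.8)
The plan is to show that equality of the structural invariants forces the two exponent matrices to differ by a ``coboundary'', and that passing from $(\mu_{ij})$ to such a modification is precisely a rigid translation of the defining hyperplanes. Write $M_\Gamma=(\mu_{ij})$ and $M_{\Gamma'}=(\mu'_{ij})$, and set $d_{ij}\coloneqq \mu'_{ij}-\mu_{ij}$. Since $m_{ij\ell}=\mu_{ij}+\mu_{j\ell}-\mu_{i\ell}$, the hypothesis $m'_{ij\ell}=m_{ij\ell}$ for all $i,j,\ell$ is equivalent to the cocycle relation $d_{i\ell}=d_{ij}+d_{j\ell}$ for all $i,j,\ell\le n$. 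Moreover $\mu_{ii}=\mu'_{ii}=0$ gives $d_{ii}=0$, and taking $\ell=i$ in the cocycle relation yields $d_{ji}=-d_{ij}$.

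The first real step is to solve this cocycle explicitly. Define $a_i\coloneqq d_{i1}=\mu'_{i1}-\mu_{i1}\in\Z$ for $1\le i\le n$. Setting $j=1$ in $d_{i\ell}=d_{i1}+d_{1\ell}$ and using $d_{1\ell}=-d_{\ell 1}=-a_\ell$, we obtain $d_{i\ell}=a_i-a_\ell$ for all $i,\ell$; that is,
\[
\mu'_{ij}=\mu_{ij}+a_i-a_j\qquad\text{for all }1\le i,j\le n.
\]
Thus the only freedom permitted by equal structural invariants is an additive shift of the rows and columns by a single integer vector $(a_1,\dots,a_n)$.

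It then remains to read off the geometry. Consider the translation $T\colon x\mapsto x+(a_1,\dots,a_n)$ of the ambient space $\R^{n-1}$ of the apartment $\mathcal{A}$, where coordinates are taken modulo the all-ones direction, so that $T$ is well defined. For any point $x$ one has $(Tx)_i-(Tx)_j=(x_i-x_j)+(a_i-a_j)$, so $T$ carries the bounding hyperplane $H_{ij}\colon x_i-x_j=\mu_{ij}$ of $C_\Gamma$ onto the hyperplane $x_i-x_j=\mu_{ij}+a_i-a_j=\mu'_{ij}$, which is exactly the bounding hyperplane $H'_{ij}$ of $C_{\Gamma'}$. Since $C_\Gamma$ and $C_{\Gamma'}$ are cut out by these two families of hyperplanes, $T(C_\Gamma)=C_{\Gamma'}$. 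Equivalently, one may argue on distinguished vertices: the $j$th column of $M_{\Gamma'}$ is $[\mu_{1j}+a_1-a_j,\dots,\mu_{nj}+a_n-a_j]=[\mu_{1j}+a_1,\dots,\mu_{nj}+a_n]$ as a homothety class, so each $[P'_j]$ is the translate of $[P_j]$ by the same vector $(a_1,\dots,a_n)$.

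The computation is routine; the only genuine content is recognizing the hypothesis as a cocycle condition whose solution is a coboundary $d_{ij}=a_i-a_j$. The single point needing care is that the shift vector must be the \emph{same} for every hyperplane (equivalently, every distinguished vertex), which is exactly what solving the cocycle guarantees: had the $d_{ij}$ failed to be a coboundary, the two polytopes could have been congruent without being translates of one another.
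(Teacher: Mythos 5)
Your proof is correct, but it takes a different and more self-contained route than the paper. The paper's own proof leans on the \emph{proof} of Proposition \ref{isom} (i.e., the argument from the cited reference): that proof produces a monomial matrix $\xi=(\bm{\pi}^{\alpha_i}\delta_{\sigma(i)j})$ conjugating $\Gamma$ to $\Gamma'$, and when $\sigma$ may be taken to be the identity the matrix is diagonal, hence acts on the apartment by a translation. You instead bypass the conjugating matrix entirely: you observe that $m'_{ij\ell}=m_{ij\ell}$ is exactly the cocycle condition $d_{i\ell}=d_{ij}+d_{j\ell}$ for $d_{ij}=\mu'_{ij}-\mu_{ij}$, solve it explicitly as the coboundary $d_{ij}=a_i-a_j$ with $a_i=\mu'_{i1}-\mu_{i1}$, and then check directly that translation by $(a_1,\dots,a_n)$ carries each bounding hyperplane $x_i-x_j=\mu_{ij}$ onto $x_i-x_j=\mu'_{ij}$ (and each distinguished vertex onto its counterpart). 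The two arguments have the same underlying content --- the difference of exponent matrices is a coboundary, which is the same as saying the conjugator is diagonal --- but yours makes the translation vector explicit and does not depend on unpacking the proof of the isomorphism criterion from an external source, at the cost of a slightly longer computation. Both are valid; your version is arguably the more transparent one to read in isolation.
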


\begin{proof}
In the proof of Proposition \ref{isom}, we obtain the matrix $\xi=(\bm{\pi}^{\alpha_i}\delta_{\sigma(i)j})$ where  $\xi\Gamma\xi^{-1}=\Gamma'$ and $m_{ij\ell}=m'_{\sigma(i)\sigma(j)\sigma(\ell)}$ for all $i,j,\ell \le n$. If $m'_{ij\ell}=m_{ij\ell}$, we take $\sigma$ to be the identity permutation, and the resulting matrix $\xi$ is a diagonal matrix. Since diagonal matrices act on the apartment by translations,  $\xi$ will translate $C_\Gamma$ to $C_{\Gamma'}$.
\end{proof}

\begin{example} Let $\Gamma$ and $\Gamma'$ be the tiled orders in Example \ref{ex1}, where $\Gamma$ has  structural invariants $$(m_{123}, m_{132}, m_{213}, m_{231}, m_{312}, m_{321})=(1, 1, 0, 1, 0, 1),$$ and $\Gamma'$ has structural invariants $$(m'_{123}, m'_{132}, m'_{213}, m'_{231}, m'_{312}, m'_{321})=(1, 0, 1, 0, 1, 1).$$  Then $m'_{ij\ell}=m_{\sigma(i)\sigma(j)\sigma(\ell)}$ for $\sigma=(13)$, so the two orders are isomorphic. Indeed, we notice that $C_\Gamma$ depicted in Figure \ref{general} in blue has the same shape (and size) as $C_{\Gamma'}$ depicted in green, and the matrix $\xi$ in Example \ref{ex1} gives the desired isomorphism. 
\end{example}

Structural invariants, by encoding isomorphism classes of tiled orders, also encode information about their normalizer.

\begin{prop}[Proposition 7,  \cite{ants}]
\label{thmants}
Let $\Gamma=(\p^{\mu_{ij}})$ be a tiled order and $\{m_{ij\ell} \, | \, i,j, \ell \le n\}$ its set of structural invariants. Then 
\begin{equation*}
\mathcal{N}(\Gamma)=\bigcup_{\xi_\sigma \in \widetilde{H}}\xi_{\sigma}D^\times\Gamma^\times,
\end{equation*} where $\widetilde{H}\coloneqq \{\xi_\sigma=(\bm{\pi}^{\mu_{i1}-\mu_{\sigma(i)\sigma(1)}}\delta_{\sigma(i)j}) \, | \, \sigma \in H\}$ and $H$ is the subgroup of $S_n$ given by $H=\{\sigma \in S_n \, | \, m_{ij\ell}=m_{\sigma(i)\sigma(j)\sigma(\ell)} \,\, \text{for all} \,\, i,j,\ell \le n\}$.

\end{prop}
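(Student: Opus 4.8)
The plan is to prove the two inclusions separately, using throughout the dictionary between homothety classes of lattices, maximal orders, and vertex stabilizers set up in the previous subsections, together with the reduced-norm/type bookkeeping of Lemma \ref{teqn}. The one genuinely new ingredient I will need is the identification of the common stabilizer of all distinguished vertices, namely $\bigcap_i \mathcal{N}(\Lambda_i) = D^\times \Gamma^\times$, where $\Lambda_i$ is the maximal order at $[P_i]$.

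For the inclusion $\bigcup_{\sigma \in H} \xi_\sigma D^\times \Gamma^\times \subseteq \mathcal{N}(\Gamma)$, it suffices to check $\xi_\sigma \in \mathcal{N}(\Gamma)$ for each $\sigma \in H$, since $\Gamma^\times \subseteq \mathcal{N}(\Gamma)$ trivially and $D^\times \subseteq \mathcal{N}(\Gamma)$ because $\bm{\pi}\Delta$ is the unique maximal two-sided ideal of $\Delta$, so $x \p^\mu x^{-1} = \p^\mu$ for all $x \in D^\times$, whence $x \Gamma x^{-1} = \Gamma$. By Corollary \ref{movep}, $\xi_\sigma \Gamma \xi_\sigma^{-1} = (\p^{\alpha_i - \alpha_j + \mu_{\sigma(i)\sigma(j)}})$ with $\alpha_i = \mu_{i1} - \mu_{\sigma(i)\sigma(1)}$. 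I would then verify the desired identity $\alpha_i - \alpha_j + \mu_{\sigma(i)\sigma(j)} = \mu_{ij}$ by rewriting it, after substituting the $\alpha$'s, as $m_{\sigma(i)\sigma(j)\sigma(1)} = m_{ij1}$; this holds precisely because $\sigma \in H$. Hence $\xi_\sigma \Gamma \xi_\sigma^{-1} = \Gamma$.

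For the reverse inclusion, take $\eta \in \mathcal{N}(\Gamma)$. The distinguished vertices $[P_1], \dots, [P_n]$ are intrinsic to $\Gamma$ (the homothety classes of its columns, the extremal distinguished points of $C_\Gamma$), so $\{\eta \Lambda_i \eta^{-1}\}$ is again the set of distinguished maximal orders of $\Gamma$, and conjugation by $\eta$ permutes them, say $\eta[P_i] = [P_{\tau(i)}]$. Since conjugation by $\eta$ is a ring automorphism of $\Gamma$ inducing $\tau$ on the distinguished vertices, the isomorphism criterion (Proposition \ref{isom}) applied to this automorphism forces $m_{ij\ell} = m_{\tau(i)\tau(j)\tau(\ell)}$, that is $\tau \in H$. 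The monomial matrix $\xi_{\tau^{-1}} \in \widetilde{H}$ induces the same permutation $\tau$ on distinguished vertices by Corollary \ref{movep}, so $g \coloneqq \xi_{\tau^{-1}}^{-1}\eta$ fixes every $[P_i]$. A matrix fixes $[P_i]$ iff it normalizes $\Lambda_i$, so $g \in \bigcap_i \mathcal{N}(\Lambda_i) = \bigcap_i D^\times \Lambda_i^\times$ by Proposition \ref{maxnorm}. Writing $g = a_i u_i$ with $a_i \in D^\times$ and $u_i \in \Lambda_i^\times$, I would apply $v_D \circ \det$: since $v_D(\det u_i) = v(\nr(u_i)) = 0$ (as $\nr(\Lambda_i^\times) = R^\times$) and $v_D(\det a_i) = n\, v_D(a_i)$, all the $a_i$ share the valuation $v_D(\det g)/n$. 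Hence $a_1^{-1}a_i \in \Delta^\times$, so $a_1^{-1}g \in \Lambda_i^\times$ for every $i$, i.e. $a_1^{-1}g \in \bigcap_i \Lambda_i^\times = \Gamma^\times$ and $g \in D^\times \Gamma^\times$. Therefore $\eta = \xi_{\tau^{-1}} g \in \xi_{\tau^{-1}} D^\times \Gamma^\times$ with $\tau^{-1} \in H$, completing the inclusion.

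The main obstacle I anticipate is the common-stabilizer step: passing from $g \in \bigcap_i D^\times \Lambda_i^\times$ (a nonlinear intersection of products) to $g \in D^\times \Gamma^\times$ is not formal, and the crucial input is that the scalar parts $a_i$ can be forced to share a single valuation — exactly what the type/Dieudonn\'{e}-determinant computation of Lemma \ref{teqn} supplies. A secondary point requiring care is justifying cleanly that the induced permutation $\tau$ lies in $H$; I would route this through Proposition \ref{isom} rather than recomputing structural invariants by hand.
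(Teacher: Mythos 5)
The paper does not actually prove this proposition --- it is imported verbatim from \cite{ants} (Proposition 7 there, proved for the split case $M_n(k)$ and asserted here for $M_n(D)$) --- so there is no in-text argument to compare against; your proposal is in effect supplying the proof, including the division-algebra generalization. On its merits the argument is essentially correct and follows the natural route: the forward inclusion via the identity $\alpha_i-\alpha_j+\mu_{\sigma(i)\sigma(j)}=\mu_{ij}\iff m_{\sigma(i)\sigma(j)\sigma(1)}=m_{ij1}$ checks out, and the common-stabilizer step --- which you rightly flag as the real content --- is handled correctly: the Dieudonn\'e-determinant bookkeeping forces all the scalar parts $a_i$ in the decompositions $g=a_iu_i\in D^\times\Lambda_i^\times$ to share the valuation $v_D(\det g)/n$, whence $a_1^{-1}g\in\bigcap_i\Lambda_i^\times=\Gamma^\times$ (using that $\Gamma=\bigcap_i\Lambda_i$ over the \emph{distinguished} maximal orders, which holds since $\max_\ell(\mu_{i\ell}-\mu_{k\ell})=\mu_{ik}$, attained at $\ell=k$).

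Two points need tightening. First, the deduction that the induced permutation $\tau$ lies in $H$ does not follow from Proposition \ref{isom} as stated: that proposition only asserts the \emph{existence} of some $\sigma$ with $m_{ij\ell}=m_{\sigma(i)\sigma(j)\sigma(\ell)}$, which for $\Gamma'=\Gamma$ is vacuous ($\sigma=\mathrm{id}$ works). You need the refined fact that the permutation induced on the distinguished projectives by an automorphism necessarily preserves the structural invariants --- true because $m_{ij\ell}$ has the intrinsic description as the exponent measuring the cokernel of the composition $\Hom_\Gamma(P_\ell,P_j)\times\Hom_\Gamma(P_j,P_i)\to\Hom_\Gamma(P_\ell,P_i)$ --- but that is the content of the \emph{proof} of Proposition \ref{isom} in \cite{ants}, not of its statement. (Alternatively, one can restructure: first reduce $\eta$ modulo the stabilizer of all $[P_i]$ to a monomial matrix $\zeta\in\mathcal N(\Gamma)$, and then read off $\sigma\in H$ from Corollary \ref{movep}.) Second, your appeal to $\mathcal N(\Lambda_i)=D^\times\Lambda_i^\times$ and to $\Delta^\times\subseteq\Lambda_i^\times$ uses that $D^\times$ is \emph{not} central in $\GL_n(D)$, so these identities are not automatic for an arbitrary maximal order; they do hold here because every $\Lambda_i$ is conjugate to $M_n(\Delta)$ by a diagonal matrix in powers of $\bm{\pi}$, which commutes with the diagonal copy of $\bm{\pi}$ --- worth a sentence. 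Neither issue is fatal, but both should be made explicit.
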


Geometrically,  by Corollary \ref{movep} the elements $\xi_\sigma$ in Proposition \ref{thmants} permute the distinguished vertices and therefore give a symmetry of $C_\Gamma$.


%

\section{Tiled orders and the norm of their normalizers}
\label{tiled loc}

Our goal is to compute $\nr(\mathcal{N}(\Gamma))$ for any given tiled order $\Gamma$. Proposition \ref{thmants} already gives us a naive  algorithm for finding $\mathcal{N}(\Gamma)$, and the algorithm in \cite{ants} allows us to speed up the process in some cases, although  the algorithm could still take factorial time for some   tiled orders.  In this section, we investigate $\nr(\mathcal{N}(\Gamma))$ using the building-theoretic framework, which allows us to bypass the task of finding the normalizer when the degree $n$ of the algebra $M_n(D)$ is prime. However, as seen in Algorithm \ref{alg1}, the composite case  still involves finding $\mathcal{N}(\Gamma)$.

\subsection{Algebraic considerations}

We start by describing $\nr(\mathcal{N}(\Gamma))$ algebraically.  Let $\Gamma=(\p^{\mu_{ij}})$ be a tiled order in $A=M_n(D)$ with associated convex polytope $C_\Gamma$.   Denote the structural invariants of $\Gamma$ by $m_{ij\ell}$, let $\widetilde{H}\coloneqq \{\xi_\sigma=(\bm{\pi}^{\mu_{i1}-\mu_{\sigma(i)\sigma(1)}}\delta_{\sigma(i)j}) \, | \, \sigma \in H\}$ where  $H=\{\sigma \in S_n \, | \, m_{ij\ell}=m_{\sigma(i)\sigma(j)\sigma(\ell)} \,\, \text{for all} \,\, i,j,\ell \le n\}$.
   Proposition \ref{thmants} gives that $\displaystyle \mathcal{N}(\Gamma)=\bigcup_{\xi_\sigma \in \widetilde{H}}\xi_{\sigma}D^\times\Gamma^\times$, so 
\[
\nr_{A/k}(\mathcal{N}(\Gamma))=\bigcup_{\xi_\sigma \in \widetilde{H}}\nr_{A/k}(\xi_\sigma D^\times\Gamma^\times)=\bigcup_{\xi_\sigma \in \widetilde{H}} \nr_{A/k}(\xi_\sigma)\nr_{A/k}(D^{\times})\nr_{A/k}(\Gamma^\times).\]

From now on, we supress the subscript under the reduced norm and denote $\nr = \nr_{A/k}$ when the context is obvious.

\begin{prop}
\label{typesnrd}
With the notation above,  $\nr(\mathcal{N}(\Gamma))=(k^\times)^dR^\times$, where $d\Z/n\Z = \{t(\xi_\sigma): \xi_\sigma \in \widetilde{H}\}$.
\end{prop}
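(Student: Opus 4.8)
The plan is to compute $\nr(\mathcal{N}(\Gamma))$ directly from the decomposition $\mathcal{N}(\Gamma)=\bigcup_{\xi_\sigma \in \widetilde{H}}\xi_{\sigma}D^\times\Gamma^\times$ furnished by Proposition \ref{thmants}, together with the multiplicativity relation already displayed just before the statement,
\[
\nr(\mathcal{N}(\Gamma))=\bigcup_{\xi_\sigma \in \widetilde{H}} \nr(\xi_\sigma)\,\nr(D^{\times})\,\nr(\Gamma^\times).
\]
First I would pin down the three factors on the right. By Equation (\ref{nrD}) together with $\nr_{D/k}(D^\times)=k^\times$ (page 153 of \cite{reiner}, as used in Corollary \ref{max order norm}), we have $\nr(D^\times)=(k^\times)^n$. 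For the unit factor, since $\Gamma$ contains $\text{diag}(\Delta,\dots,\Delta)$ and is contained in $M_n(\Delta)$ up to the tiling, one checks $\Gamma^\times \subseteq \GL_n(\Delta)$ and $\GL_n(\Delta)\subseteq \Gamma^\times$ on the diagonal block so that $\nr(\Gamma^\times)=R^\times$, using Equation (\ref{deltanrd}); the containment $R^\times \subseteq \nr(\Gamma^\times)$ follows because the diagonal $\GL_1(\Delta)=\Delta^\times$ entries already surject onto $R^\times$ under $\nr_{D/k}$.

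The heart of the argument is the first factor. The key observation is that $\nr$ and the type $t$ are linked by Lemma \ref{teqn}: for a monomial matrix $\xi_\sigma$ we have $t(\xi_\sigma)\equiv v(\nr(\xi_\sigma)) \pmod n$. So I would translate the union over $\xi_\sigma \in \widetilde{H}$ of the valuations $v(\nr(\xi_\sigma))$ into the set of types $\{t(\xi_\sigma):\xi_\sigma\in\widetilde H\}$. The point is that, modulo $(k^\times)^n R^\times$, the reduced norm of a monomial matrix is determined by its type: if $t(\xi_\sigma)\equiv e \pmod n$, then $v(\nr(\xi_\sigma))=e+n\ell$ for some $\ell\in\Z$, so $\nr(\xi_\sigma)=\pi^e \cdot u \cdot (\text{an }n\text{th power})$ with $u\in R^\times$, whence $\nr(\xi_\sigma)\,(k^\times)^n R^\times=\pi^e\,(k^\times)^n R^\times$. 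Therefore
\[
\nr(\mathcal{N}(\Gamma))=\bigcup_{\xi_\sigma\in\widetilde H}\pi^{\,t(\xi_\sigma)}\,(k^\times)^n R^\times,
\]
and this is exactly the subgroup of $k^\times/(k^\times)^n R^\times$ generated by the residues $t(\xi_\sigma)$.

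Finally I would exploit the group structure. Because $\widetilde H$ arises from the subgroup $H\le S_n$ and $\mathcal{N}(\Gamma)$ is a group, $\{t(\xi_\sigma):\xi_\sigma\in\widetilde H\}$ is a subgroup of $\Z/n\Z$ (the type is additive along products of monomial matrices by the formula $t(\xi)\equiv\sum_i\alpha_i$ and multiplicativity of the Dieudonn\'e determinant). Every subgroup of the cyclic group $\Z/n\Z$ has the form $d\Z/n\Z$ for a unique $d\mid n$, so writing $\{t(\xi_\sigma)\}=d\Z/n\Z$ gives $\bigcup_{\xi_\sigma}\pi^{t(\xi_\sigma)}(k^\times)^nR^\times=(k^\times)^d R^\times$, since $\pi^d$ and $(k^\times)^n\subseteq(k^\times)^d$ together generate precisely $(k^\times)^d R^\times$. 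This yields $\nr(\mathcal{N}(\Gamma))=(k^\times)^d R^\times$ as claimed. I expect the main obstacle to be the bookkeeping in the reduction ``norm determined by type modulo $(k^\times)^nR^\times$'': one must verify carefully that the $R^\times$ and $n$th-power ambiguities in $\nr(\xi_\sigma)$ do not enlarge the group beyond $(k^\times)^dR^\times$, and that the set of types is genuinely closed under addition (i.e.\ is a subgroup, not merely a subset) so that the clean description $d\Z/n\Z$ is legitimate.
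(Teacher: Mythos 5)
Your proposal is correct and follows essentially the same route as the paper: the same decomposition from Proposition \ref{thmants}, the same evaluations $\nr(D^\times)=(k^\times)^n$ and $\nr(\Gamma^\times)=R^\times$, and the same identification of $\nr(\mathcal{N}(\Gamma))/(k^\times)^nR^\times$ with the set of types $\{t(\xi_\sigma)\}$ inside $\Z/n\Z$. The one point where you diverge is the closure step. The paper proves directly that $\{t(\xi_\sigma):\xi_\sigma\in\widetilde{H}\}$ is closed under addition using Corollary \ref{movep}, which gives $t(\xi_\sigma)\equiv t(P_1)-t(P_{\sigma(1)})\pmod n$ and hence $t(\xi_{\tau\sigma})\equiv t(\xi_\tau)+t(\xi_\sigma)\pmod n$ for $\sigma,\tau\in H$; you instead appeal to the group structure of $\mathcal{N}(\Gamma)$ together with additivity of the type. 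Your route does work, but note that a product $\xi_\sigma\xi_\tau$ of elements of $\widetilde{H}$ need not itself lie in $\widetilde{H}$; to close the argument you should observe that by Proposition \ref{thmants} it lies in some coset $\xi_\rho D^\times\Gamma^\times$ with $\rho\in H$, and that every element of $D^\times\Gamma^\times$ has type $\equiv 0\pmod n$ (since $\nr(D^\times)=(k^\times)^n$ and $\nr(\Gamma^\times)=R^\times$), so $t(\xi_\sigma)+t(\xi_\tau)=t(\xi_\sigma\xi_\tau)=t(\xi_\rho)$ lands back in the set. Equivalently, since $\nr(\mathcal{N}(\Gamma))$ is a group and equals the union of the cosets $\pi^{t(\xi_\sigma)}(k^\times)^nR^\times$, the set of residues $\{t(\xi_\sigma)\}$ is forced to be a subgroup of $\Z/n\Z$. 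This is exactly the point you flagged as needing care, and it is the only detail separating your sketch from a complete proof.
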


\begin{proof}

 By equation (\ref{nrD}) and the fact that $\nr_{D/k}(D)=k$ (see page 153 in \cite{reiner}), we get $\nr_{A/k}(D^\times)=(k^\times)^n$. Since $\Gamma$ contains the ring $diag(\Delta, 1, 1, \dots,1)$ and is moreover an intersection of maximal orders, each of them conjugate to $M_n(\Delta)$,  by Equation (\ref{deltanrd}) we get   $\nr(\Gamma^\times)=R^\times$.

Therefore 
\begin{equation}
\label{unioneqn}
\nr (\mathcal{N}(\Gamma))=\bigcup_{\xi_\sigma \in \widetilde{H}} \nr (\xi_\sigma)(k^\times)^nR^\times.
\end{equation}

Since the identity permutation $e \in S_n$ gives $\xi_{e}=I_n$ the $n \times n$ identity matrix,  $(k^\times)^nR^\times \subseteq \mathcal{N}(\Gamma)$ and we have nested subgroups $$(k^\times)^nR^\times \subseteq \nr (\mathcal{N}(\Gamma)) \subseteq k^\times.$$  

The valuation $v$ on the field $k$  induces a homomorphism 
\begin{align*}
\hspace{3cm} \nr(\mathcal{N}(\Gamma)) & \rightarrow  \Z/n\Z& \\
  x & \mapsto  v(x) \pmod{n}
\end{align*}
 with kernel $(k^\times)^nR^\times$. Therefore,  $\nr(\mathcal{N}(\Gamma))/(k^\times)^nR^\times$ has the structure of a subgroup of $\Z/n\Z$. By Equation (\ref{unioneqn}), Lemma \ref{teqn}, and the definition of the type of a matrix, we have  $\nr(\mathcal{N}(\Gamma))/(k^\times)^nR^\times =  \langle t(\xi_\sigma): \xi_\sigma \in \widetilde{H} \rangle  \subset \Z/n\Z$.  All we have left to show is that the set  $\{ t(\xi_\sigma): \xi_\sigma \in \widetilde{H} \}$ forms a subgroup of $\Z/n\Z$.

 By Corollary (\ref{movep}), $\xi_\sigma \cdot [P_{\sigma(i)}]=[P_i]$ for all $i \le n$, so $t(\xi_\sigma)\equiv t(P_i)-t(P_{\sigma(i)}) \pmod{n}$ for all $i \le n$.   In particular, $t(\xi_\sigma)\equiv t(P_1)-t(P_{\sigma(1)}) \pmod{n}$. But then $t(\xi_\tau) \equiv t(P_{\sigma(1)})-t(P_{\tau\sigma(1)}) \pmod{n}$ for any other $\xi_\tau \in \widetilde{H}$. Thus
\begin{equation}
\label{types additive}
t(\xi_{\tau\sigma})\equiv t(P_1)-t(P_{\tau \sigma(1)}) \equiv t(\xi_\tau)+t(\xi_\sigma) \pmod{n} \quad \text{for all} \quad \xi_\sigma, \xi_\tau, \xi_{\sigma\tau} \in \widetilde{H}.
\end{equation}

Since $H$ is a subgroup of $S_n$, $\{t(\xi_\sigma): \xi_\sigma \in \widetilde{H}\}$ is closed under addition, contains the identity $t(\xi_e)\equiv 0 \pmod{n}$ and has inverses $-t(\xi_\sigma)\equiv t(\xi_{\sigma^{-1}}) \pmod{n}$.  
\end{proof}

To determine the exponent $d$ above, it is enough to find those $\xi_\sigma \in \widetilde{H}$ with $t(\xi_\sigma) \not\equiv 0 \pmod{n}$. In particular, we don't need to consider the following  subset of $\mathcal{N}(\Gamma)$.

\begin{lem}
\label{disjoint cycles}
Let $\Gamma \subseteq M_n(D)$ be a tiled order, and $\xi_\sigma \in \widetilde{H}$ have associated permutation $\sigma \in H$. Let $\sigma=\sigma_1\sigma_2\dots\sigma_s$ be a decomposition into disjoint cycles of length $l_1, l_2, \dots, l_s$. If any of the cycles $\sigma_i$ has length $l_i$ with $\gcd(l_i,n)=1$, then $t(\xi_\sigma) \equiv 0 \pmod{n}$.
\end{lem}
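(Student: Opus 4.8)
The plan is to exploit the single relation
\[
t(\xi_\sigma) \equiv t(P_r) - t(P_{\sigma(r)}) \pmod{n},
\]
valid for \emph{every} index $r \le n$, which was established in the proof of Proposition \ref{typesnrd} by way of Corollary \ref{movep}: since $\xi_\sigma$ normalizes $\Gamma$ it sends $[P_{\sigma(r)}]$ to $[P_r]$, and Equation (\ref{def types}) records the resulting effect on types. The crucial observation is that $t(\xi_\sigma)$ is a single constant modulo $n$ which equals the ``type drop'' $t(P_r)-t(P_{\sigma(r)})$ regardless of which index $r$ we plug in.

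First I would isolate the cycle $\sigma_i$ of length $l_i$ with $\gcd(l_i,n)=1$ guaranteed by hypothesis, writing it as $\sigma_i = (a_0\, a_1\, \cdots\, a_{l_i-1})$ with $\sigma(a_k) = a_{k+1}$ and indices read modulo $l_i$ (so $a_{l_i} = a_0$). Setting $c \coloneqq t(\xi_\sigma)$, the displayed relation specialized to $r = a_k$ gives $t(P_{a_{k+1}}) \equiv t(P_{a_k}) - c \pmod{n}$ for each $k$. Telescoping these congruences around the cycle yields $t(P_{a_k}) \equiv t(P_{a_0}) - kc \pmod{n}$, and closing the loop at $k = l_i$ produces $t(P_{a_0}) \equiv t(P_{a_0}) - l_i c \pmod{n}$, hence
\[
l_i \, t(\xi_\sigma) \equiv 0 \pmod{n}.
\]

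Finally, since $\gcd(l_i,n)=1$ the integer $l_i$ is invertible modulo $n$, so multiplying the previous congruence by its inverse forces $t(\xi_\sigma) \equiv 0 \pmod{n}$, as claimed. I do not anticipate a serious obstacle: the entire argument is a telescoping computation confined to one disjoint cycle. The only point that genuinely requires care is verifying that the ``type drop'' $t(\xi_\sigma)$ is truly independent of the index $r$, so that the $l_i$ contributions accumulate coherently around the cycle rather than interfering in an uncontrolled way; this uniformity is exactly what Corollary \ref{movep} supplies, and it is the hinge of the proof.
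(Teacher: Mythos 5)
Your proof is correct and is essentially the paper's argument in unrolled form: the paper obtains the same congruence $l_i\,t(\xi_\sigma)\equiv 0\pmod{n}$ by noting that $\sigma^{l_i}$ fixes a point moved by $\sigma_i$ (hence $t(\xi_{\sigma^{l_i}})\equiv 0$) and invoking the additivity $t(\xi_{\sigma^{l_i}})\equiv l_i\,t(\xi_\sigma)$ from Equation (\ref{types additive}), whereas you telescope the type-drop relation $t(\xi_\sigma)\equiv t(P_r)-t(P_{\sigma(r)})$ directly around the cycle. Both arguments rest on exactly the same input from Corollary \ref{movep} and finish by inverting $l_i$ modulo $n$, so this is the same approach with only cosmetic repackaging.
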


\begin{proof}
Note that if any of the $l_i=1$,  by Corollary \ref{movep} $\xi_\sigma$ fixes some vertex $[P_j]$ and therefore $t(\xi_\sigma)\equiv 0 \mod{n}$. Thus, we can assume $\sigma$ does not fix any $j \le n$. Without loss of generality, suppose $\gcd(l_1, n)=1$, and let $i \le n$ not fixed by $\sigma_1$. Then  $\sigma^{l_1}=(\sigma_2\sigma_3\dots\sigma_s)^{l_1}$ fixes $i$,  so $t( \xi_{\sigma^{l_1}}) \equiv 0 \pmod{n}$. By Equation (\ref{types additive}), $t( \xi_{\sigma^{l_1}}) \equiv l_1t(\xi_\sigma) \pmod{n}$, and since $\gcd(l_1,n)=1$, we get $t(\xi_\sigma)\equiv  0 \pmod{n}$.
\end{proof}

\subsection{Geometric interpretations.} We want to describe the factor $d$ from Proposition \ref{typesnrd}  in a building-theoretic way. We proceed by defining an equivalence relation between polytopes congruent to $C_\Gamma$ and show in Theorem \ref{n classes} that $d$ equals the number of equivalence classes in this relation. We offer two interpretations of the equivalence relation: an algebraic and a geometric one.

We set some notation. For any two tiled orders $\Gamma$ and $\Gamma'$, we denote by $m_{ij\ell}, [P_i]$, and respectively, $m_{ij\ell}', [P_i']$, the structural invariants and the distingushed vertices, and by $t_i \coloneqq t(P_i)$ and $t_i' \coloneqq t(P_i')$ the types of the distinguished vertices of $\Gamma$, and respectively, $\Gamma'$.

\begin{defn}
\label{class}

Let $\Gamma$ and $\Gamma'$ be two tiled orders. 
Define the following relation: $\Gamma \sim \Gamma'$ if and only if  there exists $\sigma \in S_n$ such that \begin{equation*}
\begin{array}{cllccll}
m_{ij\ell}'& = & m_{\sigma(i)\sigma(j)\sigma(\ell)}  & \text{and} &  t_i'& \equiv  &t_{\sigma(i)}  \pmod{n}
\end{array}
\end{equation*}
for all $i,j,\ell \le n$.
\end{defn}

The relation just defined is clearly an equivalence relation. When $\Gamma \sim \Gamma'$, we also write $$[\Gamma]=[(m_{ij\ell}), (t_1,t_2, \dots, t_n)] = [(m'_{ij\ell}), (t_1',t_2',\dots, t_n')]=[\Gamma'],$$ where the tuples $(m_{ij\ell})$ and $(m_{ij\ell}')$ are in lexicographical order of the indices $i,j,\ell \le n$.

By Proposition \ref{isom}, two equivalent tiled orders are isomorphic. Recall that $\GL_n(D)$ act on the set of tiled orders isomorphic to $\Gamma$  by conjugation. We get a similar action on the equivalence classes just defined by restricting ourselves to monomial matrices, which preserve the apartment $\mathcal{A}$.

\begin{lem}
\label{action lem}
Let $N \subset \GL_n(D)$ be the subgroup of monomial matrices. Then $N$ acts on the equivalence classes defined above by $\xi[\Gamma]=[\xi \Gamma \xi^{-1}]$.
In particular, if $\xi=(\bm{\pi}^{\alpha_i}\delta_{\tau(i)j}) \in N$ and $[\Gamma]=[(m_{ij\ell}), (t_i)]$, the action gives 
\begin{equation*}
\begin{array}{cll}
\xi [\Gamma]& = & [(m_{\tau(i)\tau(j)\tau(\ell)}), (t(\xi)+t_{\tau(1)}, \dots, t(\xi)+t_{\tau(n)})]
\end{array}
\end{equation*}
\end{lem}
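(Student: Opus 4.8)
The plan is to verify that the proposed formula $\xi[\Gamma]=[(m_{\tau(i)\tau(j)\tau(\ell)}), (t(\xi)+t_{\tau(1)}, \dots, t(\xi)+t_{\tau(n)})]$ is both well-defined on equivalence classes and genuinely a group action, and this should follow almost entirely from Corollary \ref{movep} together with the additivity relation for types of monomial matrices. First I would let $\xi=(\bm{\pi}^{\alpha_i}\delta_{\tau(i)j})$ and set $\Gamma'\coloneqq \xi\Gamma\xi^{-1}$. By Corollary \ref{movep}, $\Gamma'$ has structural invariants $m'_{ij\ell}=m_{\tau(i)\tau(j)\tau(\ell)}$ and distinguished vertex types $t'_i=t(\xi)+t_{\tau(i)}$, with $\xi[P_{\tau(i)}]=[P'_i]$. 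This is exactly the tuple appearing in the statement, so the displayed formula is simply the identification $[\Gamma']=[(m'_{ij\ell}),(t'_i)]$.

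The substantive point is well-definedness: I must check that if $\Gamma_1\sim\Gamma_2$ then $\xi\Gamma_1\xi^{-1}\sim\xi\Gamma_2\xi^{-1}$, so that the map on classes does not depend on the chosen representative. Suppose $\Gamma_1\sim\Gamma_2$ via some $\rho\in S_n$, meaning $m^{(2)}_{ij\ell}=m^{(1)}_{\rho(i)\rho(j)\rho(\ell)}$ and $t^{(2)}_i\equiv t^{(1)}_{\rho(i)}\pmod n$. Applying Corollary \ref{movep} to each of $\Gamma_1,\Gamma_2$ under conjugation by $\xi$, the conjugates acquire structural invariants indexed through $\tau$ and vertex types shifted by $t(\xi)$. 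A direct check shows the same permutation $\rho$ witnesses $\xi\Gamma_1\xi^{-1}\sim\xi\Gamma_2\xi^{-1}$: the structural-invariant condition holds because composing the index permutations $\tau$ and $\rho$ is associative in $S_n$, and the type condition holds because the common shift $t(\xi)$ cancels when comparing $t(\xi)+t^{(2)}_{\tau(i)}$ against $t(\xi)+t^{(1)}_{\rho\tau(i)}$.

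Next I would confirm the axioms of a group action. The identity matrix $\xi=I_n$ has $\tau=e$ and $t(I_n)\equiv 0$, so $I_n[\Gamma]=[\Gamma]$. For compatibility, given a second monomial matrix $\xi'=(\bm{\pi}^{\alpha'_i}\delta_{\tau'(i)j})$, the product $\xi'\xi$ is again monomial with underlying permutation $\tau'\tau$, and by Equation (\ref{types additive}) (or directly by Equation (\ref{def types})) one has $t(\xi'\xi)\equiv t(\xi')+t(\xi)\pmod n$. Composing the two applications of Corollary \ref{movep} then yields structural invariants $m_{\tau'\tau(i)\,\tau'\tau(j)\,\tau'\tau(\ell)}$ and types $t(\xi')+t(\xi)+t_{\tau'\tau(i)}$, which matches $(\xi'\xi)[\Gamma]$; hence $\xi'(\xi[\Gamma])=(\xi'\xi)[\Gamma]$.

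The main obstacle, such as it is, lies in the bookkeeping of nested index permutations: because conjugation permutes the distinguished vertices by $\tau$ while the equivalence relation permutes them by $\rho$, one must track the order of composition carefully and be consistent about whether $\tau$ acts before or after $\rho$. Everything else is a mechanical consequence of Corollary \ref{movep} and the additivity of type, so the only place an error could creep in is a transposed composition $\rho\tau$ versus $\tau\rho$; I would fix the convention at the outset and verify it against the identity and compatibility checks to ensure consistency.
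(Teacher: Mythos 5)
Your approach is the same as the paper's: apply Corollary \ref{movep} to get the displayed formula, then check well-definedness by tracking how the witnessing permutation transforms under conjugation. There is one concrete slip in the middle step, and it is exactly the one you flagged at the end: the permutation witnessing $\xi\Gamma_1\xi^{-1}\sim\xi\Gamma_2\xi^{-1}$ is \emph{not} $\rho$ itself but the conjugate $\tau^{-1}\rho\tau$. Indeed, the conjugated types are $\widetilde{t}^{(2)}_i = t(\xi)+t^{(2)}_{\tau(i)} = t(\xi)+t^{(1)}_{\rho\tau(i)}$, while a witness $\epsilon$ must satisfy $\widetilde{t}^{(2)}_i \equiv \widetilde{t}^{(1)}_{\epsilon(i)} = t(\xi)+t^{(1)}_{\tau\epsilon(i)}$, forcing $\tau\epsilon=\rho\tau$; taking $\epsilon=\rho$ would require $\rho$ and $\tau$ to commute. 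The identity you actually verify (comparing $t(\xi)+t^{(2)}_{\tau(i)}$ with $t(\xi)+t^{(1)}_{\rho\tau(i)}$) is precisely what shows $\epsilon=\tau^{-1}\rho\tau$ works, and the same bookkeeping handles the structural invariants, so the proof goes through once you name the witness correctly --- this is exactly the permutation the paper's proof exhibits.
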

\begin{proof} 
Since $\xi$ conjugates $\Gamma$, it satisfies the rules of group actions. We need to show the action is well-defined.

Let $\Gamma, \Gamma'$ be tiled orders with structural invariants and types $m_{ij\ell}, t_i$ and $m'_{ij\ell}, t'_i$, such that $[\Gamma]=[\Gamma']$. Let $\xi=(\bm{\pi}^{\alpha_i}\delta_{\tau(i)j}) \in N$ be a monomial matrix, and consider $\widetilde{\Gamma}=\xi \Gamma\xi^{-1}$ and $\widetilde{\Gamma}'=\xi \Gamma'\xi^{-1}$ to be tiled orders with structural invariants  $\widetilde{m}_{ij\ell}, \widetilde{t}_i$ and $\widetilde{m}'_{ij\ell}, \widetilde{t}'_i$. We want to show there exists $\epsilon \in S_n$ such that 
\begin{equation*}
\begin{array}{cllccll}
\widetilde{m}_{ij\ell}'& = & \widetilde{m}_{\epsilon(i)\epsilon(j)\epsilon(\ell)} & \text{and} & \widetilde{t}'_i& \equiv  &\widetilde{t}_{\epsilon(i)} \pmod{n}
\end{array}
\end{equation*}

By Corollary \ref{movep}, given  $\widetilde{\Gamma}=\xi \Gamma\xi^{-1}$ and $\widetilde{\Gamma}'=\xi \Gamma'\xi^{-1}$, we have
\begin{equation*}
\begin{array}{cllccll}
\widetilde{m}_{ij\ell}& = & m_{\tau(i)\tau(j)\tau(\ell)} & \text{and} & \widetilde{t}_i& \equiv  &t(\xi)+t_{\tau(i)} \pmod{n}.\\
\widetilde{m}'_{ij\ell}& = & m'_{\tau(i)\tau(j)\tau(\ell)} & \text{and}  & \widetilde{t}_i'& \equiv  &t(\xi)+t'_{\tau(i)} \pmod{n}.
\end{array}
\end{equation*}
for all $i,j,\ell \le n$.

Since $[\Gamma]=[\Gamma']$, there also exists $\sigma \in S_n$ such that
\begin{equation*}
\begin{array}{cllccll}
m_{ij\ell}'& = & m_{\sigma(i)\sigma(j)\sigma(\ell)}  & \text{and} &  t_i'& \equiv  &t_{\sigma(i)}  \pmod{n}
\end{array}
\end{equation*}
for all $i,j,\ell \le n$. Our claim follows from taking $\epsilon=\tau^{-1}\sigma\tau$.

Therefore $\xi[\Gamma]=[\xi \Gamma^{-1} \xi^{-1}]$, and $[\xi \Gamma \xi^{-1}]=[(m_{\tau(i)\tau(j)\tau(\ell)}), (t(\xi)+t_{\tau(1)}, \dots, t(\xi)+t_{\tau(n)})]$ follows from Corollary \ref{movep}.
\end{proof}

Since equivalent tiled orders are isomorphic, the equivalence relation partitions  convex polytopes congruent to $C_\Gamma$ in the apartment $\mathcal{A}$  into distinct classes. Given that monomial matrices also act on $\mathcal{A}$, we would like a geometric interpretation of these equivalence classes.

\begin{example} 
\label{trivial normalizer}
Let $\Gamma$, $\Gamma'$ and $\Gamma''$ have exponent matrices  \[M_{\Gamma}=\left( \begin{array}{ccc}
0 & 1 &2 \\
0 & 0 & 1\\
0 & 1& 0\end{array} \right) \qquad M_{\Gamma'}=\left( \begin{array}{ccc}
0 & -1 &-1 \\
3 & 0 & 1\\
2 & 1& 0\end{array} \right)  \qquad M_{\Gamma''}=\left( \begin{array}{ccc}
0 & 0 &2 \\
1 & 0 & 2\\
0 & 0& 0\end{array} \right).\]  Since $m_{ijj}=0,$ and $ m_{iji}=m_{ij\ell}+m_{ji\ell}$, we can restrict ourselves to computing the invariants $m_{ij\ell}$ for $ i\ne j \ne \ell \ne i$. Then
\begin{center}
$[\Gamma]=[(m_{123}, m_{132}, m_{213}, m_{231}, m_{312}, m_{321}), (t_1, t_2, t_3)]=[(0,2,1,1,0,1),(0,2,0)]$

$[\Gamma']=[(m_{123}', m_{132}', m_{213}', m_{231}', m_{312}', m_{321}'), (t_1', t_2', t_3')]=[(1,1,1,0,0,2),(2,0,0)].$

$[\Gamma'']=[(m_{123}'', m_{132}'', m_{213}'', m_{231}'', m_{312}'', m_{321}''), (t_1'', t_2'', t_3'')]=[(0,2,1,1,0,1),(1,0,1)].$
\end{center}

Note that for $\sigma=(123)$ we have $m_{ij\ell}'=m_{\sigma(i)\sigma(j)\sigma(\ell)}$ and $t_i'=t_{\sigma(i)}$, so $[\Gamma]= [\Gamma']$. At the same time, there is no $\tau \in S_3$ connecting the structural invariants and types of distinguished vertices of $\Gamma''$ with those of $\Gamma$ or $\Gamma'$, so $[\Gamma]=[\Gamma']\neq [\Gamma'']$. 

In Figure \ref{reflectclass}, $C_\Gamma$ is the polytope in blue, $C_{\Gamma'}$  the polytope in green, and $C_{\Gamma''}$ polytope in red,  all three being congruent. Note that if we reflect $C_\Gamma$ first with respect to $x_1-x_3=0$ and then with respect to $x_1-x_2=-1$, we obtain $C_{\Gamma'}$. We call $C_\Gamma$ and $C_{\Gamma'}$ \textit{reflection equivalent}, since reflections give equivalence relations. However, no product of reflections can send $C_\Gamma$ to  $C_{\Gamma''}$, and they are not reflection equivalent.

\end{example}

We proceed by identifying the link between the equivalence relation defined above and the geometric criterion of reflection equivalence.

\begin{figure}[h]
\caption{}
\label{reflectclass}

\begin{center}

\begin{tikzpicture}[scale=0.8, every node/.style={transform shape}]

\draw[fill=blue!50] (8,-3.5)--(9,-5.25)--(5,-5.25) -- (6,-3.5) -- (8,-3.5);
\draw[fill=green!50] (9,1.75)--(11,-1.75)--(9,-1.75) -- (8,0) -- (9, 1.75);
\draw[fill=purple!50] (7,-1.75)--(8,-3.5)--(4,-3.5) -- (5,-1.75) -- (7, -1.75);

\draw (0,0) -- +(1,1.75) -- (2,0) -- +(1,1.75)-- (4,0) -- +(1,1.75)-- (6,0) -- +(1,1.75) -- (8,0) -- +(1,1.75) -- (10,0) -- +(1,1.75) -- (12,0) -- +(1,1.75) -- (14,0);

\draw (0,1.75) -- (14, 1.75);

\draw (0,0) -- (14, 0);

\draw (0,0) -- (1,-1.75) -- (2,0) -- (3,-1.75)-- (4,0) -- (5,-1.75)-- (6,0) -- (7,-1.75) -- (8,0) -- (9,-1.75) -- (10,0) -- (11,-1.75) -- (12,0) -- (13,-1.75) -- (14,0);

\draw (0,-1.75) -- (14, -1.75);

\draw (0,-3.5) -- +(1,1.75) -- (2,-3.5) -- +(1,1.75)-- (4,-3.5) -- +(1,1.75)-- (6,-3.5) -- +(1,1.75) -- (8,-3.5) -- +(1,1.75) -- (10,-3.5) -- +(1,1.75) -- (12,-3.5) -- +(1,1.75) -- (14,-3.5);

\draw (0,-3.5) -- (14, -3.5);

\draw (0,-3.5) -- (1,-5.25) -- (2,-3.5) -- (3,-5.25)-- (4,-3.5) -- (5,-5.25)-- (6,-3.5) -- (7,-5.25) -- (8,-3.5) -- (9,-5.25) -- (10,-3.5) -- (11,-5.25) -- (12,-3.5) -- (13,-5.25) -- (14,-3.5);

\draw (0,-5.25) -- (14, -5.25);

\node[above right] at (1,1.75)  {\fontsize{7}{5}\selectfont$ \,\, [0,3,-2]$};
\node[above right] at (3,1.75)   {\fontsize{7}{5}\selectfont$ \,\,  [0,3,-1]$};
\node[above right] at (5,1.75)  {\fontsize{7}{5}\selectfont$ \,\,  [0,3,0]$};
\node[above right] at (7,1.75)  {\fontsize{7}{5}\selectfont$  \,\, [0,3,1]$};
\node[above right] at (9,1.75)  {\fontsize{7}{5}\selectfont$\,\,  [0,3,2]$};
\node[above right] at (11,1.75)  {\fontsize{7}{5}\selectfont$\,\,  [0,3,3]$};

\node[above right] at (0,0)  {\fontsize{7}{5}\selectfont$ \,\, [0,2,-3]$};
\node[above right] at (2,0)  {\fontsize{7}{5}\selectfont$ \,\,  [0,2,-2]$};
\node[above right] at (4,0)  {\fontsize{7}{5}\selectfont$ \,\,  [0,2,-1]$};
\node[above right] at (6,0)  {\fontsize{7}{5}\selectfont$  \,\, [0,2,0]$};
\node[above right] at (8,0)  {\fontsize{7}{5}\selectfont$\,\,  [0,2,1]$};
\node[above right] at (10,0)  {\fontsize{7}{5}\selectfont$\,\,  [0,2,2]$};
\node[above right] at (12,0)  {\fontsize{7}{5}\selectfont$\,\,  [0,2,3]$};

\node[above right] at (1,-1.75)  {\fontsize{7}{5}\selectfont$\,\,  [0,1,-3]$};
\node[above right] at (3,-1.75)  {\fontsize{7}{5}\selectfont$\,\,  [0,1,-2]$};
\node[above right] at (5,-1.75)  {\fontsize{7}{5}\selectfont$\,\,  [0,1,-1]$};
\node[above right] at (7,-1.75)  {\fontsize{7}{5}\selectfont$\,\,  [0,1,0]$};
\node[above right] at (9,-1.75)  {\fontsize{7}{5}\selectfont$\,\,  [0,1,1]$};
\node[above right] at (11,-1.75)  {\fontsize{7}{5}\selectfont$\,\,  [0,1,2]$};

\node[above right] at (0,-3.5)  {\fontsize{7}{5}\selectfont$ \,\, [0,0,-4]$};
\node[above right] at (2,-3.5)  {\fontsize{7}{5}\selectfont$ \,\,  [0,0,-3]$};
\node[above right] at (4,-3.5)  {\fontsize{7}{5}\selectfont$ \,\,  [0,0,-2]$};
\node[above right] at (6,-3.5)  {\fontsize{7}{5}\selectfont$  \,\, [0,0,-1]$};
\node[above right] at (8,-3.5)  {\fontsize{7}{5}\selectfont$\,\,  [0,0,0]$};
\node[above right] at (10,-3.5)  {\fontsize{7}{5}\selectfont$\,\,  [0,0,1]$};
\node[above right] at (12,-3.5)  {\fontsize{7}{5}\selectfont$\,\,  [0,0,2]$};

\node[above right] at (1,-5.25)  {\fontsize{7}{5}\selectfont$\,\,  [0,-1,-4]$};
\node[above right] at (3,-5.25)  {\fontsize{7}{5}\selectfont$\,\,  [0,-1,-3]$};
\node[above right] at (5,-5.25)  {\fontsize{7}{5}\selectfont$\,\,  [0,-1,-2]$};
\node[above right] at (7,-5.25)  {\fontsize{7}{5}\selectfont$\,\,  [0,-1,-1]$};
\node[above right] at (9,-5.25)  {\fontsize{7}{5}\selectfont$\,\,  [0,-1,0]$};
\node[above right] at (11,-5.25)  {\fontsize{7}{5}\selectfont$\,\,  [0,-1,1]$};

\end{tikzpicture}
\end{center}

\end{figure}

\begin{prop}
\label{equivalence reflections}
Let $\Gamma$ and $\Gamma'$ be two isomorphic tiled orders whose convex polytopes $C_\Gamma$ and $C_{\Gamma'}$ are in $\mathcal{A}$. Then $[\Gamma]=[\Gamma']$ are in the same equivalence class from Definition \ref{class} if and only if $C_\Gamma$ and $C_{\Gamma'}$ are reflection equivalent.
\end{prop}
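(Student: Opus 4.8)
The plan is to translate both notions—membership in the same class of Definition \ref{class} and reflection equivalence of polytopes—into a single condition on a conjugating monomial matrix, namely that it have type $0$. The two workhorses are Corollary \ref{movep}, which records how conjugation by a monomial matrix $\xi=(\bm{\pi}^{\alpha_i}\delta_{\sigma(i)j})$ transforms the structural invariants and the types of distinguished vertices, and Lemma \ref{type0}, which identifies type-$0$ monomial matrices with products of reflections on $\mathcal{A}$.

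I would first treat the backward direction. Suppose $C_\Gamma$ and $C_{\Gamma'}$ are reflection equivalent. Then a product of reflections carries $C_\Gamma$ to $C_{\Gamma'}$; by Lemma \ref{type0} this is realized by a monomial matrix $\xi=(\bm{\pi}^{\alpha_i}\delta_{\sigma(i)j})$ of type $0$, and since the distinguished vertices determine the order, $\xi\Gamma\xi^{-1}=\Gamma'$. Applying Corollary \ref{movep} reads off $m'_{ij\ell}=m_{\sigma(i)\sigma(j)\sigma(\ell)}$ and $t'_i\equiv t(\xi)+t_{\sigma(i)}\equiv t_{\sigma(i)}\pmod{n}$, the last congruence because $t(\xi)\equiv 0$. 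These are exactly the conditions of Definition \ref{class}, so $[\Gamma]=[\Gamma']$.

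For the forward direction, suppose $[\Gamma]=[\Gamma']$, so there is $\sigma\in S_n$ with $m'_{ij\ell}=m_{\sigma(i)\sigma(j)\sigma(\ell)}$ and $t'_i\equiv t_{\sigma(i)}\pmod{n}$. The construction in the proof of Proposition \ref{isom} (recalled in Corollary \ref{translation}) produces, for this same $\sigma$, a monomial matrix $\xi=(\bm{\pi}^{\alpha_i}\delta_{\sigma(i)j})$ with $\xi\Gamma\xi^{-1}=\Gamma'$; here the exponents are forced by $\alpha_i-\alpha_j+\mu_{\sigma(i)\sigma(j)}=\mu'_{ij}$, a system solvable precisely because the structural invariants match (the discrepancy $\mu'_{ij}-\mu_{\sigma(i)\sigma(j)}$ is a coboundary exactly when $m'_{ij\ell}=m_{\sigma(i)\sigma(j)\sigma(\ell)}$). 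Corollary \ref{movep} then gives $t'_i\equiv t(\xi)+t_{\sigma(i)}\pmod{n}$, and comparing with the type clause $t'_i\equiv t_{\sigma(i)}$ forces $t(\xi)\equiv 0\pmod{n}$. By Lemma \ref{type0} the matrix $\xi$ acts as a product of reflections, so $C_\Gamma$ and $C_{\Gamma'}$ are reflection equivalent.

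I expect the main obstacle to be the bookkeeping around the conjugating matrix: confirming that the isomorphism $\Gamma\cong\Gamma'$ can always be realized by a monomial matrix with precisely the permutation $\sigma$ supplied by Definition \ref{class} (rather than some other element of the coset of admissible permutations), and checking that its type modulo $n$ is well defined independently of the global additive ambiguity in the $\alpha_i$—a shift $\alpha_i\mapsto\alpha_i+c$ changes $t(\xi)$ by $nc\equiv 0\pmod{n}$, so the type is unambiguous. Once this is pinned down, the two implications are symmetric applications of Corollary \ref{movep} and Lemma \ref{type0}, and the type-matching clause of Definition \ref{class} appears as exactly the algebraic shadow of the geometric demand that the connecting transformation be a product of reflections rather than a general apartment symmetry with a nontrivial translational (type) part.
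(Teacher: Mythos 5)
Your proof is correct and takes essentially the same route as the paper's: both directions reduce to exhibiting a monomial matrix conjugating $\Gamma$ to $\Gamma'$ and reading off its type via Corollary \ref{movep} and Lemma \ref{type0}, with the type-matching clause of Definition \ref{class} forcing type $0$. The only organizational difference is in the forward direction, where you build the conjugator $\xi$ in one step from the solvability of $\alpha_i-\alpha_j=\mu'_{ij}-\mu_{\sigma(i)\sigma(j)}$ (valid exactly because the structural invariants match), whereas the paper factors the same transformation as a permutation matrix followed by a translation, passing through an intermediate order $\Gamma''$ and Corollary \ref{translation}; the content is identical.
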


\begin{proof}
Suppose $[\Gamma]=[\Gamma']$, then there exists $\sigma \in S_n$ such that
\begin{equation*}
\begin{array}{cllccll}
m_{ij\ell}'& = & m_{\sigma(i)\sigma(j)\sigma(\ell)}  & \text{and} &  t_i'& \equiv  &t_{\sigma(i)}  \pmod{n}
\end{array}
\end{equation*}
for all $i,j,\ell \le n$. Let  $\eta=(\delta_{\sigma^{-1}(i)j})$  and  $\Gamma''=\eta\Gamma'\eta^{-1}$. Since $t(\xi)=0$, by Lemma \ref{type0}  $\eta$ will act on the apartment by a product of reflections and therefore $C_\Gamma''$ and $C_{\Gamma'}$ are reflection equivalent. 

By Corollary \ref{movep} and the equation above, the structural invariants and types of $\Gamma''$ are 
\[ m''_{ij\ell}= m'_{\sigma^{-1}(i)\sigma^{-1}(j)\sigma^{-1}(\ell)} = m_{ij\ell} \quad \text{and} \quad t''_i \equiv t'_{\sigma^{-1}(i)} \equiv t_i \pmod{n}.\]

Therefore $[\Gamma'']=[\Gamma]$. Moreover, since $\Gamma''$ and $\Gamma'$ have equal structural invariants, according to  Corollary \ref{translation} $C_{\Gamma''}$ must be a translation of $C_{\Gamma}$ by some diagonal matrix. Since $t_i \equiv t_i'' \pmod{n}$, the type of such a diagonal matrix must be zero, and  by Lemma \ref{type0} the matrix will act on the apartment by a product of reflections. This implies that  $C_{\Gamma}$ and $C_{\Gamma''}$ are reflection equivalent, and by transitivity so are $C_\Gamma$ and $C_{\Gamma'}$.

Now we prove the converse, and assume that $C_\Gamma$ and $C_{\Gamma'}$ are reflection equivalent.  Let the product of reflections sending $C_\Gamma$ to $C_{\Gamma'}$ correspond to the monomial matrix $\xi=(\bm{\pi}^{\beta_i}\delta_{\sigma(i)j})$ with $t(\xi)\equiv 0 \pmod{n}$ such that $\Gamma'=\xi\Gamma\xi^{-1}$. By Lemma \ref{action lem},  $$[\xi \Gamma \xi^{-1}] = [(m_{\sigma(i)\sigma(j)\sigma(\ell)}), (t_{\sigma(1)}, \dots, t_{\sigma(n)})]$$ and we are done.
\end{proof}

Therefore, the equivalence classes described above partition the convex polytopes congruent to $C_{\Gamma}$ into classes of reflection equivalent convex polytopes. We continue by investigating the number of such equivalence classes.

\begin{lem}
\label{class description}
Let $\Gamma$ be a tiled order with  tuple $(m_{ij\ell})$ of structural invariants in lexicographical order, and ordered tuple of types of distinguished vertices $(t_1,t_2, \dots, t_n)$. For any $s \in \Z$, let $\xi_s \coloneqq \text{diag}(\bm{\pi}^s,1,\dots,1)$ and $\Gamma_s \coloneqq \xi_s \Gamma \xi_s^{-1}$. Then there are at most $n$   reflection classes of polytopes congruent to $C_\Gamma$,  corresponding to the classes of orders 
\begin{center}
$[\Gamma]= [\Gamma_0]=[(m_{ij\ell}), (t_1,t_2, \dots, t_n)] \qquad \qquad \qquad \qquad \qquad \qquad \qquad \,$

$\,\,[\Gamma_1]=[(m_{ij\ell}), (t_1+1, t_2+1, \dots, t_n+1)]  \qquad \qquad \qquad \quad$

$\,\, [\Gamma_2]=[(m_{ij\ell}), (t_1+2, t_2+2, \dots, t_n+2)] \qquad \qquad \qquad \quad$

$\vdots \quad $

 $[\Gamma_{n-1}]=[(m_{ij\ell}), (t_1+n-1, t_2+n-1, \dots, t_n+n-1)].\qquad$
\end{center}
\end{lem}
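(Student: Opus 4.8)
The plan is to first pin down the class $[\Gamma_s]$ explicitly, and then to show that these $n$ classes already exhaust everything congruent to $C_\Gamma$. For the first step, observe that $\xi_s = \text{diag}(\bm{\pi}^s, 1, \dots, 1)$ is a monomial matrix whose underlying permutation is the identity and whose type is $t(\xi_s) \equiv s \pmod{n}$. Applying Lemma \ref{action lem} with $\tau = \id$ therefore gives $[\Gamma_s] = [\xi_s \Gamma \xi_s^{-1}] = [(m_{ij\ell}), (t_1 + s, \dots, t_n + s)]$, so the structural invariants are unchanged and every type is shifted by $s$. Since types are only recorded modulo $n$, the class $[\Gamma_s]$ depends only on $s \bmod n$; hence among all the $\Gamma_s$ there are at most the $n$ classes $[\Gamma_0], [\Gamma_1], \dots, [\Gamma_{n-1}]$ listed in the statement.

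For the second step I would show that an arbitrary tiled order $\Gamma'$ whose polytope $C_{\Gamma'} \subset \mathcal{A}$ is congruent to $C_\Gamma$ lies in one of these classes. Congruence means $m'_{ij\ell} = m_{\sigma(i)\sigma(j)\sigma(\ell)}$ for some $\sigma \in S_n$, so by Proposition \ref{isom} the two orders are isomorphic, and the isomorphism is realized by a monomial matrix $\xi = (\bm{\pi}^{\alpha_i}\delta_{\sigma(i)j})$ with $\Gamma' = \xi \Gamma \xi^{-1}$. Set $d \colonequals t(\xi)$ and peel off the translation it encodes by writing $\xi = \xi_d \eta$ with $\eta \colonequals \xi_d^{-1}\xi$. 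Then $\eta$ is again monomial, its underlying permutation is still $\sigma$ (as $\xi_d$ is diagonal), and by additivity of type $t(\eta) \equiv t(\xi_d^{-1}) + t(\xi) \equiv -d + d \equiv 0 \pmod{n}$, so by Lemma \ref{type0} $\eta$ acts on $\mathcal{A}$ by a product of reflections.

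It remains to read off the class of $\Gamma'$. Applying Lemma \ref{action lem} to the type-$0$ matrix $\eta$ gives $[\eta \Gamma \eta^{-1}] = [(m_{\sigma(i)\sigma(j)\sigma(\ell)}), (t_{\sigma(1)}, \dots, t_{\sigma(n)})]$, which is exactly $[\Gamma]$ by Definition \ref{class} (with permutation $\sigma$). Using well-definedness of the action from Lemma \ref{action lem} once more, I conclude
\[
[\Gamma'] = [\xi_d(\eta\Gamma\eta^{-1})\xi_d^{-1}] = \xi_d[\eta\Gamma\eta^{-1}] = \xi_d[\Gamma] = [\Gamma_d],
\]
so by Proposition \ref{equivalence reflections} the polytope $C_{\Gamma'}$ is reflection equivalent to $C_{\Gamma_d}$ (equivalently $C_{\Gamma_{d \bmod n}}$). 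This shows every polytope congruent to $C_\Gamma$ falls into one of $[\Gamma_0], \dots, [\Gamma_{n-1}]$, giving at most $n$ reflection classes.

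The step I expect to be the main obstacle is the decomposition $\xi = \xi_d \eta$ together with the claim that $\eta$ has type $0$: this is where the translation-by-$\xi_d$ computation of the first paragraph and the reflection characterization of type-$0$ matrices from Lemma \ref{type0} must be matched up, so that the residual factor $\eta\Gamma\eta^{-1}$ lands in the same reflection class as $\Gamma$. Everything else is bookkeeping with Lemma \ref{action lem} and the multiplicativity of the Dieudonn\'{e} determinant underlying the additivity of type.
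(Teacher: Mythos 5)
Your proof is correct and follows essentially the same route as the paper: both compute $[\Gamma_s]$ via Lemma \ref{action lem} and then reduce an arbitrary $\Gamma'$ with polytope in $\mathcal{A}$ to some $[\Gamma_{t(\xi)}]$ by splitting the monomial conjugating matrix into a type-$0$ (reflection) factor and a diagonal translation factor. The only cosmetic difference is that you factor $\xi = \xi_d\eta$ with $\eta$ a type-$0$ monomial matrix, whereas the paper peels off a permutation matrix on the left so that the residual factor is diagonal; this is immaterial, and your appeal to the monomial realization of the isomorphism (via the proof of Proposition \ref{isom}, i.e.\ the result of Fujita--Yoshimura) matches the paper's citation.
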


\begin{proof}
The fact that $[\Gamma_s]=[(m_{ij\ell}), (t_1+s, t_2+s, \dots, t_n+s)]$ follows from Lemma \ref{action lem}.
We show that any order $\Gamma'$ isomorphic to $\Gamma$, and with convex polytope  $C_{\Gamma'}$ in $\mathcal{A}$, belongs to one of the classes enumerated above.  If $\Gamma' \cong \Gamma$, the main theorem in \cite{tiledisom} gives a monomial matrix $\xi=(\pi^{\beta_i}\delta_{\tau(i)j}), \, \tau\in S_n$ such that  $\Gamma'=\xi\Gamma\xi^{-1}$. Let $\eta=(\delta_{\tau^{-1}(i)j})$. By Lemma \ref{type0} and Proposition \ref{equivalence reflections}, $[\Gamma']=[\eta \Gamma' \eta^{-1}]$. Let  $\Gamma''\coloneqq  \eta \Gamma' \eta^{-1}= (\eta \xi) \Gamma (\eta \xi)^{-1}$. Since the product $\eta \xi$ is a diagonal matrix with $t(\eta \xi) \equiv t(\xi) \pmod{n}$,  by Lemma \ref{action lem}  the equivalence class $[\Gamma'']$ is determined by the data   $$m''_{ij\ell}=m_{ij\ell} \quad \text{and} \quad t_i'' \equiv t_i+t(\xi) \pmod{n} \quad \text{for all} \quad i,j,\ell \le n.$$
Therefore, $[\Gamma']=[\Gamma'']$ corresponds to the  reflection class given by $[\Gamma_{t(\xi)}]$. 
\end{proof}

\begin{example} Let $\Gamma$ be the tiled order with exponent matrix $M_\Gamma=\left( \begin{array}{ccc}
0 & 1 &1 \\
0 & 0 & 1\\
0 & 1& 0\end{array} \right)$, its convex polytope denoted in Figure \ref{reflection classes} by the blue diamond. The types of its distinguished vertices are $(0,2,2)$. We see other two reflection classes given by $\Gamma_1$ with $M_{\Gamma_1}=\left( \begin{array}{ccc}
0 & 0 &1 \\
1 & 0 & 2\\
0 & 0& 0\end{array} \right)$, types $(1,0,0)$ and convex polytope in yellow, and $\Gamma_2$ with $M_{\Gamma_2}=\left( \begin{array}{ccc}
0 & 0 &0 \\
1 & 0 & 1\\
1 & 1& 0\end{array} \right)$,  types $(2,1,1)$ and convex polytope in green. Note that the three polytopes are in distinct reflection classes.

\end{example}

\begin{figure}[h]
\caption{}
\label{reflection classes}

\smallskip

\begin{tikzpicture}[scale=0.8, every node/.style={transform shape}]

\draw[fill=blue!50] (8,-3.5)--(9,-5.25)--(7,-5.25) -- (6,-3.5) -- (8,-3.5);
\draw[fill=yellow!50] (8,-3.5)--(6,-3.5)--(5,-1.75) -- (7,-1.75) -- (8,-3.5);
\draw[fill=green!50] (8,-3.5)--(7,-1.75)--(9,-1.75) -- (10,-3.5) -- (8,-3.5);

\draw (0,0) -- +(1,1.75) -- (2,0) -- +(1,1.75)-- (4,0) -- +(1,1.75)-- (6,0) -- +(1,1.75) -- (8,0) -- +(1,1.75) -- (10,0) -- +(1,1.75) -- (12,0) -- +(1,1.75) -- (14,0);

\draw (0,1.75) -- (14, 1.75);

\draw (0,0) -- (14, 0);

\draw (0,0) -- (1,-1.75) -- (2,0) -- (3,-1.75)-- (4,0) -- (5,-1.75)-- (6,0) -- (7,-1.75) -- (8,0) -- (9,-1.75) -- (10,0) -- (11,-1.75) -- (12,0) -- (13,-1.75) -- (14,0);

\draw (0,-1.75) -- (14, -1.75);

\draw (0,-3.5) -- +(1,1.75) -- (2,-3.5) -- +(1,1.75)-- (4,-3.5) -- +(1,1.75)-- (6,-3.5) -- +(1,1.75) -- (8,-3.5) -- +(1,1.75) -- (10,-3.5) -- +(1,1.75) -- (12,-3.5) -- +(1,1.75) -- (14,-3.5);

\draw (0,-3.5) -- (14, -3.5);

\draw (0,-3.5) -- (1,-5.25) -- (2,-3.5) -- (3,-5.25)-- (4,-3.5) -- (5,-5.25)-- (6,-3.5) -- (7,-5.25) -- (8,-3.5) -- (9,-5.25) -- (10,-3.5) -- (11,-5.25) -- (12,-3.5) -- (13,-5.25) -- (14,-3.5);

\draw (0,-5.25) -- (14, -5.25);

\draw (0,-7) -- +(1,1.75) -- (2,-7) -- +(1,1.75)-- (4,-7) -- +(1,1.75)-- (6,-7) -- +(1,1.75) -- (8,-7) -- +(1,1.75) -- (10,-7) -- +(1,1.75) -- (12,-7) -- +(1,1.75) -- (14,-7);

\draw (0,-7) -- (14, -7);

\node[above right] at (0,0)  {\fontsize{7}{5}\selectfont$ \,\, [0,2,-3]$};
\node[above right] at (2,0)  {\fontsize{7}{5}\selectfont$ \,\,  [0,2,-2]$};
\node[above right] at (4,0)  {\fontsize{7}{5}\selectfont$ \,\,  [0,2,-1]$};
\node[above right] at (6,0)  {\fontsize{7}{5}\selectfont$  \,\, [0,2,0]$};
\node[above right] at (8,0)  {\fontsize{7}{5}\selectfont$\,\,  [0,2,1]$};
\node[above right] at (10,0)  {\fontsize{7}{5}\selectfont$\,\,  [0,2,2]$};
\node[above right] at (12,0)  {\fontsize{7}{5}\selectfont$\,\,  [0,2,3]$};

\node[above right] at (1,-1.75)  {\fontsize{7}{5}\selectfont$\,\,  [0,1,-3]$};
\node[above right] at (3,-1.75)  {\fontsize{7}{5}\selectfont$\,\,  [0,1,-2]$};
\node[above right] at (5,-1.75)  {\fontsize{7}{5}\selectfont$\,\,  [0,1,-1]$};
\node[above right] at (7,-1.75)  {\fontsize{7}{5}\selectfont$\,\,  [0,1,0]$};
\node[above right] at (9,-1.75)  {\fontsize{7}{5}\selectfont$\,\,  [0,1,1]$};
\node[above right] at (11,-1.75)  {\fontsize{7}{5}\selectfont$\,\,  [0,1,2]$};

\node[above right] at (0,-3.5)  {\fontsize{7}{5}\selectfont$ \,\, [0,0,-4]$};
\node[above right] at (2,-3.5)  {\fontsize{7}{5}\selectfont$ \,\,  [0,0,-3]$};
\node[above right] at (4,-3.5)  {\fontsize{7}{5}\selectfont$ \,\,  [0,0,-2]$};
\node[above right] at (6,-3.5)  {\fontsize{7}{5}\selectfont$  \,\, [0,0,-1]$};
\node[above right] at (8,-3.5)  {\fontsize{7}{5}\selectfont$\,\,  [0,0,0]$};
\node[above right] at (10,-3.5)  {\fontsize{7}{5}\selectfont$\,\,  [0,0,1]$};
\node[above right] at (12,-3.5)  {\fontsize{7}{5}\selectfont$\,\,  [0,0,2]$};

\node[above right] at (1,-5.25)  {\fontsize{7}{5}\selectfont$\,\,  [0,-1,-4]$};
\node[above right] at (3,-5.25)  {\fontsize{7}{5}\selectfont$\,\,  [0,-1,-3]$};
\node[above right] at (5,-5.25)  {\fontsize{7}{5}\selectfont$\,\,  [0,-1,-2]$};
\node[above right] at (7,-5.25)  {\fontsize{7}{5}\selectfont$\,\,  [0,-1,-1]$};
\node[above right] at (9,-5.25)  {\fontsize{7}{5}\selectfont$\,\,  [0,-1,0]$};
\node[above right] at (11,-5.25)  {\fontsize{7}{5}\selectfont$\,\,  [0,-1,1]$};

\node[above right] at (0,-7)  {\fontsize{7}{5}\selectfont$ \,\, [0,-2,-5]$};
\node[above right] at (2,-7)  {\fontsize{7}{5}\selectfont$ \,\,  [0,-2,-4]$};
\node[above right] at (4,-7)  {\fontsize{7}{5}\selectfont$ \,\,  [0,-2,-3]$};
\node[above right] at (6,-7)  {\fontsize{7}{5}\selectfont$  \,\, [0,-2,-2]$};
\node[above right] at (8,-7)  {\fontsize{7}{5}\selectfont$\,\,  [0,-2,-1]$};
\node[above right] at (10,-7)  {\fontsize{7}{5}\selectfont$\,\,  [0,-2,0]$};
\node[above right] at (12,-7)  {\fontsize{7}{5}\selectfont$\,\,  [0,-2,1]$};

\end{tikzpicture}

\end{figure}

Therefore, there are at most $n$ equivalence classes of tiled orders isomorphic to $\Gamma$. However, not all of the equivalence classes in Lemma \ref{class description} are always distinct.

\begin{lem}
\label{norm class}

\begin{enumerate}
\item Let $\xi \in \mathcal{N}(\Gamma)$ be a monomial matrix. Then $[(m_{ij\ell}), (t_1, \dots, t_n)]=[(m_{ij\ell}), (t_1+\ell \cdot t(\xi), \dots, t_n+\ell \cdot t(\xi))]$ for any $\ell \in \Z$.

\item  If $[\Gamma_i]=[\Gamma_{i+r}]$, then $[\Gamma_i]=[\Gamma_{i+\ell \cdot r}]$ for all $\ell \in \Z$. 
\end{enumerate}
\end{lem}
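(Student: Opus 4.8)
The plan is to run both parts through the action of monomial matrices on reflection classes recorded in Lemma \ref{action lem}, combined with the explicit description of the classes $[\Gamma_s]$ from Lemma \ref{class description}.

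For part (1), the first step is to observe that \emph{every} monomial matrix sends $[\Gamma]$ to the reflection class indexed by its type. Writing $\eta=(\bm{\pi}^{\alpha_i}\delta_{\tau(i)j})$, Lemma \ref{action lem} gives
\[
\eta[\Gamma]=[(m_{\tau(i)\tau(j)\tau(\ell)}),\,(t(\eta)+t_{\tau(1)},\dots,t(\eta)+t_{\tau(n)})].
\]
Comparing this with $[\Gamma_{t(\eta)}]=[(m_{ij\ell}),\,(t_1+t(\eta),\dots,t_n+t(\eta))]$ via the permutation $\sigma=\tau$ in Definition \ref{class} shows $\eta[\Gamma]=[\Gamma_{t(\eta)}]$: the structural tuples match because $\sigma=\tau$, and the type tuples match because $t(\eta)+t_{\tau(i)}=t_{\tau(i)}+t(\eta)$. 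I would then specialize to $\eta=\xi^\ell$. Since $\mathcal{N}(\Gamma)$ is a group and products and inverses of monomial matrices are monomial, $\xi^\ell$ is a monomial matrix lying in $\mathcal{N}(\Gamma)$, so $\xi^\ell\Gamma(\xi^\ell)^{-1}=\Gamma$ and hence $\xi^\ell[\Gamma]=[\Gamma]$. Combining the two identities yields $[\Gamma]=[\Gamma_{t(\xi^\ell)}]$, and since the type is additive under multiplication (the Dieudonn\'e determinant is multiplicative and $v_D$ is a valuation), $t(\xi^\ell)\equiv \ell\,t(\xi)\pmod n$. Reading off $[\Gamma_{\ell t(\xi)}]$ from Lemma \ref{class description} is exactly the asserted equality.

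For part (2), I would exploit the diagonal matrices $\xi_s=\text{diag}(\bm{\pi}^s,1,\dots,1)$ defining the classes. Since $\xi_s\xi_j=\xi_{s+j}$, one has $\xi_s\Gamma_j\xi_s^{-1}=\Gamma_{j+s}$, so by Lemma \ref{action lem} the well-defined action of $\xi_s$ on classes is the index shift $\xi_s[\Gamma_j]=[\Gamma_{j+s}]$. Applying the well-defined map $\xi_s$ to both sides of the hypothesis $[\Gamma_i]=[\Gamma_{i+r}]$ gives $[\Gamma_{i+s}]=[\Gamma_{i+s+r}]$ for every $s\in\Z$; as $s$ ranges over $\Z$ this says $[\Gamma_t]=[\Gamma_{t+r}]$ for all $t$, i.e.\ the assignment $t\mapsto[\Gamma_t]$ is periodic of period $r$. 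Chaining this identity upward and its downward version $[\Gamma_{t-r}]=[\Gamma_t]$ starting from $t=i$ gives $[\Gamma_i]=[\Gamma_{i+\ell r}]$ for all $\ell\in\Z$ by an immediate induction.

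The computations are routine once the structural results of the section are available; the only point needing care is the bookkeeping in the first step of part (1)—checking that the permutation realizing $\eta[\Gamma]=[\Gamma_{t(\eta)}]$ is precisely the permutation $\tau$ underlying $\eta$, so that the type shift $t(\eta)$ is distributed uniformly across all distinguished vertices. The substantive content is then that membership in $\mathcal{N}(\Gamma)$ forces a normalizing monomial matrix to fix its own reflection class, while the shifts fixing a given class $[\Gamma_i]$ are automatically closed under the additive structure of $\Z$.
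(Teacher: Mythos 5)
Your proof is correct. Part (1) is essentially the paper's argument in a slightly tidier form: the paper decomposes $\xi=\xi_d\cdot p_\sigma$ into a diagonal and a permutation matrix and invokes Lemma \ref{type0} together with Proposition \ref{equivalence reflections} to discard the permutation part, then ``repeats the process''; you instead verify directly from Lemma \ref{action lem} and Definition \ref{class} (matching via $\sigma=\tau$) that \emph{any} monomial $\eta$ satisfies $\eta[\Gamma]=[\Gamma_{t(\eta)}]$, and then apply this to $\xi^\ell$ using additivity of the type. Both are sound; your observation $\eta[\Gamma]=[\Gamma_{t(\eta)}]$ is a clean packaging of what the paper proves piecemeal. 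Part (2) is where you genuinely diverge: the paper uses Proposition \ref{equivalence reflections} to produce a type-$0$ monomial matrix $\eta$ with $\eta\Gamma_i\eta^{-1}=\Gamma_{i+r}$, manufactures the element $\xi_{i+r}\xi_i^{-1}\eta^{-1}\in\mathcal{N}(\Gamma_{i+r})$ of type $r$, and then feeds it into part (1); you instead use only the translation identity $\xi_s\Gamma_j\xi_s^{-1}=\Gamma_{j+s}$ and the well-definedness of the action from Lemma \ref{action lem} to upgrade the single coincidence $[\Gamma_i]=[\Gamma_{i+r}]$ to $r$-periodicity of $t\mapsto[\Gamma_t]$, then chain. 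Your route is more elementary (it needs neither Proposition \ref{equivalence reflections} nor part (1)), at the cost of not exhibiting the normalizing element of type $r$ that the paper's version makes explicit and later reuses in the proof of Theorem \ref{n classes}.
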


\begin{proof}

(1) Write $\xi=\xi_d\cdot p_\sigma$ as a product of a diagonal matrix $\xi_d$ and a permutation matrix $p_\sigma$. Then  $t(p_\sigma)\equiv 0 \pmod{n}$ and $t(\xi_d)\equiv t(\xi) \pmod{n}$. Since $\Gamma=\xi \Gamma \xi^{-1}$ and $p_\sigma$ acts on $\mathcal{A}$ as a product of reflections, by Lemma \ref{action lem} and Lemma \ref{type0} we have 
\[ [\Gamma]= [\xi \Gamma \xi^{-1}]=\xi_d [p_\sigma \Gamma p_{\sigma}^{-1}]=\xi_d[\Gamma]=[\xi_d \Gamma \xi_d^{-1}], \]
which in terms of invariants gives $$[(m_{ij\ell}), (t_1, \dots, t_n)]=[(m_{ij\ell}), (t_1+t(\xi), \dots, t_n+t(\xi))].$$

The claim follows since $\xi^\ell \in \mathcal{N}(\Gamma)$ for any $\ell \in \Z$ and we can repeat the process.

(2) By Proposition \ref{equivalence reflections},  $[\Gamma_i]=[\Gamma_{i+r}]$ if and only if $C_{\Gamma_i}$ and $C_{\Gamma_{i+r}}$ are reflection equivalent, so by Lemma \ref{type0} there exists  a monomial matrix $\eta$ of type $0$ such that $\eta \Gamma_i \eta^{-1}=\Gamma_{i+r}$. With the notation as in Lemma \ref{class description}, note that $ \xi_{i+r} \xi_i^{-1} \eta^{-1} \in \mathcal{N}(\Gamma_{i+r})$, so by (a) we have $[\Gamma_i]=[\Gamma_{i+r}]=[\Gamma_{i+ \ell \cdot r}]$ for all $ \ell \in \Z$. 
\end{proof}

\begin{thm}
\label{n classes}
Let $\Gamma$ be a tiled order, and $\Gamma_i$ and their corresponding classes as defined in Lemma \ref{class description}. Then the following are equivalent:

\begin{enumerate}[(a)]

\item There are $d$ distinct equivalence classes.

\item $d$ is the smallest among $\{1, 2, \dots, n\}$ such that  $[\Gamma_s]=[\Gamma_t]$ whenever $s \equiv t \pmod{d}$.

\item $d$  is the smallest among $\{1, 2, \dots, n\}$ such that  $[\Gamma_0]=[\Gamma_d]$.

\item $\nr(\mathcal{N}(\Gamma))=(k^\times)^d R^\times$.
\end{enumerate}

\end{thm}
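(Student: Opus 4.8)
The plan is to show that conditions (a)--(d) all pin down the same integer $d$, by analysing the translation action of the diagonal matrices $\xi_s$ of Lemma \ref{class description} on the reflection classes. Since $\xi_s=\xi_1^{\,s}$ and $t(\xi_1)\equiv 1\pmod n$, Lemma \ref{action lem} gives $\xi_1[\Gamma_s]=[\Gamma_{s+1}]$, while $\xi_1^{\,n}$ has type $0$ and hence acts trivially; thus $[\Gamma_s]$ depends only on $s\bmod n$ and $s\mapsto[\Gamma_s]$ is the orbit of $[\Gamma_0]$ under a $\Z/n\Z$-action.

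First I would prove the structural fact $\{r\in\Z:[\Gamma_0]=[\Gamma_r]\}=d\Z$, where $d$ is the least positive integer with $[\Gamma_0]=[\Gamma_d]$, and that $d\mid n$. The inclusion $\supseteq$ is Lemma \ref{norm class}(2). For $\subseteq$, given $[\Gamma_0]=[\Gamma_r]$ write $r=qd+s$ with $0\le s<d$; since $[\Gamma_{qd}]=[\Gamma_0]=[\Gamma_r]=[\Gamma_{qd+s}]$, applying the action of $\xi_{-qd}$ (Lemma \ref{action lem}) yields $[\Gamma_0]=[\Gamma_s]$, so $s=0$ by minimality. As $[\Gamma_0]=[\Gamma_n]$ we get $d\mid n$. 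This gives (b)$\Leftrightarrow$(c) at once: $[\Gamma_s]=[\Gamma_t]$ iff $[\Gamma_0]=[\Gamma_{t-s}]$ iff $d\mid(t-s)$, so $d$ is exactly the least common period. For (a)$\Leftrightarrow$(c), the classes $[\Gamma_0],\dots,[\Gamma_{d-1}]$ are pairwise distinct (an equality $[\Gamma_i]=[\Gamma_j]$ with $0\le i<j<d$ would give $[\Gamma_0]=[\Gamma_{j-i}]$, contradicting minimality) and, reducing indices mod $d$, they absorb all the classes produced in Lemma \ref{class description}; hence there are precisely $d$ of them.

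It remains to identify this $d$ with the exponent in (d), and the bridge I would use is the identity
\[
\{\,r\bmod n:[\Gamma_0]=[\Gamma_r]\,\}=\{\,t(\xi):\xi\in\mathcal{N}(\Gamma)\ \text{monomial}\,\}.
\]
The inclusion $\supseteq$ is immediate from Lemma \ref{norm class}(1), which gives $[\Gamma_0]=[\Gamma_{t(\xi)}]$ for any monomial $\xi\in\mathcal{N}(\Gamma)$. For $\subseteq$, if $[\Gamma_0]=[\Gamma_r]$ then Proposition \ref{equivalence reflections} makes $C_\Gamma$ and $C_{\Gamma_r}=C_{\xi_r\Gamma\xi_r^{-1}}$ reflection equivalent, so Lemma \ref{type0} supplies a type-$0$ monomial $\eta$ with $\eta\Gamma\eta^{-1}=\xi_r\Gamma\xi_r^{-1}$; then $\xi_r^{-1}\eta\in\mathcal{N}(\Gamma)$ is monomial of type $-r$ (types being additive under products), and its inverse realizes type $r$. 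Finally, by Proposition \ref{thmants} every normalizer element lies in a coset $\xi_\sigma D^\times\Gamma^\times$, and the diagonal copy of $D^\times$ and the units $\Gamma^\times$ contribute type $0$; hence the right-hand set equals $\{t(\xi_\sigma):\xi_\sigma\in\widetilde{H}\}$, which by Proposition \ref{typesnrd} is the subgroup $d'\Z/n\Z$ with $\nr(\mathcal{N}(\Gamma))=(k^\times)^{d'}R^\times$. The displayed identity now reads $d\Z/n\Z=d'\Z/n\Z$, forcing $d=d'$, which is (d).

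The group-theoretic bookkeeping is routine; the step I expect to require the most care is the inclusion $\subseteq$ in the displayed identity, where the a priori purely geometric statement $[\Gamma_0]=[\Gamma_r]$ must be upgraded to an honest element of $\mathcal{N}(\Gamma)$ of the prescribed type. This is where Proposition \ref{equivalence reflections} (class equality versus reflection equivalence) and Lemma \ref{type0} (reflections versus type-$0$ monomial matrices) do the essential work, after which composing with $\xi_r$ and tracking types lands us in the normalizer.
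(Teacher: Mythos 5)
Your proposal is correct and follows essentially the same route as the paper: the periodicity of the classes $[\Gamma_s]$ comes from Lemma \ref{norm class}, and the link to $\nr(\mathcal{N}(\Gamma))$ is made by upgrading $[\Gamma_0]=[\Gamma_r]$ to a type-$r$ monomial element of the normalizer via Proposition \ref{equivalence reflections} and Lemma \ref{type0}, then invoking Propositions \ref{thmants} and \ref{typesnrd} — exactly the ingredients in the paper's proof. Your packaging of the argument as the single identity $\{r \bmod n : [\Gamma_0]=[\Gamma_r]\} = \{t(\xi) : \xi \in \mathcal{N}(\Gamma) \text{ monomial}\} = d\Z/n\Z$ is a tidier presentation of the same content, not a different method.
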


\begin{proof}

$(a) \implies (b)$ Suppose there are $d$ distinct equivalence classes. Note that always   $[\Gamma_{i+\ell \cdot n}]=[(m_{ij\ell}), (t_1+i+\ell \cdot n, t_2+i+\ell \cdot n,\dots, t_n+i+\ell \cdot n)]=[(m_{ij\ell}), (t_1+i,  \dots, t_n+i)]=[\Gamma_i]$. Then the result follows immediately if $d=1$ (then $[\Gamma_s]=[\Gamma_t]$ for all $s, t \in \Z$), or $d=n$ (then $[\Gamma_s]$ are all distinct for $0 \le s \le n-1$). 

Suppose $1<d<n$. Then there exist $ 0 \le i < j \le n-1$ such that $[\Gamma_i]=[\Gamma_j]$, and we may take $i,j$ such that $r = |j-i|$ is minimal. Lemma  \ref{norm class}  then gives $[\Gamma_i]=[\Gamma_{i+ \ell \cdot r}]$ for all $ \ell \in \Z$.  On the other hand, since $[\Gamma_i]=[\Gamma_j]$, then clearly $[\Gamma_{i+t}]=[\Gamma_{j+t}]$ for all $t \in \Z$ so again by Lemma \ref{norm class} we get  $[\Gamma_s]=[\Gamma_t]$ if $s \equiv t \pmod{r}$. Then our claim holds since $1 \le r \le n$ was chosen minimal.

$(b) \implies (c)$  Immediate. 

$(c) \implies (a)$  We clearly have $[\Gamma_i]=[\Gamma_{i+d}]$ for all $i \in \Z$, and Lemma \ref{norm class}  gives   $[\Gamma_0]=[\Gamma_{\ell \cdot d}]$ for all $ \ell \in \Z$. Therefore, there are  $d$ distinct equivalence classes.

$(c) \implies (d)$ By Proposition \ref{equivalence reflections}, we have a monomial matrix $\eta$ with $t(\eta)\equiv 0 \pmod{n}$ and $\Gamma=\eta \Gamma_d \eta^{-1}=\eta \xi_d \Gamma \xi_d^{-1} \eta^{-1}$, so $\eta \xi_d \in \mathcal{N}(\Gamma)$ has $t(\eta \xi_d) \equiv d \pmod{n}$. By Proposition \ref{typesnrd}, $(k^\times)^d R^\times \subseteq \nr(\mathcal{N}(\Gamma))$. On the other hand, if there exists a monomial matrix  $\xi_\sigma \in \mathcal{N}(\Gamma)$ with $t(\xi_\sigma) \equiv r$ with $0 <r<d$,  Lemma \ref{norm class} gives $[\Gamma_0]=[\Gamma_r]$ which contradicts that $d$ is minimal. Therefore, $(k^\times)^d R^\times = \nr(\mathcal{N}(\Gamma))$.

$(d) \implies (c)$ By  Proposition \ref{typesnrd}, $1 \le d \le n$ is minimal among the types $t(\xi_\sigma)$ for $\xi_\sigma \in \mathcal{N}(\Gamma)$, so Lemma \ref{norm class} gives $[\Gamma_0]=[\Gamma_d]$ with $d$ minimal.
\end{proof}

We would like to use our results  to compute $\nr(\mathcal{N}(\Gamma))$ for any given tiled order.  We have the following algorithm:

%
%
%

\begin{algorithm}
\caption{Algorithm for determining the number of reflection classes for $\Gamma \subset M_n(D)$}\label{alg1}
\begin{algorithmic}[1]
\Procedure{NumberOfReflectionClasses}{$\Gamma$}							
\State Compute the structural invariants  $m_{ij\ell}$ and types of distinguished vertices $t_i$.
\State Compute a subgroup $ G \subseteq S_n$ containing $H$ as in Proposition \ref{thmants} using steps (1)-(4) in the Algorithm in \cite{ants}  \Comment{This step is optional, but polynomial in time and it can reduce the time needed. Otherwise, we can take $G=S_n$.}
\State Compute the divisors $d_i|n$  in increasing order. 
\State Let $d \coloneqq d_1=1$.
\Repeat                                                                                        \Comment{For each divisor $d_i$, we will check whether $[\Gamma_0]=[\Gamma_{d_i}]$.}
 \State Find all the permutations $\sigma \in G$ that decompose into products of disjoint cycles with length not coprime to $r$, such that  $t_j+d_i \equiv t_{\sigma(j)} \pmod{n}$ for all $1 \le j \le n$.
\State  For each $\sigma$ found in the previous step, check whether $m_{ij\ell}=m_{\sigma(i)\sigma(j)\sigma(\ell)}$ for all $i,j,\ell \le n$. 
\If{there exists at least one such $\sigma$}  \textbf{break.}  \Comment{We either found the divisor $d$ equal to the number of reflection classes,}
\Else  \quad  $d=d_{i+1}$  
\EndIf
\Until{$d=n$.} \Comment{or we exhausted all divisors.}
\State \textbf{return} $d$
\EndProcedure
\end{algorithmic}
\end{algorithm}

We illustrate Algorithm \ref{alg1} with a couple of examples.

\begin{example}
\label{p1}
Consider the tiled order $\Gamma$  with exponent matrix $ M_{\Gamma} = \begin{pmatrix} 0&1&1&2\\
2&0&2&2\\
2&1&0&1\\
1&1&0&0\end{pmatrix}$. 

The types are given by the tuple $(1,3,3,1)$, and we omit to write down the structural invariants due to space constraints. We skip the optional step, and let $G=S_4$. The proper divisors of $4$ are $d_1=1$ and $d_2=2$.   The types are given by the tuple $(1,3,3,1)$.

Let $d=1$.  We want $\sigma \in G$ for which  $t_j+1 \equiv t_{\sigma(j)} \pmod{4}$ for all $j \le 4$. However, since none of the vertices has type $2$ and $t_1+1=2$, there is no such $\sigma$.

Let $d=2$.  We check for permutations in $G$ decomposing into disjoint cycles of length not coprime to $4$ for which  $t_j+2 \equiv t_{\sigma(j)} \pmod{4}$ for all $j \le 4$. The eligible permutations are $(1243), (12)(34), (1342)$ and $(13)(42)$.  Note that $m_{123}=2 \neq m_{241}=1$, so $(1243)$ does not apply. However, we can check that $m_{ij\ell}=m_{\sigma(i)\sigma(j)\sigma(\ell)}$ holds for all $i,j,\ell \le 4$ when $\sigma=(12)(34)$. Therefore, $d=2$, and $\nr(\mathcal{N}(\Gamma))=(k^\times)^2R^\times$.

Note that if we computed $G$ in step $3$, we would get $G=\{(), (12), (34),  (12)(34)\}$, and we would only have to consider the permutation $(12)(34)$.
\end{example}

\begin{example}  As discussed in \cite[page 76]{brown}, any two chambers can be connected by reflections. For example, the polytopes in Figure \ref{general} are chambers. One example of an order whose polytope is a chamber is $\Gamma$ with upper triangular exponent matrix $M_\Gamma= (a_{ij})$ where $a_{ij}=1$ if $i < j$, and $a_{ij}=0$ otherwise. One can  check using Algorithm \ref{alg1} that there is only one reflection class.
\end{example}

\begin{proof}[Proof of correctness of Algorithm \ref{alg1}]
By Theorem \ref{n classes}, there are $d$ equivalence classes if and only if $[\Gamma_0]=[\Gamma_d]$ with $1 \le d \le n$ minimal. Note that if $\Gamma=\Gamma_0$ has structural invariants $m_{ij\ell}$ and types $t_i$, by Corollary \ref{movep}, $\Gamma_d$ has structural invariants $m_{ij\ell}$ and types $t_i+d_i$. But then $[\Gamma_0]=[\Gamma_d]$ if and only if there exists $\sigma \in S_n$ for which $m_{ij\ell}=m_{\sigma(i)\sigma(j)\sigma(\ell)}$ and $t_i+d\equiv t_{\sigma(\ell)} \mod{n}$, conditions which precisely correspond to the repeated step.
\end{proof}

As we could see in Example \ref{p1}, even for small $n$ the task of finding the number of reflection classes can be quite involved, and the more information we can get about $\mathcal{N}(\Gamma)$, such as the group $G$, the better. Fortunately, the algorithm above reduces to a very simple case when $n=p$ is prime.

\begin{algorithm}
\caption{Algorithm for determining the number of reflection classes for $\Gamma \subset M_p(D)$}\label{alg2}
\begin{algorithmic}[1]
\Procedure{NumberOfReflectionClassesPrime}{$\Gamma$}							
\State Compute the structural invariants  $m_{ij\ell}$ and types of distinguished vertices $t_i$. 
\If{all the types $t_i$ are distinct}
\State Find the unique $p$-cycle $\sigma \in S_p$ such that $t_j+1 \equiv t_{\sigma(j)} \pmod{p}$ for all $1 \le j \le p$.
\If{ $m_{ij\ell}=m_{\sigma(i)\sigma(j)\sigma(\ell)}$ for all $i,j,\ell \le p$} {\textbf{return} $1$}
\Else \quad {\textbf{return} $p$}
\EndIf
\Else \quad {\textbf{return} $p$}
\EndIf
\EndProcedure
\end{algorithmic}
\end{algorithm}

\begin{proof}[Proof of correctness of Algorithm \ref{alg2}]
First, we need to confirm that if the types $t_i$ are not all distinct, then there are $p$ distinct reflection classes. By Theorem \ref{n classes} and Proposition \ref{typesnrd}, the number of equivalence classes divides $p$, so it's either $1$ or $p$. Suppose there was only one equivalence class, then by Proposition \ref{typesnrd} there exists $\xi_\sigma \in \mathcal{N}(\Gamma)$ a monomial matrix with $t(\xi)\equiv 1 \pmod{p}$. By Corollay \ref{movep}, $t_i\equiv t(\xi)+t_{\sigma(i)}$ for all $i$, which means all vertices have distinct types, which contradicts our assumption.

Next, suppose all types are distinct. By Lemma \ref{disjoint cycles}, we only need to consider $p$-cycles. Then there is clearly a unique $\sigma \in S_p$ such that $t_j+1 \equiv t_{\sigma(j)} \pmod{p}$ for all $1 \le j \le p$.  If $m_{ij\ell}=m_{\sigma(i)\sigma(j)\sigma(\ell)}$ for all $i,j,\ell \le p$, then $[\Gamma_0]=[\Gamma_1]$ and by Theorem \ref{n classes} there is only one reflection class. Otherwise, there must be $p$ such classes.
\end{proof}

\section{Type numbers}
\label{typs}

 Recall our notation. Let  $K$ be  a number field with ring of integers $\mathcal{O}_K$ and  set of places $\text{Pl}(K)$. Let  $A$ be a central simple algebra over $K$  such that either the degree of $A$ is $n \ge 3$, or $n=2$ and $A$ is not a totally definite quaternion algebra, so strong approximation holds in $A$. Denote by  $\Omega \subset \text{Pl}(K)$ the finite set of real places of $K$ ramifying in $A$. Consider $\Gamma$ an $\mathcal{O}_K$-order in $A$, such that $\Gamma_\nu$ is tiled at each finite place $\nu \in \text{Pl}(K)$. Note that at all but finitely many primes, $\Gamma_\nu$ is maximal. We denote by $K_\nu$ and $\mathcal{O}_\nu$ the completions of $K$, and respectively $\mathcal{O}_K$,  at a place $\nu \in \text{Pl}(K)$. When $\nu$ is finite, $K_\nu$ is an extension of a $p$-adic field, when $\nu$ is infinite and real $K_\nu=\R$, and when $\nu$ is infinite and complex $K_\nu=\C$.  Let $A_\nu:=K_\nu \otimes_K A$ and $\Gamma_\nu := \mathcal{O}_\nu \otimes_R \Gamma$. By Artin-Wedderburn, $A_\nu \cong M_{n_\nu}(D_\nu)$, where $D_\nu$ is a central division algebra of degree $n/n_\nu$ over $K_\nu$.    If $\nu$ is an infinite place, we set $\mathcal{O}_\nu:=K_\nu$ and  $\Gamma_\nu:=A_\nu$.

The type number $G(\Gamma)$ of $\Gamma$ is the number of isomorphisms classes of orders locally isomorphic to $\Gamma$, or equivalently, the number of double cosets $A^\times\backslash J_A/\prod'_{\nu} \mathcal{N}(\Gamma_\nu)$. We will refer to this set of cosets   as the \textit{genus} of $\Gamma$. Since strong approximation holds in $A$,   we get the bijection from Equation (\ref{types})
\[
  A^\times\backslash J_A/\sideset{}{'}\prod_{\nu} \mathcal{N}(\Gamma_\nu) \leftrightarrow  J_K/K^\times \nr(\sideset{}{'}\prod_{\nu} \mathcal{N}(\Gamma_\nu)).
\]


We recall the options for the idelic normalizer of the completion $\Gamma_\nu$. Denote by $S\coloneqq S_\infty - \Omega$. Then $A \cong M_{n/2}(\mathbb{H})$ for $\nu \in \Omega$ and $A \cong M_n(K_\nu)$ for $\nu \in S$, which will determine $\nr(\mathcal{N}(\Gamma_\nu))$ at all places $\nu \in S_\infty$. Now suppose $\nu$ is finite. Then for all but finitely many places we have $A_\nu \cong M_n(K_\nu)$ and $\Gamma_\nu\cong M_n(\mathcal{O}_\nu)$, in which case  by Corollary  \ref{max order norm},  $\nr(\mathcal{N}(\Gamma_\nu))=(K_\nu^\times)^n \mathcal{O}_\nu^\times$. At the finitely many remaining cases, we have $A_\nu \cong M_{n_\nu}(D_\nu)$ and $\Gamma_\nu$ a tiled order in $A_\nu$, and Proposition \ref{typesnrd} gives $\nr(\mathcal{N}(\Gamma_\nu)) =(K_\nu^\times)^{d_\nu} \mathcal{O}_\nu^\times$, where $d_\nu|n_\nu$ and $n_\nu|n$. Note that it is possible that $d_\nu=n_\nu=n$, but we can put such cases together with the case above, and let $T$ be  the remaining set of finite places such that $\nr(\mathcal{N}(\Gamma_\nu))=(K_\nu^\times)^{d_\nu}\mathcal{O}_\nu^\times$ where $d_\nu \ne n$. To summarize, we have
\[ \nr(\mathcal{N}(\Gamma_\nu)) = \begin{cases} 
      \R_+^\times &   \nu \in \Omega\\
      K_\nu^\times &  \nu \in S \\
(K_\nu^\times)^{d_\nu} \mathcal{O}_\nu^\times & \nu \in T \text{ with } d_\nu \ne n, d_\nu | n,\\
(K_\nu^\times)^n\mathcal{O}_\nu^\times &  \nu \not\in S_\infty \cup T.
      \\
   \end{cases}
\]

Therefore, we want the size of the  idelic quotient 
\begin{equation}
\label{general tiled}
J_K/K^\times \prod_{\nu \in \Omega} \R_+^\times \prod_{\nu \in S} K_\nu^\times \prod_{\nu \in T} (K_\nu^\times)^{d_\nu}\mathcal{O}_\nu^\times \sideset{}{'}\prod_{\nu \not \in S_\infty \cup T} (K_\nu^\times)^n\mathcal{O}_\nu^\times.
\end{equation}

We first bound $G(\Gamma)$ above. All maximal orders in $A$ are locally isomorphic, so the type number of all maximal orders are equal; we denote this number by $G_{max}$. 

\begin{prop}
\label{divides tower}
Let $A$ be a central simple algebra of degree $n \ge 2$  over a number field $K$ such that $A$ is not a totally definite quaternion algebra. Let $\mathcal{O}_K$ be the  ring of integers of $K$,  $\Omega$ the set of real places ramifying in $A$, and $\Cl_\Omega(K)$ the ray class group for $\Omega$. Let $G_{max}$ be the type number of maximal orders in $A$. Given an everywhere locally tiled order $\Gamma$ in $A$, we have $$G(\Gamma) \le G_{max} \le \, \#\Cl_\Omega(K)/\Cl_\Omega(K)^n,$$ where in particular $G(\Gamma)| G_{max}$ and $G_{max} | \, \#\Cl_\Omega(K)/\Cl_\Omega(K)^n$.
\end{prop}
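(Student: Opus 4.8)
The plan is to realize $G(\Gamma)$, $G_{max}$, and $\#(\Cl_\Omega(K)/\Cl_\Omega(K)^n)$ as indices of a nested chain of subgroups of $J_K$ and to read off both divisibilities from the inclusions. Set $U_\Gamma \coloneqq K^\times \nr(\prod'_{\nu}\mathcal{N}(\Gamma_\nu))$, $U_{max} \coloneqq K^\times \nr(\prod'_{\nu}\mathcal{N}(\Lambda_\nu))$ for a maximal order $\Lambda$, and $U_\Omega \coloneqq K^\times J_{K,S,\Omega}$. By Equation (\ref{types}) we have $G(\Gamma) = \#(J_K/U_\Gamma)$ and $G_{max} = \#(J_K/U_{max})$, while by definition $\Cl_\Omega(K) = J_K/U_\Omega$; all these quotients are finite. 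The mechanism I will use repeatedly is that an inclusion $U \subseteq U'$ of finite-index subgroups of $J_K$ induces a surjection $J_K/U \twoheadrightarrow J_K/U'$, whence $\#(J_K/U') \mid \#(J_K/U)$.

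To get $G(\Gamma) \mid G_{max}$ I would show $U_{max} \subseteq U_\Gamma$ by comparing the local factors $\nr(\mathcal{N}(\Lambda_\nu))$ and $\nr(\mathcal{N}(\Gamma_\nu))$ place by place. At the infinite places the two coincide (both equal $\R_+^\times$ for $\nu \in \Omega$ and $K_\nu^\times$ for $\nu \in S$), since there $\Gamma_\nu = \Lambda_\nu = A_\nu$. At a finite place Corollary \ref{max order norm} gives $\nr(\mathcal{N}(\Lambda_\nu)) = (K_\nu^\times)^{n_\nu}\mathcal{O}_\nu^\times$, whereas Proposition \ref{typesnrd} gives $\nr(\mathcal{N}(\Gamma_\nu)) = (K_\nu^\times)^{d_\nu}\mathcal{O}_\nu^\times$ with $d_\nu \mid n_\nu$. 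Since $d_\nu \mid n_\nu$ forces $(K_\nu^\times)^{n_\nu} \subseteq (K_\nu^\times)^{d_\nu}$, the inclusion holds at every place, hence $U_{max} \subseteq U_\Gamma$ and the divisibility $G(\Gamma) \mid G_{max}$ follows.

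For the second divisibility I would first note $U_\Omega \subseteq U_{max}$: the two groups have the same infinite factors, and at each finite place $U_\Omega$ contributes $\mathcal{O}_\nu^\times \subseteq (K_\nu^\times)^{n_\nu}\mathcal{O}_\nu^\times$, the factor of $U_{max}$. This yields a surjection $\Cl_\Omega(K) \twoheadrightarrow G_{max}$, and it remains to check that it annihilates $\Cl_\Omega(K)^n$; equivalently, that $J_K^n \subseteq U_{max}$. Given $a = (a_\nu)_\nu \in J_K$, at each finite place $a_\nu^n = (a_\nu^{\,n/n_\nu})^{n_\nu} \in (K_\nu^\times)^{n_\nu} \subseteq (K_\nu^\times)^{n_\nu}\mathcal{O}_\nu^\times$ (using $n_\nu \mid n$), and at each $\nu \in S$ trivially $a_\nu^n \in K_\nu^\times$. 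The one remaining case is a real ramified place $\nu \in \Omega$, where I must verify $a_\nu^n \in \R_+^\times$.

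This last case is the crux of the upper bound and the only place parity enters. Because $A$ ramifies at the real place $\nu$, we have $A_\nu \cong M_{n/2}(\mathbb{H})$, which forces $n$ to be even; hence $a_\nu^n = (a_\nu^{\,n/2})^2 > 0$, so indeed $a_\nu^n \in \R_+^\times$. (If $\Omega = \emptyset$ this case does not arise.) With $J_K^n \subseteq U_{max}$ in hand, $\Cl_\Omega(K)^n$ lies in the kernel of $\Cl_\Omega(K) \twoheadrightarrow G_{max}$, so this map descends to a surjection $\Cl_\Omega(K)/\Cl_\Omega(K)^n \twoheadrightarrow G_{max}$, giving $G_{max} \mid \#(\Cl_\Omega(K)/\Cl_\Omega(K)^n)$. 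The two displayed inequalities are then immediate, since a positive integer dividing another cannot exceed it. The main obstacle throughout is bookkeeping the local factors correctly — in particular remembering that even a maximal order need not be split at the finitely many ramified finite places — together with the parity observation at the places of $\Omega$.
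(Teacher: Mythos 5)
Your proof is correct and follows essentially the same route as the paper: all three quantities are realized as orders of quotients of $J_K$ by a nested chain of subgroups containing $K^\times J_K^n J_{K,S,\Omega}$, with the divisibilities read off from the induced surjections and the inclusions coming from $d_\nu \mid n_\nu \mid n$. Your explicit verification that $n$ is even whenever $\Omega \neq \emptyset$ (so that $J_K^n \subseteq U_{max}$ at real ramified places) is a point the paper leaves implicit, but it is the same argument.
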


\begin{proof}
Recall that $\Cl_\Omega(K) \cong J_K/K^\times J_{K, S, \Omega}$, where $S=S_\infty-\Omega$. Then $$\Cl_\Omega(K)^n \cong  J_K^n/(K^\times  J_{K,S,\Omega}\cap J_K^n) \cong J_K^nK^\times J_{K,S,\Omega}/K^\times J_{K,S,\Omega},$$ and therefore

\[ \displaystyle \begin{array}{*3{>{\displaystyle}ccl}}
\Cl_\Omega(K)/\Cl_\Omega(K)^n & \displaystyle \cong & J_K/  J_K^nK^\times J_{K,S,\Omega}  \\
						& \displaystyle \cong &  J_K/ \prod_{\nu \in \Omega} \R_+^\times \prod_{\nu \in S} K_\nu^\times  \prod_{\nu \text{ finite}} (K_\nu^\times)^n (\mathcal{O}_\nu^\times)^n.
\end{array}\]


The genus of a maximal order $\Lambda$ in $A$ will correspond to the quotient
 $$J_K/K^\times \prod_{\nu \in \Omega} \R_+^\times \prod_{\nu \in S} K_\nu^\times \prod_{\nu \in T} (K_\nu^\times)^{n_\nu}\mathcal{O}_\nu^\times \sideset{}{'}\prod_{\nu \not \in S_\infty \cup T} (K_\nu^\times)^n\mathcal{O}_\nu^\times=J_K/ \prod_{\nu \in T}(\dots, 1,  (K_\nu^\times)^{n_\nu}, 1, \dots) K^\times J_K^nJ_{K,S,\Omega}$$ where $T$ is the set of finite primes where $A_\nu \cong M_{n_\nu}(D_\nu)$ for which $n_\nu \ne n$.  

At the same time, the genus of an arbitrary order that is everywhere locally tiled is given by  $$J_K/K^\times \prod_{\nu \in \Omega} \R_+^\times \prod_{\nu \in S} K_\nu^\times \prod_{\nu \in T} (K_\nu^\times)^{d_\nu}\mathcal{O}_\nu^\times \sideset{}{'}\prod_{\nu \not \in S_\infty \cup T} (K_\nu^\times)^n\mathcal{O}_\nu^\times= J_K/\prod_{\nu \in T}(\dots, 1, (K_\nu^\times)^{d_\nu},1,\dots) K^\times J_K^n J_{K, S, \Omega}.$$


Then our claim follows from the subgroup inclusions  $$J_K^n  \le   \prod_{\nu \in T}(\dots,1,(K_\nu^\times)^{n_\nu},1,\dots) J_K^n  \le \prod_{\nu \in T} (\dots, 1, (K_\nu^\times)^{d_\nu}, 1, \dots) J_K^n. \qedhere$$
\end{proof}

%


By Proposition \ref{divides tower}, the genus of $\Gamma$ corresponds to a subgroup of $\Cl_\Omega(K)/\Cl_\Omega(K)^n$, which we would like to identify. Consider an everywhere  locally tiled order $\Gamma \subset A$. For each place $\nu \in T=\{\nu \text{ finite } : \, \nr(\mathcal{N}(\Gamma_\nu))= (K_\nu^\times)^{d_\nu} \mathcal{O}_\nu^\times, d_\nu \ne n\}$ we have an associated ideal class $[\p_\nu]$ in $\Cl_\Omega(K)$. By the Chebotarev density theorem, each ideal class in $\Cl_\Omega(K)$ contains infinitely many prime ideals, so for each prime $\p_\nu$ with $\nu \in T$ we can pick a prime $\q_\nu$  such that $[\p_\nu^{d_\nu}]=[\q_\nu]$. Let $\hat{T}=\{\q_\nu:  \nu \in T\} \cup S$. Note that $\hat{T}$ is a finite set.

\begin{thm}
\label{type tiled}
 Let $A$ be a central simple algebra of degree $n \ge 2$  over a number field $K$, such that either $ n \ge 3$, or $A$ is not a totally definite quaternion algebra. Let $\Omega$ be the set of real ramified primes in $A$, and $S=S_\infty -\Omega$. Let $\Gamma$ be an everywhere  locally tiled order in $A$, with $T$ and $\hat{T}$ the sets of places and primes  defined above. Then 
\[
G(\Gamma)=\#\Cl_{\hat{T}, \Omega}(K)/\Cl_{\hat{T}, \Omega}(K)^n .
\]
\end{thm}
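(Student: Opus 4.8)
The plan is to realize both sides of the claimed identity as quotients of the single group $\Cl_\Omega(K)/\Cl_\Omega(K)^n$ by the same subgroup, and then simply compare cardinalities.

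First I would start from the bijection in Equation (\ref{types}) together with the case analysis for $\nr(\mathcal{N}(\Gamma_\nu))$ summarized just before Equation (\ref{general tiled}), which expresses $G(\Gamma)$ as the order of the idelic quotient in (\ref{general tiled}). Following the computation already carried out in the proof of Proposition \ref{divides tower}, this quotient rewrites as
\[
J_K\Big/\prod_{\nu \in T}(\dots, 1, (K_\nu^\times)^{d_\nu}, 1, \dots)\,K^\times J_K^n J_{K, S, \Omega}.
\]
Reducing first modulo $K^\times J_K^n J_{K, S, \Omega}$ recovers $\Cl_\Omega(K)/\Cl_\Omega(K)^n$ via the isomorphism displayed in Proposition \ref{divides tower}, so $G(\Gamma)$ equals the order of the further quotient by the image of $\prod_{\nu \in T}(\dots, 1, (K_\nu^\times)^{d_\nu}, 1, \dots)$.

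The crucial step is to identify this image. For each finite $\nu \in T$ the factor $\mathcal{O}_\nu^\times$ and all $n$-th powers are absorbed into $J_{K, S, \Omega}$ and $J_K^n$; since $K_\nu^\times$ is generated modulo $\mathcal{O}_\nu^\times$ by the uniformizer $\pi_\nu$, and the idele $(\dots, 1, \pi_\nu, 1, \dots)$ maps to the ideal class $[\p_\nu]$ (as recalled in the preliminaries), the image of $(\dots, 1, (K_\nu^\times)^{d_\nu}, 1, \dots)$ is precisely the cyclic subgroup generated by $[\p_\nu^{d_\nu}]=[\p_\nu]^{d_\nu}$ in $\Cl_\Omega(K)/\Cl_\Omega(K)^n$. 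Hence
\[
G(\Gamma) = \#\big((\Cl_\Omega(K)/\Cl_\Omega(K)^n)\big/\langle [\p_\nu^{d_\nu}] : \nu \in T\rangle\big).
\]

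For the right-hand side I would use the global description $\Cl_{\hat{T}, \Omega}(K) = \Cl_\Omega(K)/\langle [\q_\nu] : \nu \in T\rangle$ established in the preliminaries, noting that $\hat{T}$ contributes only the finite primes $\q_\nu$ since $S$ consists of infinite places. Writing $H = \langle [\q_\nu] : \nu \in T\rangle$, a routine application of the third isomorphism theorem gives $\Cl_{\hat{T}, \Omega}(K)/\Cl_{\hat{T}, \Omega}(K)^n \cong \Cl_\Omega(K)/(\Cl_\Omega(K)^n H)$, which is exactly the quotient of $\Cl_\Omega(K)/\Cl_\Omega(K)^n$ by the image of $\langle [\q_\nu]\rangle$. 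Finally, the defining property $[\q_\nu] = [\p_\nu^{d_\nu}]$ in $\Cl_\Omega(K)$ shows the two subgroups coincide, so the two quotients have equal order, completing the proof. The main obstacle is the bookkeeping in the identification step: one must verify carefully that, modulo $J_K^n J_{K, S, \Omega}$, the local subgroup $(K_\nu^\times)^{d_\nu}$ contributes precisely $\langle [\p_\nu]^{d_\nu}\rangle$ and nothing more, so that the passage between the idelic and the ideal-class pictures is faithful.
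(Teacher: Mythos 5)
Your proposal is correct and follows essentially the same route as the paper's proof: both reduce the idelic genus quotient from Equation (\ref{general tiled}) to $J_K/\prod_{\nu\in T}(\dots,1,(K_\nu^\times)^{d_\nu},1,\dots)K^\times J_K^n J_{K,S,\Omega}$, identify the uniformizer ideles with the classes $[\p_\nu^{d_\nu}]=[\q_\nu]$ in $\Cl_\Omega(K)$, and use the same isomorphism-theorem manipulation (the paper phrases it as computing $G/G^n$ for $G=J_K/K^\times H$, you phrase it as quotienting $\Cl_\Omega(K)$ by $\Cl_\Omega(K)^n\langle[\q_\nu]\rangle$ from both sides, but these are the same calculation). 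The bookkeeping point you flag at the end is handled exactly as you suggest, since the local factor at $\nu\in T$ is $(K_\nu^\times)^{d_\nu}\mathcal{O}_\nu^\times=\pi_\nu^{d_\nu\Z}\mathcal{O}_\nu^\times$ with $\mathcal{O}_\nu^\times$ absorbed into $J_{K,S,\Omega}$.
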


\begin{proof}
%

Let $$H= \prod_{\nu \in \Omega} \R_+^\times \prod_{\nu \in S} K_\nu^\times  \prod_{\nu \in T} (K_\nu^\times)^{d_\nu} \mathcal{O}_\nu^\times  \prod_{\nu \not\in S_\infty \cup T} \mathcal{O}_\nu^\times= \prod_{\nu \in T}(\dots,1, (K_\nu^\times)^{d_\nu},1,\dots) J_{K, S, \Omega}$$ and $G=J_K/K^\times H$. Then 
$G^n \cong  J_K^n/(K^\times  H \cap J_K^n) \cong J_K^nK^\times H/K^\times H,$ and therefore $$G/G^n \cong  J_K/  J_K^nK^\times H=J_K/\prod_{\nu \in T} (\dots,1, (K_\nu^\times)^{d_\nu},1,\dots) K^\times J_K^n J_{K, S, \Omega}.$$ We identify $G$ with a subgroup of $\Cl_\Omega(K)$ as follows.  

We have a surjective homomorphism 
\[
\Cl_\Omega(K) \cong J_K/K^\times  J_{K,S, \Omega} \rightarrow  J_K/K^\times  \prod_{\nu \in T}(\dots,1, (K_\nu^\times)^{d_\nu},1,\dots) J_{K, S, \Omega},\]
and since each $K_\nu^\times$ is generated by the uniformizer $\pi_\nu$,  we can represent the kernel of the homomorphism by  $\displaystyle \frac{\langle (\dots, 1, \pi_\nu^{d_\nu}, 1, \dots ): \nu \in T \rangle  K^\times J_{K, S, \Omega}}{K^\times  J_{K, S, \Omega}}$. Each coset $ (\dots, 1, \pi_\nu^{d_\nu}, 1, \dots ) K^\times J_{K,S,\Omega}$ corresponds to the ideal class $[\p_\nu^{d_\nu}]$, so $G \cong \Cl_\Omega(K)/\langle [\p_\nu^{d_\nu}]: \nu \in T\rangle$.  But then $[\p_\nu^{d_\nu}]=[\q_\nu]$ for each $\p_\nu$, so $\Cl_\Omega(K)/\langle [\p_\nu^{d_\nu}] : \nu \in   T\rangle =\Cl_\Omega(K)/\langle [\q_\nu]: \nu \in T\rangle = \Cl_{\hat{T}, \Omega}(K)$, and therefore $G(\Lambda)=\#  \Cl_{\hat{T}, \Omega}(K)/ \Cl_{\hat{T}, \Omega}(K)^n$.
\end{proof}

The theorem is particularly appealing when the degree of the algebra is a prime number $p \ge 3$, since then the algebra does not ramify at any infinite place and we can take $\hat{T}=T$.

\begin{cor}
Let $A$ be a central simple algebra of prime degree $p \ge 3$ over a number field $K$. Let $\Gamma$ be an everywhere  locally tiled order in $A$, with $T=\{ \nu \text{finite}: \nr(\mathcal{N}(\Gamma_\nu))=K_\nu^\times \mathcal{O}_\nu^\times\}$. Then
\[
G(\Gamma)=\#\Cl_{T}(K)/\Cl_{T}(K)^p .
\]
\end{cor}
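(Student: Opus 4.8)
The plan is to specialize Theorem \ref{type tiled} to the case $n=p$ and verify the two simplifications anticipated in the remark preceding the corollary, namely that $A$ has no real ramification and that every exponent $d_\nu$ occurring is forced to equal $1$. First I would observe that $\Omega=\emptyset$: the only nontrivial division algebra over $\R$ is $\mathbb{H}$, of degree $2$, so a real place ramifies in $A$ only when $A_\nu\cong M_{n/2}(\mathbb{H})$, which requires $n$ to be even. Since $p\ge 3$ is an odd prime, no real place of $K$ ramifies, whence $\Omega=\emptyset$. Consequently $\Cl_\Omega(K)=\Cl(K)$ and $S=S_\infty-\Omega=S_\infty$.

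The substantive step is to determine $d_\nu$ for $\nu\in T$. At each finite place $A_\nu\cong M_{n_\nu}(D_\nu)$ with $n_\nu\mid n=p$, and by Proposition \ref{typesnrd} (or Corollary \ref{max order norm} at the maximal places) we have $\nr(\mathcal{N}(\Gamma_\nu))=(K_\nu^\times)^{d_\nu}\mathcal{O}_\nu^\times$ with $d_\nu\mid n_\nu\mid p$. Thus $d_\nu\in\{1,p\}$, and the defining condition $d_\nu\ne n=p$ for membership in $T$ forces $d_\nu=1$. In particular $(K_\nu^\times)^{d_\nu}=K_\nu^\times$, so the set $T$ appearing in Theorem \ref{type tiled} coincides with the set $T=\{\nu\text{ finite}:\nr(\mathcal{N}(\Gamma_\nu))=K_\nu^\times\mathcal{O}_\nu^\times\}$ of the corollary.

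Because $d_\nu=1$, the auxiliary primes may be chosen trivially: $[\p_\nu^{d_\nu}]=[\p_\nu]$, so I may simply take $\q_\nu=\p_\nu$, with no appeal to Chebotarev density required. Hence $\hat{T}=\{\p_\nu:\nu\in T\}\cup S_\infty$, whose finite part is exactly $T$. Substituting into Theorem \ref{type tiled} and invoking the global interpretation $\Cl_{\hat{T},\Omega}(K)\cong\Cl_\Omega(K)/\langle[\p]:\p\in T\rangle$ established in the preliminaries (with $\Omega=\emptyset$), I obtain $\Cl_{\hat{T},\Omega}(K)=\Cl(K)/\langle[\p_\nu]:\nu\in T\rangle=\Cl_T(K)$, and therefore $G(\Gamma)=\#\Cl_T(K)/\Cl_T(K)^p$.

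Once $\Omega=\emptyset$ and $d_\nu=1$ are in hand, the remainder is bookkeeping in the class-group notation of the preliminaries. The only genuine content — and the step I would be most careful about — is the divisibility argument forcing $d_\nu=1$, since this is precisely where the primality of $p$ enters and is what collapses $\hat{T}$ back onto $T$.
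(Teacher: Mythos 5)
Your proposal is correct and follows essentially the same route as the paper, which derives the corollary from Theorem \ref{type tiled} via exactly the two observations you verify: $\Omega=\emptyset$ because an odd-degree algebra cannot ramify at a real place, and $d_\nu\mid n_\nu\mid p$ with $d_\nu\ne p$ forces $d_\nu=1$, so one may take $\q_\nu=\p_\nu$ and $\hat{T}=T$. The paper states this in a single sentence without proof; your write-up is a correct and complete expansion of that remark.
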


We conclude with an example where use Algorithm \ref{alg1} and Theorem \ref{type tiled} to compute the type number of a global order.
\begin{example}
We illustrate Theorem \ref{type tiled} in the case $n=4$. Let $K=\Q(a)$ where $a$ is a root of $f(x)=x^4 - 30x^2 - 1$. Then  $\Cl(K)\cong \Z/2\Z \times \Z/8\Z$ as found using the LMFDB \cite{lmfdb}. Consider the order 
$\Gamma=\begin{pmatrix} 
\mathcal{O}_K & \p_1 & \p_1 \p_2 & \p_1^2\p_2 \\
\p_1^2 & \mathcal{O}_K & \p_1^2\p_2 & \p_1^2\p_2\\
\p_1^2& \p_1 & \mathcal{O}_K & \p_1\\
\p_1 & \p_1 & \mathcal{O}_K &\mathcal{O}_K\end{pmatrix} \subseteq M_4(K),$
 where $\p_1=(5,a+2)$ and $\p_2=(7,a-2)$. Note that since $A=M_4(K)$, none of the infinite places of $K$ ramify in $A$ so $\Omega=\emptyset$ and $\Cl_\Omega(K)=\Cl(K)$. Note also that $\Gamma_\p = M_4(\mathcal{O}_\p)$ when $\p \ne \p_1, \p_2$, and both $\Gamma_{\p_1}$ and $\Gamma_{\p_2}$ are tiled. Then $\Gamma_{\p_1}$ and $\Gamma_{\p_2}$ have exponent matrices  $$ \begin{pmatrix} 0&1&1&2\\
2&0&2&2\\
2&1&0&1\\
1&1&0&0\end{pmatrix} \qquad \text{and} \qquad \begin{pmatrix} 0&0&1&1\\
0&0&1&1\\
2&1&0&1\\
1&1&0&0\end{pmatrix}.$$ In Example \ref{p1}, we have found that $\Gamma_{\p_1}$ has two reflection classes. We can use the same algorithm to see that $\Gamma_{\p_2}$ also has  2 reflection classes. Therefore, we need two  primes $\q_1$ and $\q_2$ such that $[\p_1^2]=[\q_1]$ and $[\p_2^2]=[\q_2]$. We perform the rest of the calculations  using Sage \cite{sagemath}. First, we find such primes $\q_1=(239, a+36)$ and $\q_2=(7, a^3-33a)$. Letting $T=\{\q_1, \q_2\} \cup S_\infty$,  we get $\Cl_T(K) \cong \Z/2\Z \times \Z/2\Z$, so $\Cl_T(K)/\Cl_T(K)^4 \cong \Z/2\Z \times \Z/2\Z$. Therefore, the type number $G(\Gamma)=4$.
\end{example}

\section{Acknowledgements}
This paper incorporates material from the author's Ph.D. thesis \cite{thesis}. Many thanks to my  advisor Thomas R. Shemanske,  for the very helpful conversations and detailed feedback. Thank you to John Voight for helpful suggestions on improving the proofs in the last section.

\end{document}